\newtheorem{theorem}{Theorem}[section]
\newtheorem{definition}[theorem]{Definition}
\newtheorem{example}[theorem]{Example}
\newtheorem{lemma}[theorem]{Lemma}
\newtheorem{proposition}[theorem]{Proposition}
\newtheorem{remark}[theorem]{Remark}
\newenvironment{proof}[1][Proof]{\noindent \emph{#1.} }{\hfill \ 
\rule{0.5em}{0.5em}}
\makeatletter\@addtoreset{equation}{section}\makeatother
\makeatletter\@addtoreset{figure}{section}\makeatother
\makeatletter\@addtoreset{table}{section}\makeatother
\begin{document}

\title{Range-separated  tensor format for numerical modeling of many-particle 
interaction potentials}


\author{Peter Benner \thanks{Max Planck Institute for Dynamics of Complex 
Systems, Sandtorstr.~1, D-39106 Magdeburg, Germany
({\tt benner@mpi-magdeburg.mpg.de})}
\and
  Venera Khoromskaia \thanks{Max Planck Institute for
        Mathematics in the Sciences, Leipzig;
        Max Planck Institute for Dynamics of Complex Systems, Magdeburg ({\tt vekh@mis.mpg.de}).}
        \and Boris N. Khoromskij \thanks{Max Planck Institute for
        Mathematics in the Sciences, Inselstr.~22-26, D-04103 Leipzig,
        Germany ({\tt bokh@mis.mpg.de}).}\\  
            }

 \date{}

\maketitle

\begin{abstract}
We introduce and analyze the  new range-separated (RS) canonical/Tucker tensor  format 
   which aims for   numerical modeling of the 3D long-range interaction
potentials in multi-particle systems.
The main idea of the RS tensor format is the independent  grid-based 
low-rank representation of 
the localized and global parts in the target tensor which allows the efficient  numerical
approximation of $N$-particle interaction potentials.
The single-particle reference potential like $1/\|x\|$ is split into a sum of localized
and long-range low-rank canonical tensors represented on a fine
3D $n\times n\times n$ Cartesian grid.
The smoothed long-range contribution to the total potential sum  
is represented on the 3D grid in $O(n)$ storage via the low-rank canonical/Tucker tensor. 
We prove that the Tucker rank parameters depend only logarithmically 
on the number of particles $N$ and the grid-size $n$.
Agglomeration of the short range part in the sum is reduced to an independent 
treatment of $N$ localized terms with almost disjoint effective supports, 
calculated in $O(N)$ operations.
Thus, the cumulated sum of short range clusters is parametrized
by a single low-rank canonical reference tensor with a local support,
accomplished by a list of particle coordinates and their charges.
The RS canonical/Tucker tensor representations reduce the cost of multi-linear algebraic operations
on the 3D potential sums arising in  modeling of multi-dimensional data 
by radial basis functions, 
say, in computation of the electrostatic potential of a protein,
in 3D integration and convolution transforms, computation of gradients, 
forces and the interaction energy of a many-particle systems, and in low parametric 
fitting of multi-dimensional scattered data by reducing all of them to 1D calculations. 
\end{abstract}

\noindent\emph{Key words:}
Low-rank tensor decompositions, summation of electrostatic potentials, 
long-range many-particle interactions,
canonical and Tucker tensor formats, Ewald summation.

\noindent\emph{AMS Subject Classification:} 65F30, 65F50, 65N35, 65F10

\section{Introduction}\label{sec:Intro}

 
 Numerical treatment of long-range  potentials is a challenging task
 in computer modeling of dynamics and structure of multiparticle systems, 
 for example,  in molecular dynamics simulations of large solvated biological systems 
 like proteins, in analysis of periodic Coulombic systems or scattered data in geosciences,
 Monte Carlo sampling etc. 
\cite{PolGlos:96,DesHolm:98,HuMcCam:1999,Stein:2010,LiStCaMaMe:13,FoFl:15}.
For a given non-local generating kernel $p(\|x\|)$, $x \in \mathbb{R}^3$, 
the calculation of a weighted sum 
of  interaction potentials in the  large $N$-particle system,
with the particle locations at $x_\nu \in \mathbb{R}^3$, $\nu=1,...,N$, 
\begin{equation}\label{eqn:PotSum1}
 P(x)= {\sum}_{\nu=1}^{N} {Z_\nu}\,p({\|x-x_\nu\|}), \quad Z_\nu \in \mathbb{R},
  \quad x_\nu, x \in \Omega=[-b,b]^3,
\end{equation}
leads to computationally intensive numerical task. 
Indeed, the generating radial basis function $p(\|x\|)$ is allowed to have a slow polynomial 
decay in $1/\|x\|$  as $\|x\| \to \infty$  so that  each individual term 
in (\ref{eqn:PotSum1}) contributes essentially to the total potential 
at each point in the computational domain $\Omega$, thus predicting the 
$O(N)$ complexity for the straightforward 
summation at every fixed target $x \in \mathbb{R}^3$.  Moreover, in general, 
the function $p(\|x\|)$ has a singularity or a cusp at the origin, $x=0$, making 
its full grid representation problematic. 
Typical examples of the radial basis functions $p(\|x\|)$ are given by the Newton $1/\|x\|$, 
Slater $e^{-\lambda \|x\|}$, Yukawa/Helmholtz $e^{-\lambda \|x\|}/\|x\|$ and other Green's kernels 
(see examples in \S\ref{ssec:DataFit_Applic}). 

The traditional approaches based on the Ewald summation method \cite{Ewald:27}  combined with  
the fast Fourier transform (FFT)
usually apply to calculation of the interaction energy or the interparticle forces
of a system of $N$-particles with the periodic closure, 
which reduces the complexity scaling in a particle number from $O(N^2)$ 
to $O(N\log N)$ \cite{DesHolm:98,DesHolmII:98}.
These approaches need meshing up the result of Ewald sums over 3D 
Cartesian grid  for the charge assignment onto a $n_m \times n_m \times n_m$ mesh. 
Generation of the smoothed charge distribution on the right-hand side
of the arising Poisson's equation is the main complexity limitation 
since it requires the $N$-term summation of the grid functions of size $O(n_m^3)$,
presuming the dominating cost $O(n_m^3 \,N)$. This procedure is accomplished by
the cheap FFT solver with periodic boundary conditions that amounts 
to $O(n_m^3\log n_m)$ operations.

The mesh implementation approaches trace back to the original 
so-called particle-particle-particle-mesh (P$^3$M) methods \cite{HoEa:88}.

The fast multipole  expansion \cite{RochGreen:87} method  allows 
to compute some characteristics of the multi-particle potential, say, 
the interaction energy, at the expense $O(N\log N)$
by evaluation of the potential only at $N$  sampling  points $x_\nu$. 

 Computation of long-range interaction potentials of large many-particle systems is discussed
 for example in \cite{DYP:93,KuScuser:04,PolGlos:96}, and using grid-based approaches in 
 \cite{HoEa:88,Beck:Rev-2000,DesHolm:98,DesHolmII:98,ZuAmir:15,Fornberg-book-2015}.
 Ewald-type splitting of the Coulomb interaction into long- and short-range components 
 was applied in density functional theory calculations \cite{ToCoSa:04}.

In this paper, we introduce and analyze the  new range-separated (RS) canonical/Tucker 
tensor  format which aims for the efficient  numerical treatment of 
3D long-range interaction potentials in a system of rather generally distributed particles.
The main idea  of the RS format is the independent grid-based low-rank tensor 
representation to the long- and short-range parts in the  total sum of 
single-particle (say, electrostatic) potentials  in (\ref{eqn:PotSum1})  discretized on a fine
3D $n\times n\times n$ Cartesian grid $\Omega_n$ in the computational 
box  $\Omega \in \mathbb{R}^3$. 
 Such a representation is based on the
splitting of a single reference potential like $p(\|x\|)=1/\|x\|$ into a sum of localized
and long-range  low-rank  canonical tensors both represented on the computational 
grid $\Omega_n$.

 The main advantage of the RS format  is efficient representation
 of the long-range  contributions to the total potential sum in (\ref{eqn:PotSum1})
 by using the multigrid accelerated canonical-to-Tucker transform \cite{khor-ml-2009},
 which returns this part in a form of a low-rank canonical/Tucker tensor
 at the asymptotical cost $O(N\, n) $.
  In  Theorem \ref{thm:Rank_LongRange}, we prove that  the corresponding tensor rank
 only weakly (logarithmically) depends on the number of particles $N$.
 Hence, the  long-range contribution to the target sum is represented via the low-rank global
 canonical/Tucker tensor defined  on the fine $n\times n \times n$ grid $\Omega_n$,
 in the $O(n)$ storage.
 These features are demonstrated by numerical tests for
 the large 3D clusters of generally distributed particles.

 In turn, the short-range contribution to the total sum is constructed by using a \emph{single} 
 reference low-rank tensor of local support  selected  
from  the "short-range" canonical vectors in the tensor decomposition of the radial basis function $p(\|x\|)$. 
To that end the whole set of $N$ short-range clusters  is represented
 by replication and rescaling of the small-size localized canonical tensor
 defined on an $n_s\times n_s \times n_s$ Cartesian grid with $n_s\ll n$, 
thus reducing the storage to the $O(1)$-parametrization of the reference canonical tensor 
and the list of coordinates and charges of particles. 
Summation of the short-range part over $n\times n \times n$ grid
needs  $O(N \, n_s)$  computational work for $N$-particle system.
Such  cumulated sum of the short-range components allows  
 "local operations" in the  {RS-canonical format}, 
 making it particularly efficient  for tensor multilinear algebra.

The particular benefit of the RS approach is the low-parametric representation of
the collective interaction potential on a large 
3D Cartesian grid in the whole computational domain  $\Omega$  
at the linear cost $O(n)$, thus outperforming the traditional grid-based summation 
techniques based on the full-grid $O(n^3)$-representation in the volume.
Both global and local summation schemes are quite easy in program implementation.
The prototype algorithms in MATLAB\textsuperscript{\textregistered}  
applied  on a laptop allow to compute the RS-tensor representation of electrostatic potential for 
large  many-particle systems  on fine grids of size up to $n^3=10^{12}$.

The efficient numerical realization of RS formats  can be  achieved by a
trade off between the rank parameters in the long-range part and 
the effective support of the local sub-tensors. 
Indeed, the range separation step can be 
realized \emph{adaptively}  by a simple tuning of splitting rank parameters in the reference tensor
based on an $\varepsilon$-tolerance threshold in estimating the effective local support. 
 The low-rank RS canonical/Tucker tensor representation simplifies further operations 
on the resultant interaction potential, for example, 3D integration,
computation of gradients and forces, or evaluation of the interaction energy of a system 
by reducing all of them to 1D calculations. 

 As one of  many possible  applications of the RS tensor format, we propose a new numerical 
 scheme for  calculation of the free interaction energy of proteins, 
 and the enhanced regularized formulation for  solving the Poisson-Boltzmann equation (PBE) 
 that models the electrostatic  potential of proteins in a solvent. 
 We also demonstrate that the RS tensor 
formats can be useful in numerical modeling of the multi-dimensional scattered data
by means of the efficient data sparse approximation to the "inter-distance" matrix via
the short term sum of Kronecker product matrices with the "univariate" factors.
 
 The RS tensor format was motivated by the recent method for efficient
 summation of the long-range electrostatic potentials on large lattices with defects by using 
 the  assembled  canonical and Tucker tensors \cite{VeBoKh:Ewald:14,VeKhor_NLLA:15},
 which provides a competitive alternative to the Ewald summation schemes \cite{Ewald:27}.
In case of 3D finite lattice  systems, the grid-based tensor summation technique  yields
 asymptotic complexity $O(N^{1/3})$ in the number of particles $N$, and 
 almost linear complexity in the univariate grid-size $n$. 
 
 The RS-tensor approach can be interpreted as the model reduction based on the low-rank tensor approximations 
 (i.e., via a small number of representation parameters).
 The model reduction techniques for PDEs and control problems were described 
 in detail in \cite{BeMeSo-book-2004,QuMaNe:15,BeGuWi:15}.
 
 In the recent years, the tensor numerical methods  have been recognized as a powerful tool
 in scientific computing for multidimensional problems, 
 see for example \cite{KhorSurv:10,GraKresTo:13,VeKhorTromsoe:15,BeKhKh_BSE:15,BDKK_BSE:16} and 
    \cite{DeStMi:13,DeSt:12,RaOs:15,KrStUs:14,HaSch:14,CaEhLe:14,BeDoOnSt:16,DoPeSaSt:16}.
 In particular, the approximating properties of tensor decompositions in modeling 
 of high-dimensional problems have been addressed in 
 \cite{Stenger,Braess:BookApTh:86,HaKhtens:04I,DaDeGrSu:15}.
  Here we notice that in the case of higher dimensions
 the local canonical tensors can be combined with the global low-rank tensor train (TT) 
 representation \cite{Osel_TT:11}
 thus introducing the RS-TT format, see Remark \ref{rem:RS-TT}.

  The rest of the paper is organized as follows. In Section \ref{sec:S_L_split}, we 
  introduce the canonical and Tucker tensor formats,
  and provide a  description of the short-long range splitting to 
  the canonical tensor approximation of the Newton kernel 
  by using $\mbox{sinc}$-quadratures applied to the Laplace transform. 
   Section \ref{sec:S_L_split} also discusses the principles for 
  selection of the short and long-range parts in the  reference  electrostatic potential.
  Grid-based tensor splitting of the electrostatic potential sums is addressed in Section
  \ref{sec:Fast_Sum_Split},  where the efficient computation of  
  the long-range part of the potential is described.  
 In particular, Section \ref{ssec:Cumulat_CanTens} introduces and analyses the classes
 of range-separated  tensor formats.
  The possible application to protein modeling is  addressed  in Section \ref{sec:SepSum_Applic}.
  Furthermore, we  discuss  how the RS tensor formats may be utilized in the
 numerical  treatment  of multi-dimensional scattered data, 
 for calculation of gradients, forces and interaction energy of the system.
 Appendix recalls the main ingredients of the reduced HOSVD tensor approximation and 
 the canonical-to-Tucker tensor transform  applied in the numerical implementations. 
 

\section{Range separated tensor form of a reference potential}\label{sec:S_L_split}
 
\subsection{Representation of multivariate functions via low-rank tensors}\label{ssec:Tensors}

In this section we recall the commonly used rank-structured tensor 
formats\footnote{ The commonly used notion \emph{rank-structured tensor formats} 
 for the compressed representation 
 of multidimensional data is usually understood in sense of the (nonlinear) parametrization 
by a small number of parameters that allows low storage costs, a simple
representation of each entry in the target data array, and the efficient "formatted" multilinear
algebra via reduction to univariate operations. } 
utilized in this paper 
(see also the literature surveys \cite{KoldaB:07,KhorSurv:10,GraKresTo:13}). 
The traditional canonical and Tucker tensor representations were long since
known in computer science for the quantitative analysis of correlations in the
multidimensional data arising in image processing, chemometrics, psychometrics etc.,
see \cite{DMV-SIAM2:00,KoldaB:07} and references therein.


These formats have attracted the attention of the scientific computation community when it was recently 
shown numerically and rigorously proved that in most cases function related tensors allow   
low-rank tensor decomposition \cite{khor-rstruct-2006,KhKh:06}. 
In particular, they proved to be efficient  
for real-space calculations in computational quantum chemistry 
\cite{VeBoKh:Ewald:14,VeKhorTromsoe:15}.

A tensor of order $d$ is defined as a multidimensional array
over a $d$-tuple index set,
$$
{\bf A}=[a_{i_1,\ldots,i_d}]\equiv [a(i_1,\ldots,i_d)] \; \in 
\mathbb{R}^{n_1 \times \ldots \times n_d}\;
\mbox{  with } \quad i_\ell\in I_\ell:=\{1,\ldots,n_\ell\},
$$
considered as an element of a linear vector space equipped with the Euclidean scalar product.
Tensors 
with all dimensions having equal size $n_\ell =n$, $\ell=1,\ldots d$,
will be called an $n^{\otimes d}$ tensor. The  required storage size 
scales exponentially in the dimension, $n^d$, which results in the 
so-called ''curse of dimensionality``. 

To get rid of exponential scaling in the dimension, one can apply  the 
rank-structured separable representations (approximations) of multidimensional tensors.
The simplest separable element is given by the rank-$1$ tensor, 
\[ {\bf U} = {\bf u}^{(1)}\otimes \ldots 
 \otimes {\bf u}^{(d)}\in \mathbb{R}^{n_1 \times \ldots \times n_d},
\]
with entries
$
 u_{i_1,\ldots, i_d}=  u^{(1)}_{i_1} \cdots u^{(d)}_{i_d},
$
requiring only $n_1+ \ldots +n_d$ numbers to store it. 
A tensor in the $R$-term canonical format is defined by a finite sum of rank-$1$ tensors,
 \begin{equation}\label{eqn:CP_form}
   {\bf U} = {\sum}_{k =1}^{R} \xi_k
   {\bf u}_k^{(1)}  \otimes \ldots \otimes {\bf u}_k^{(d)},  \quad  \xi_k \in \mathbb{R},
\end{equation}
where ${\bf u}_k^{(\ell)}\in \mathbb{R}^{n_\ell}$ are normalized vectors, 
and $R$ is called the canonical rank of a tensor. Now the storage cost is bounded by  $d R n$.
For $d\geq 3$, there are no algorithms for computation of the canonical rank of a tensor ${\bf U}$, i.e.
the minimal number $R$ in representation (\ref{eqn:CP_form}) and the respective
decomposition with the polynomial cost in $d$.


We say that a tensor ${\bf V}$ is represented in the rank-$\bf r$ orthogonal Tucker format 
with the rank parameter ${\bf r}=(r_1,\ldots,r_d)$, if 
\begin{equation}\label{eqn:Tucker_form}
  {\bf V} =\sum\limits_{\nu_1 =1}^{r_1}\ldots
\sum\limits^{r_d}_{{\nu_d}=1} \beta_{\nu_1, \ldots ,\nu_d}
\,  {\bf v}^{(1)}_{\nu_1} \otimes \ldots \otimes {\bf v}^{(d)}_{\nu_d},\quad \ell=1,\ldots,d, 
\end{equation}
where $\{{\bf v}^{(\ell)}_{\nu_\ell}\}_{\nu_\ell=1}^{r_\ell}\in \mathbb{R}^{n_\ell}$,  
represents a set of orthonormal vectors for $\ell=1,\ldots,d$, 
and $\boldsymbol{\beta}=[\beta_{\nu_1,\ldots,\nu_d}] \in \mathbb{R}^{r_1\times \cdots \times r_d}$ is 
the Tucker core tensor. 
The storage cost for the Tucker tensor is bounded by $d r n +r^d$, 
with $r=|{\bf r}|:=\max_\ell r_\ell$.

In the case $d=2$, the orthogonal Tucker decomposition is equivalent to the singular 
value decomposition (SVD) of a rectangular matrix. 

An equivalent notation for the Tucker tensor format can be used,
\begin{equation}\label{eqn:Tucker_CS}
 {\bf V} = \boldsymbol{\beta} \times_1 V^{(1)} \times_2 V^{(2)}\ldots \times_d V^{(d)},
 \end{equation}
where $\times_\ell$ denotes the contraction along the mode $\ell$ and orthogonal matrices 
$V^{(\ell)}=[{\bf v}^{(\ell)}_{1} \ldots {\bf v}^{(\ell)}_{r_\ell}]\in 
\mathbb{R}^{n_\ell \times r_\ell}$ 
incorporate the set of orthogonal vectors $\{{\bf v}^{(\ell)}_{\nu_\ell}\}$. 
Likewise, the representation (\ref{eqn:CP_form}) can be written as the rank-$(R,\ldots,R)$ 
(non-orthogonal) Tucker tensor
\begin{equation}\label{eqn:CP_form_ContrpTuck}
 {\bf U}=\boldsymbol{\xi} \times_1 {U}^{(1)}\times_2 {U}^{(2)} \ldots \times_d {U}^{(d)}, 
\end{equation}
by introducing the so-called side matrices 
$
 U^{(\ell)}=[{\bf u}_1^{(\ell)}\ldots {\bf u}_R^{(\ell)}] \in \mathbb{R}^{n_\ell \times R}, 
$
$\ell=1, ...,d,$
obtained by concatenation of the canonical vectors ${\bf u}_k^{(\ell)}$, $k=1,\ldots R$, 
and the diagonal Tucker core tensor
$\boldsymbol{\xi}:=\mbox{diag} \{ \xi_1,\ldots,\xi_R\}\in \mathbb{R}^{R\times \ldots\times R}$ such that
$\xi_{\nu_1,\ldots,\nu_d}=0$ except when $\nu_1=\ldots=\nu_d$ with 
$\xi_{\nu,\ldots,\nu}=\xi_{\nu}$ ($\nu=1,\ldots,R$). 

The exceptional properties of the Tucker decomposition for the approximation of  
discretized multidimensional functions have been revealed in \cite{khor-rstruct-2006,KhKh:06}, 
where it was proven that for a class of function-related tensors the approximation error of the 
Tucker decomposition {decays exponentially} in the Tucker rank.  

Rank-structured tensor representations  provide fast multilinear algebra with linear 
complexity scaling in the dimension $d$. For example, 
for given canonical tensors (\ref{eqn:CP_form}), the Euclidean scalar product, the 
Hadamard product and $d$-dimensional convolution can be computed by simple tensor
operations in $1D$ complexity \cite{KhKh:06}.
In tensor-structured numerical methods,  calculation of the $d$-dimensional
convolution integrals  is replaced by a sequence of $1D$  scalar and Hadamard products, 
and $1D$ convolution transforms \cite{khor-ml-2009,KhKh:06},
leading to $O( d n \log n)$ computational work instead of $O(n^d)$.
However, the multilinear tensor operations in the above mentioned formats mandatory lead to increase 
of tensor ranks which can be then reduced by the canonical-to-Tucker and Tucker-to-canonical algorithms  
introduced in \cite{KhKh:06,khor-ml-2009}, see Appendix for the description of the canonical-to-Tucker
algorithm.

\subsection{Canonical tensor representation of the 3D Newton kernel}
 \label{ssec:Coulomb}
 
Methods of separable approximation to the 3D Newton kernel (electrostatic potential) 
using the Gaussian sums have been addressed in the chemical and mathematical literature 
since \cite{Boys:56} and \cite{Stenger,Braess:BookApTh:86,HaKhtens:04I}, respectively.
The approach to tensor decomposition for a class of lattice -structured 
interaction potentials $p(\|x\|)$ was presented in \cite{VeBoKh:Ewald:14,VeKhor_NLLA:15}.
In this section, we recall the grid-based method for the low-rank canonical  
representation of a spherically symmetric kernel function $p(\|x\|)$, 
$x\in \mathbb{R}^d$ for $d=1,2,3$, by its projection onto the set
of piecewise constant basis functions, see \cite{BeHaKh:08} for the case of 
Newton and Yukawa kernels 
$p(\|x\|)=\frac{1}{\|x\|}$, and $p(\|x\|)=\frac{e^{-\lambda \|x\|}}{\|x\|}$, for $x\in \mathbb{R}^3$.
The single reference potential like $1/\|x\|$ can be 
represented on a fine 3D $n\times n\times n$ Cartesian grid
in the form low-rank canonical tensor \cite{HaKhtens:04I,BeHaKh:08}.

In the computational domain  $\Omega=[-b,b]^3$, 
let us introduce the uniform $n \times n \times n$ rectangular Cartesian grid $\Omega_{n}$
with mesh size $h=2b/n$ ($n$ even).
Let $\{ \psi_\textbf{i}\}$ be a set of tensor-product piecewise constant basis functions,
$  \psi_\textbf{i}(\textbf{x})=\prod_{\ell=1}^3 \psi_{i_\ell}^{(\ell)}(x_\ell)$,
for the $3$-tuple index ${\bf i}=(i_1,i_2,i_3)$, $i_\ell \in \{1,...,n\}$, $\ell=1,\, 2,\, 3 $.
The generating kernel $p(\|x\|)$ is discretized by its projection onto the basis 
set $\{ \psi_\textbf{i}\}$
in the form of a third order tensor of size $n\times n \times n$, defined entry-wise as
\begin{equation}  \label{galten}
\mathbf{P}:=[p_{\bf i}] \in \mathbb{R}^{n\times n \times n},  \quad
 p_{\bf i} = 
\int_{\mathbb{R}^3} \psi_{\bf i} ({x}) p(\|{x}\|) \,\, \mathrm{d}{x}.
\end{equation}

The low-rank canonical decomposition of the $3$rd order tensor $\mathbf{P}$ is based 
on using exponentially convergent 
$\operatorname*{sinc}$-quadratures for approximation of the Laplace-Gauss transform 
to the analytic function $p(z)$, $z \in \mathbb{C}$, specified by a certain weight $a(t) >0$,
\begin{align} \label{eqn:laplace} 
p(z)=\int_{\mathbb{R}_+} a(t) e^{- t^2 z^2} \,\mathrm{d}t \approx
\sum_{k=-M}^{M} a_k e^{- t_k^2 z^2} \quad \mbox{for} \quad |z| > 0,\quad z \in \mathbb{R},
\end{align} 
where the quadrature points and weights are given by 
\begin{equation} \label{eqn:hM}
t_k=k \mathfrak{h}_M , \quad a_k=a(t_k) \mathfrak{h}_M, \quad 
\mathfrak{h}_M=C_0 \log(M)/M , \quad C_0>0.
\end{equation}
Under the assumption $0< a \leq |z |  < \infty$
this quadrature can be proven to provide an exponential convergence rate in $M$
for a class of analytic functions $p(z)$, see \cite{Stenger,HaKhtens:04I}. 
In particular, 
for the Newton kernel, $p(z)=1/z$, the Laplace-Gauss transform takes the form
\[
 \frac{1}{z}= \frac{2}{\sqrt{\pi}}\int_{\mathbb{R}_+} e^{- z^2 t^2 } dt, \quad 
 \mbox{where}\quad z=\sqrt{x_1^2 + x_2^2  + x_3^2}.
\]

Now, for any fixed $x=(x_1,x_2,x_3)\in \mathbb{R}^3$, 
such that $\|{x}\| > a > 0$, 
we apply the $\operatorname*{sinc}$-quadrature approximation (\ref{eqn:laplace}), (\ref{eqn:hM}) 
to obtain the separable expansion
\begin{equation} \label{eqn:sinc_Newt}
 p({\|{x}\|}) =   \int_{\mathbb{R}_+} a(t)
e^{- t^2\|{x}\|^2} \,\mathrm{d}t  \approx 
\sum_{k=-M}^{M} a_k e^{- t_k^2\|{x}\|^2}= 
\sum_{k=-M}^{M} a_k  \prod_{\ell=1}^3 e^{-t_k^2 x_\ell^2},
\end{equation}
providing an exponential convergence rate in $M$,
\begin{equation} \label{sinc_conv}
\left|p({\|{x}\|}) - \sum_{k=-M}^{M} a_k e^{- t_k^2\|{x}\|^2} \right|  
\le \frac{C}{a}\, \displaystyle{e}^{-\beta \sqrt{M}},  
\quad \text{with some} \ C,\beta >0.
\end{equation}
Combining \eqref{galten} and \eqref{eqn:sinc_Newt}, and taking into account the 
separability of the Gaussian basis functions, we arrive at the low-rank 
approximation to each entry of the tensor $\mathbf{P}$,
\begin{equation*} \label{eqn:C_nD_0}
 p_{\bf i} \approx \sum_{k=-M}^{M} a_k   \int_{\mathbb{R}^3}
 \psi_{\bf i}({x}) e^{- t_k^2\|{x}\|^2} \mathrm{d}{x}
=  \sum_{k=-M}^{M} a_k  \prod_{\ell=1}^{3}  \int_{\mathbb{R}}
\psi^{(\ell)}_{i_\ell}(x_\ell) e^{- t_k^2 x^2_\ell } \mathrm{d} x_\ell.
\end{equation*}

\begin{figure}[htb]
\centering
\includegraphics[width=8.0cm]{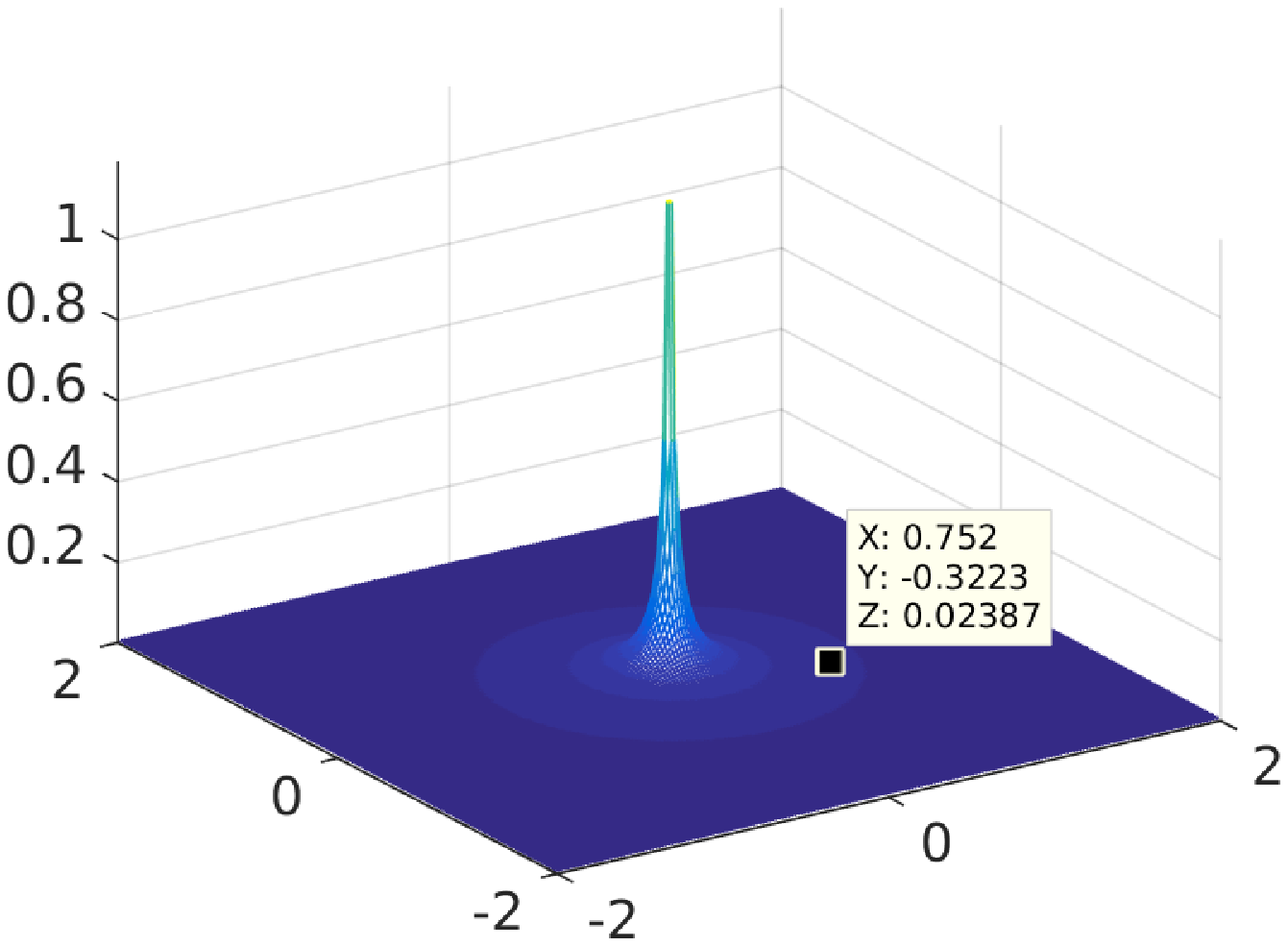}
\includegraphics[width=8.0cm]{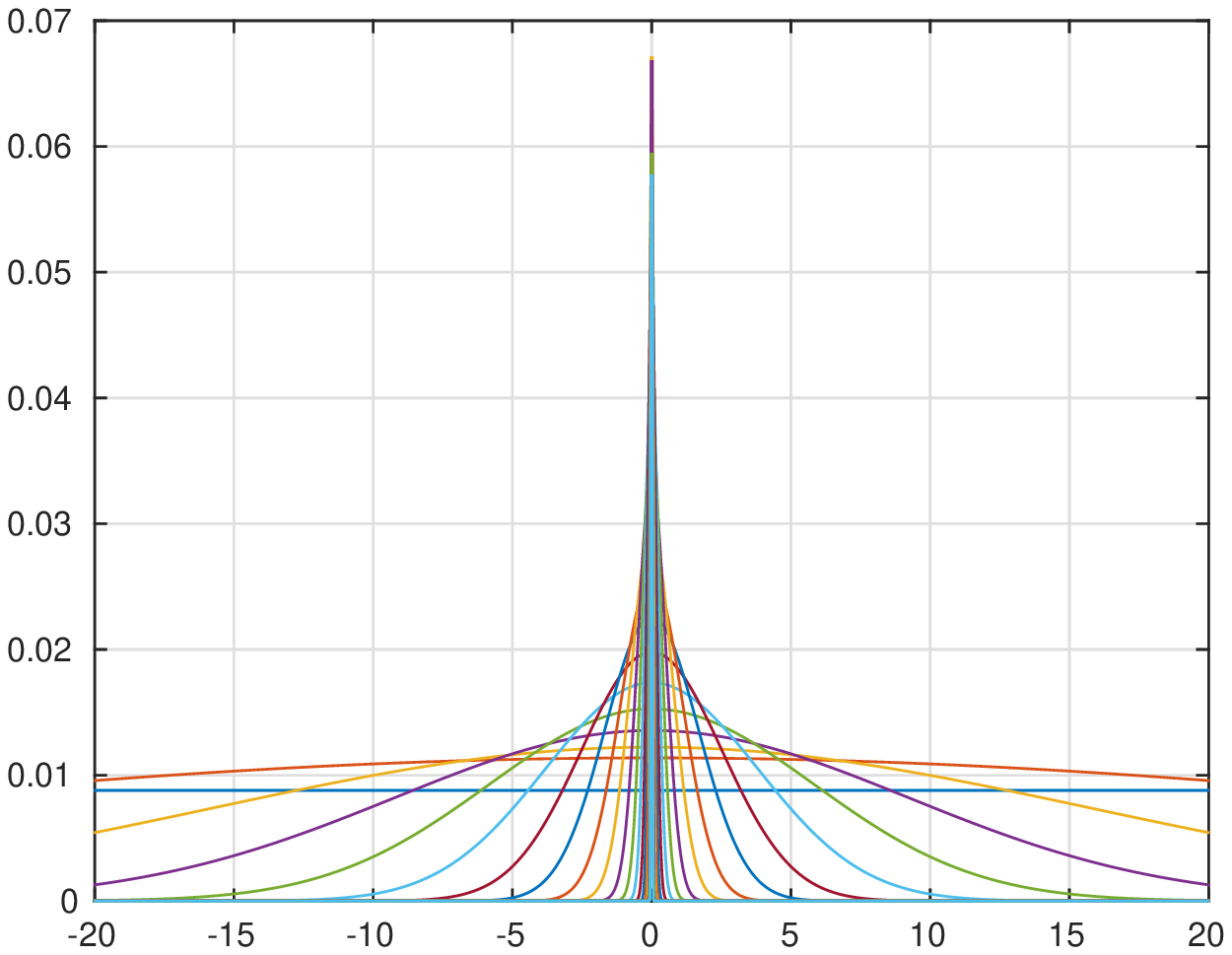}
\caption{Vectors  of the canonical tensor representation,  $\{{\bf p}^{(1)}_q\}_{q=1}^R$,
for the Newton kernel displayed along  $x$-axis: $n=1024$, $R=20$. }
\label{fig:1024_r20}
\end{figure}

Define the vector (recall that $a_k >0$)
\begin{equation*} \label{eqn:galten_int}
\textbf{p}^{(\ell)}_k
= a_k^{1/3} \left[b^{(\ell)}_{i_\ell}(t_k)\right]_{i_\ell=1}^{n_\ell} \in \mathbb{R}^{n_\ell}
\quad \text{with } \quad b^{(\ell)}_{i_\ell}(t_k)= 
\int_{\mathbb{R}} \psi^{(\ell)}_{i_\ell}(x_\ell) e^{- t_k^2 x^2_\ell } \mathrm{d}x_\ell,
\end{equation*}
then the $3$rd order tensor $\mathbf{P}$ can be approximated by 
the $R$-term ($R=2M+1$) canonical representation
\begin{equation} \label{eqn:sinc_general}
    \mathbf{P} \approx  \mathbf{P}_R =
\sum_{k=-M}^{M} a_k \bigotimes_{\ell=1}^{3}  {\bf b}^{(\ell)}(t_k)
= \sum\limits_{k=-M}^{M} {\bf p}^{(1)}_k \otimes {\bf p}^{(2)}_k \otimes {\bf p}^{(3)}_k
\in \mathbb{R}^{n\times n \times n}, \quad {\bf p}^{(\ell)}_k \in \mathbb{R}^n.
\end{equation}
Given a threshold $\varepsilon >0 $,  $M$ can be chosen as the minimal number
such that in the max-norm
\begin{equation*} \label{eqn:error_control}
\| \mathbf{P} - \mathbf{P}_R \|  \le \varepsilon \| \mathbf{P}\|.
\end{equation*}
The skeleton vectors can be re-numerated by $k \mapsto q=k+M+1$, 
${\bf p}^{(\ell)}_k \mapsto {\bf p}^{(\ell)}_{q}$, ($q=1,...,R$), $\ell=1, 2, 3$.
The canonical tensor ${\bf P}_{R}$ in (\ref{eqn:sinc_general})
approximates the 3D symmetric kernel function 
$p({\|x\|})$ ($x\in \Omega$), centered at the origin, such that 
${\bf p}^{(1)}_q={\bf p}^{(2)}_q={\bf p}^{(3)}_q$ ($q=1,...,R$).

In the case of the Newton kernel the term ${\bf p}^{(\ell)}_k$ equals  ${\bf p}^{(\ell)}_{-k}$, 
and the sum (\ref{eqn:sinc_general}) reduces to $k=0,1,...,M$, implying $R=M+1$.
Figure \ref{fig:1024_r20} displays the canonical vectors in the   tensor 
representation  (\ref{eqn:sinc_general}) for the Newton kernel along the $x$-axis from a set 
$\{{\bf p}^{(1)}_q\}_{q=1}^R$.
It is clearly seen that there are canonical  vectors representing the long- 
and short-range contributions to the total electrostatic potential. 
This interesting feature was also recognized for the rank-structured 
tensors representing a lattice sum of potentials \cite{VeBoKh:Ewald:14,VeKhor_NLLA:15}. 

\subsection{Tensor splitting of the kernel into long- and short-range parts}\label{ssec:S_L_split}

 From the definition of the quadrature (\ref{eqn:sinc_general}), (\ref{eqn:hM}), we can easily observe
that the full set of approximating Gaussians includes two classes of functions: those with 
small "effective support" and the long-range functions. Clearly, functions from different classes 
may require different tensor-based schemes for their efficient numerical treatment.
Hence, the idea of the new approach is the constructive implementation of a 
range separation scheme that allows the independent efficient treatment 
of both the long- and short-range parts in the approximating kernel.

In the following, without loss of generality, we confine ourselves to the case of the Newton kernel, 
so that the sum in (\ref{eqn:sinc_general}) reduces to $k=0,1,\ldots,M$ (due to symmetry argument).
>From (\ref{eqn:hM}) we observe that the sequence of quadrature points $\{t_k\}$,   
can be split into two subsequences, 
$$
{\cal T}:=\{t_k|k=0,1,\ldots,M\}={\cal T}_l \cup {\cal T}_s,
$$ 
with
\begin{equation} \label{eqn:Split_Qpoints}
{\cal T}_l:=\{t_k\,|k=0,1,\ldots,R_l\}, \quad  
\mbox{and} \quad {\cal T}_s:=\{t_k\,|k=R_l+1,\ldots,M\}.
\end{equation}
Here
${\cal T}_l$ includes quadrature points $t_k$ condensed ``near'' zero, hence generating 
the long-range Gaussians (low-pass filters), 
and ${\cal T}_s$ accumulates the increasing in $M\to \infty$ 
sequence of ``large'' sampling points $t_k$ with the upper bound $C_0^2 \log^2(M)$, 
corresponding to the short-range Gaussians (high-pass filters).
Notice that the quasi-optimal choice of the constant $C_0\approx 3$ was determined in \cite{BeHaKh:08}.
We futher denote ${\cal K}_l:=\{k\,| k=0,1,\ldots,R_l\}$ and ${\cal K}_s:=\{k\,| k=l+1,\ldots,M \}$. 

Splitting (\ref{eqn:Split_Qpoints}) generates the additive decomposition of the canonical tensor
$\mathbf{P}_R$ onto the short- and long-range parts,
\[
 \mathbf{P}_R = \mathbf{P}_{R_s} + \mathbf{P}_{R_l},
\]
where
\begin{equation} \label{eqn:Split_Tens}
    \mathbf{P}_{R_s} =
\sum\limits_{t_k\in {\cal T}_s} {\bf p}^{(1)}_k \otimes {\bf p}^{(2)}_k \otimes {\bf p}^{(3)}_k, 
\quad \mathbf{P}_{R_l} =
\sum\limits_{t_k\in {\cal T}_l} {\bf p}^{(1)}_k \otimes {\bf p}^{(2)}_k \otimes {\bf p}^{(3)}_k.
\end{equation}

The choice of the critical number $R_l=\# {\cal T}_l -1$ (or equivalently, $R_s=\# {\cal T}_s = M-R_l$), 
that specifies the splitting ${\cal T}={\cal T}_l \cup {\cal T}_s$,
is determined by the \emph{active support} 
of the short-range components 
such that one can cut off the functions ${\bf p}_k(x)$, $t_k\in {\cal T}_s$, 
outside of the sphere $B_{\sigma}$ of radius ${\sigma}>0$, 
subject to a certain threshold $\delta>0$.
For fixed $\delta>0$, the choice of $R_s$ is uniquely defined by 
the (small) parameter $\sigma$ and vise versa. 
The following two basic criteria, corresponding to (A) the max- and (B) $L^1$-norms estimates 
can be applied given $\sigma$:
\begin{equation} \label{eqn:Split_crit_max}
 (A) \quad {\cal T}_s=\{t_k:\,a_k e^{-t_k^2 \sigma^2} \leq \delta \}\;\Leftrightarrow \;
 R_l=\min k : \,  a_k e^{-t_k^2 \sigma^2} \leq \delta, 
\end{equation}
or
\begin{equation} \label{eqn:Split_crit_L2}
 (B)\quad {\cal T}_s:=\{t_k:\, a_k \int_{B_{\sigma}} e^{-t_k^2 x^2} dx \leq \delta\}\;
 \Leftrightarrow \; R_l=\min k : \,
  a_k \int_{B_{\sigma}} e^{-t_k^2 x^2} dx \leq \delta.
\end{equation}
The quantitative estimates on the value of $R_l$ can be easily calculated by using the explicit equation
(\ref{eqn:hM}) for the quadrature parameters. 
For example, in case $C_0=3$ and $a(t)=1$, criteria (A) implies that $R_l$ solves the equation
\[
 \left(\frac{3R_l \log M}{M}\right)^2 \sigma^2 = \log(\frac{\mathfrak{h}_M}{\delta}).
\]
Criteria (\ref{eqn:Split_crit_max}) and (\ref{eqn:Split_crit_L2}) can be slightly modified 
depending on the particular applications to many-particles systems.
For example, in electronic structure calculations, the parameter $\sigma$ can be associated with the 
typical inter-atomic distance in the molecular system of interest.

\begin{figure}[htb]
\centering
\includegraphics[width=8.0cm]{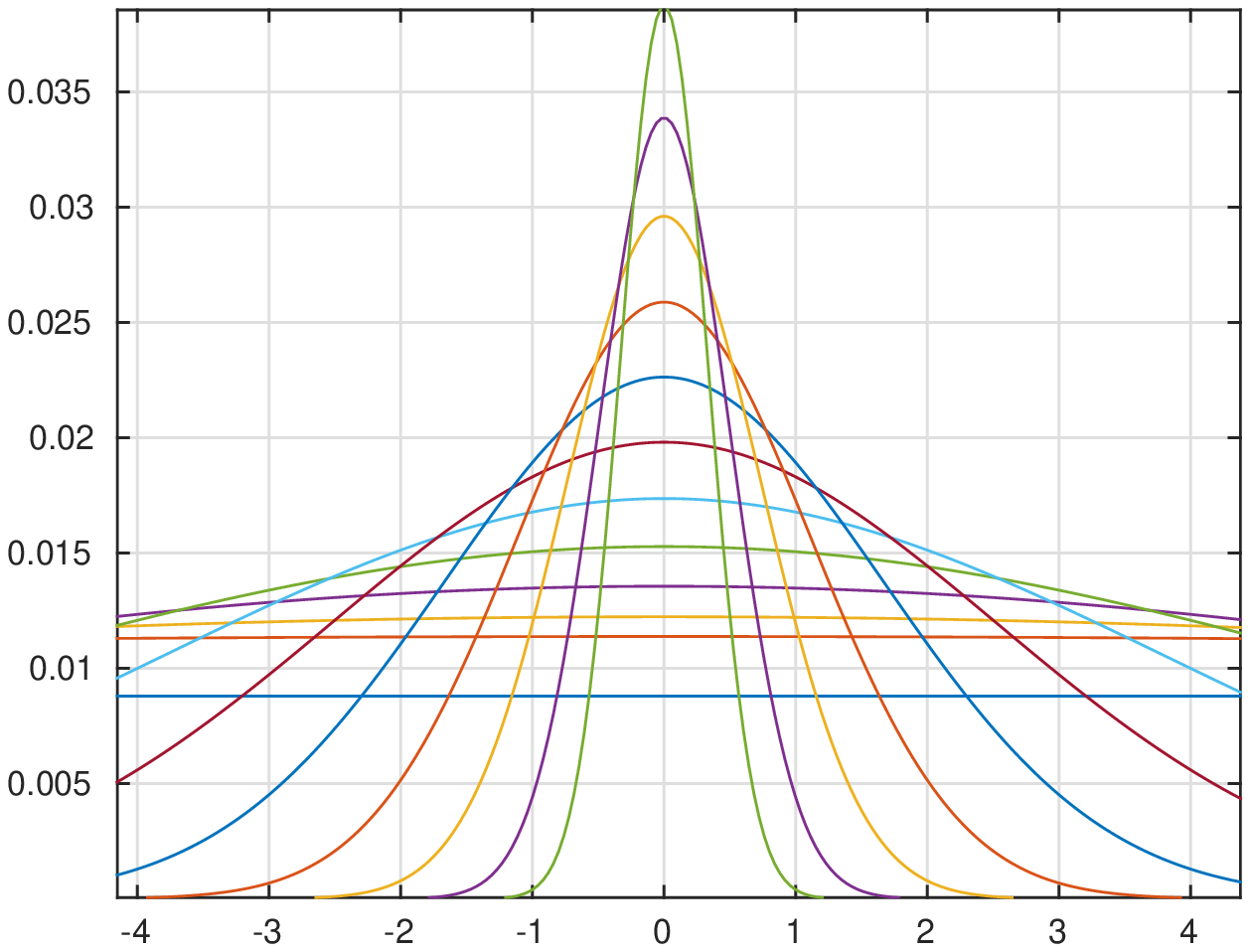} 
\includegraphics[width=8.0cm]{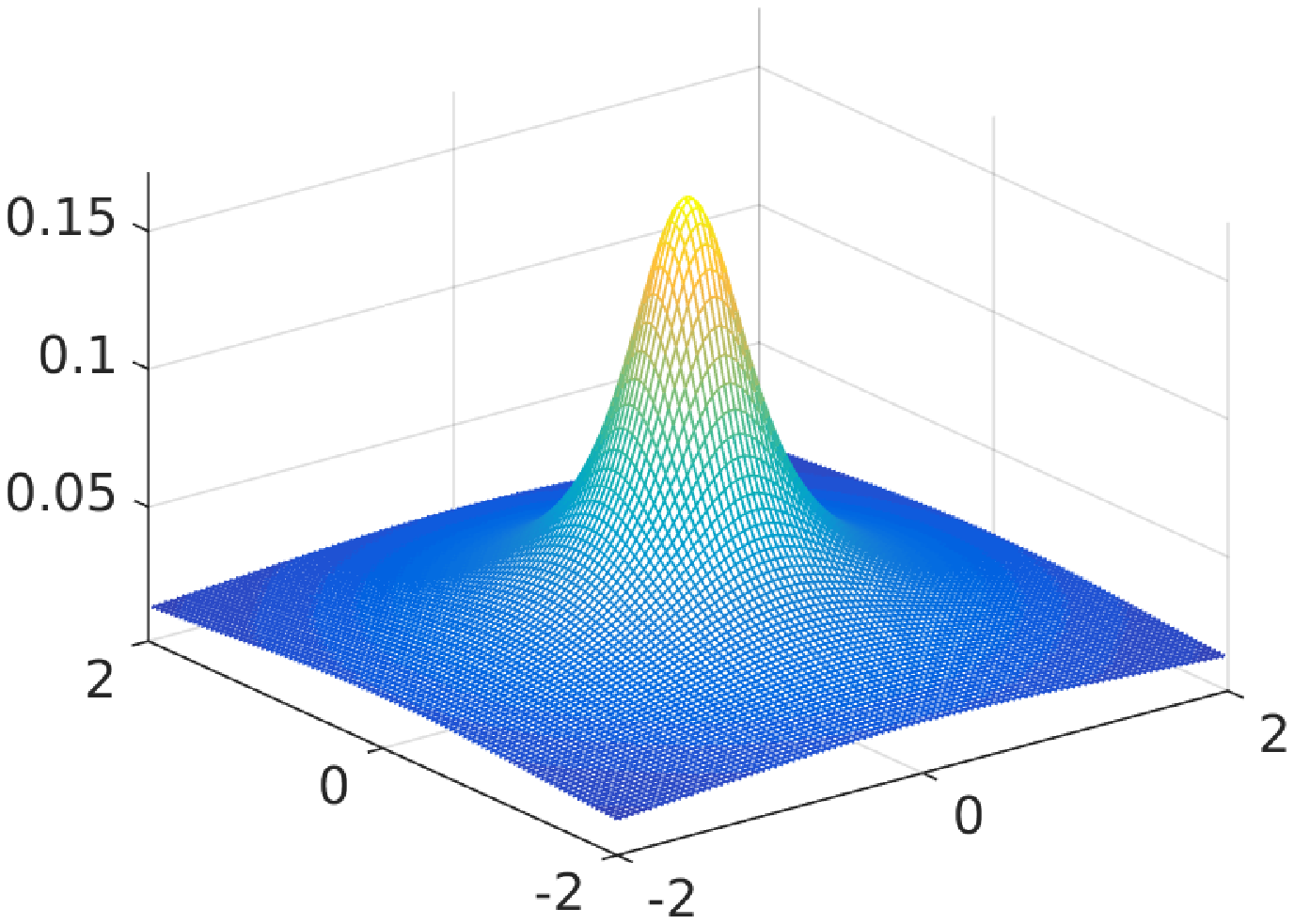}
\caption{Long-range canonical vectors for $n=1024$, $R=20, R_l=12$ and the corresponding
potential. }
\label{fig:1024_rl12}
\end{figure}

Figures \ref{fig:1024_rl12} and \ref{fig:1024_rs8}  illustrate the splitting (\ref{eqn:Split_Qpoints}) 
for the tensor ${\bf P}_R$ computed on 
the $n\times n\times n$ grid with the parameters $R=20, R_l=12$ and $R_s=8$, respectively.
Figure \ref{fig:1024_rl12} shows the long-range canonical vectors from ${\bf P}_{R_l}$ in (\ref{eqn:Split_Tens}), 
while Figure \ref{fig:1024_rs8} displays the short-range part described by ${\bf P}_{R_s}$.
Following criteria (A) with $\delta \approx 10^{-4}$, the effective support 
for this splitting is determined by $\sigma =0.9$.
It can be seen that the complete Newton kernel depicted in Figure \ref{fig:1024_r20} covers 
the long-range behavior, 
while the function values of the tensor ${\bf P}_{R_s}$ vanish exponentially fast 
apart of the effective support, as can be seen in Figure \ref{fig:1024_rs8}.

\begin{figure}[htb]
\centering
\includegraphics[width=8.0cm]{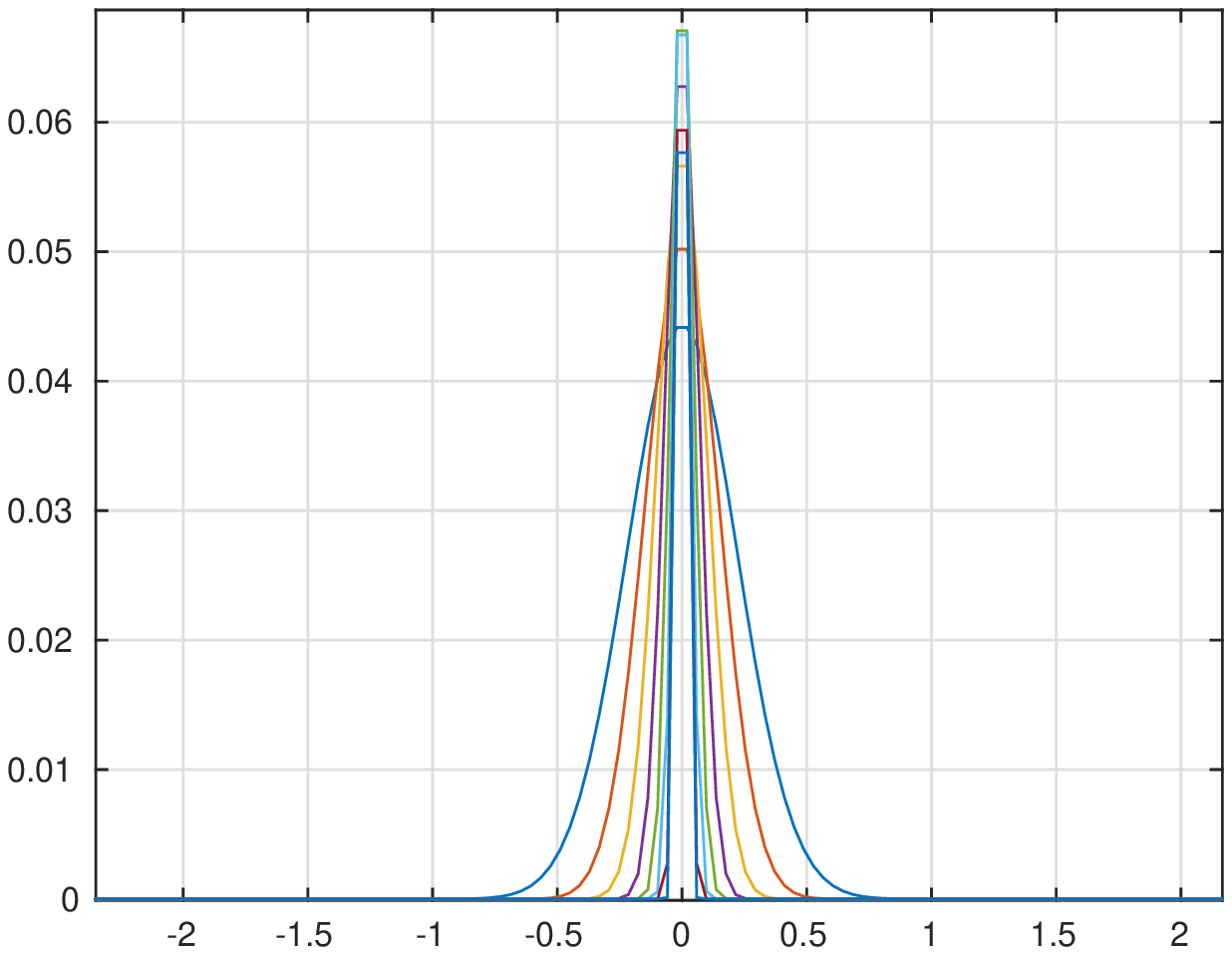}
\includegraphics[width=8.0cm]{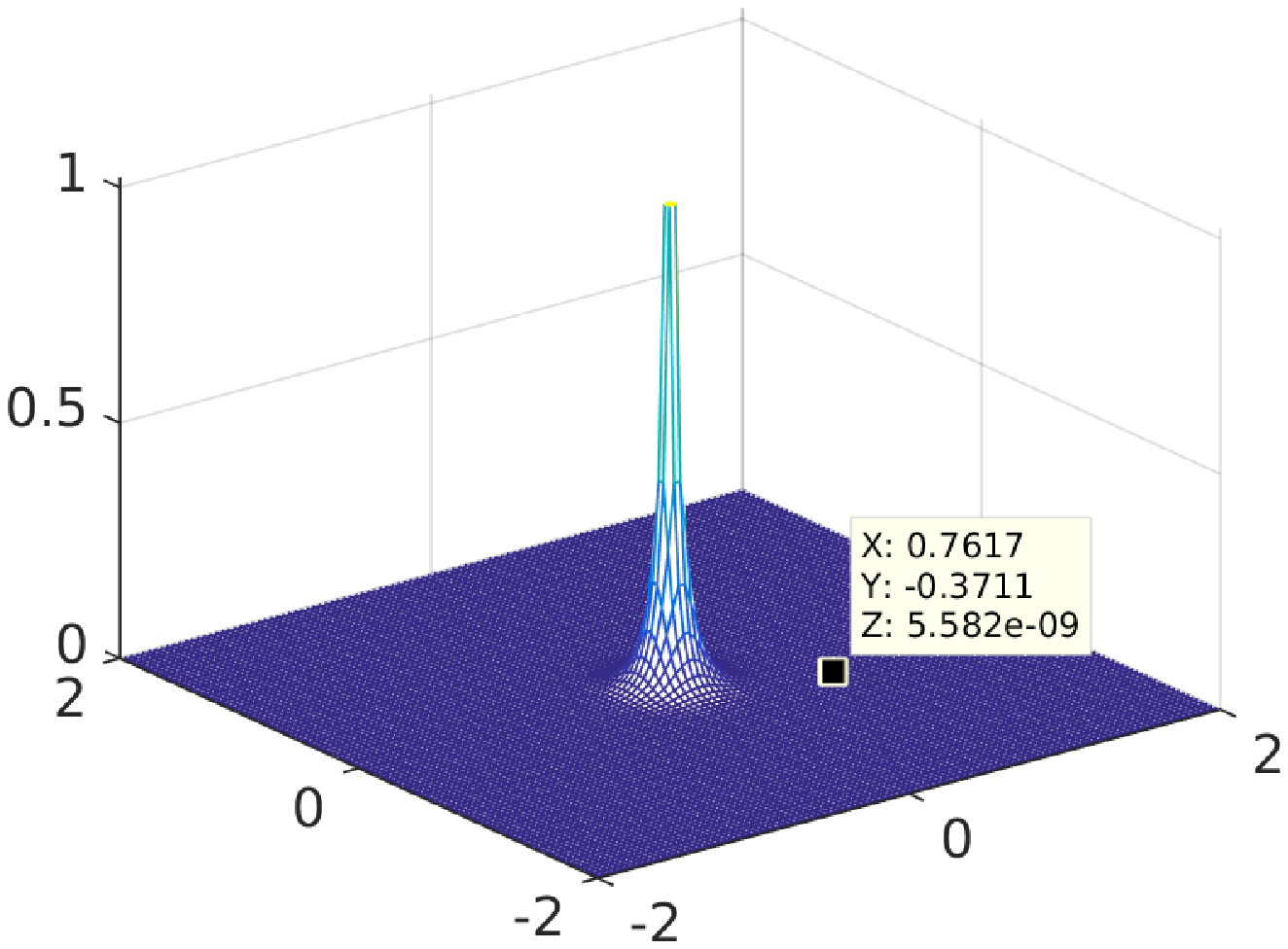}
\caption{Short-range canonical vectors for $n=1024$, $R=20, R_s=8$,
 and the corresponding potential.}
\label{fig:1024_rs8}
\end{figure}

Inspection of the quadrature point distribution in (\ref{eqn:hM}) shows that the 
short- and long-range subsequences are nearly equally balanced distributed, 
so that one can expect approximately 
\begin{equation} \label{eqn:ChoiceR_l}
 R_s \approx R_l=M/2.
\end{equation} 
The optimal choice may depend on the particular applications.

The advantage of the range separation in the splitting of 
the canonical tensor ${\bf P}_R \mapsto {\bf P}_{R_s} + {\bf P}_{R_l}$  in 
(\ref{eqn:Split_Tens}) is the opportunity for 
independent tensor representations of both sub-tensors ${\bf P}_{R_s}$ and ${\bf P}_{R_l}$ 
providing the separate treatment of the short- and long-range parts in the sum of 
many interaction potentials.

Finally, we notice that the \emph{range separation principle} can be generalized to 
more than two-term splitting, taking into account the  requirements of specific applications.

\section{Tensor summation of range-separated potentials}\label{sec:Fast_Sum_Split}

In this section we describe how the range separated tensor representation of the 
generating potential function can be applied  for the fast and accurate grid-based 
computation of a large sum of non-local potentials centered at arbitrary locations 
in the 3D volume.   This is the bottleneck problem in numerical modeling
of large 
$N$-particle systems. 

\subsection{Quasi-uniformly separable point distributions}\label{ssec:Separ_PointsDistr}

One of the main limitations for the use of direct grid-based canonical/Tucker approximations 
to the large potential sums is due to the strong increase in tensor rank proportionally to the 
number of particles $N$ in a system. Figure \ref{fig:Full_Rank} shows the Tucker ranks for 
the protein-type system consisting of $N=783$ atoms.
\begin{figure}[htb]
\centering
\includegraphics[width=7.0cm]{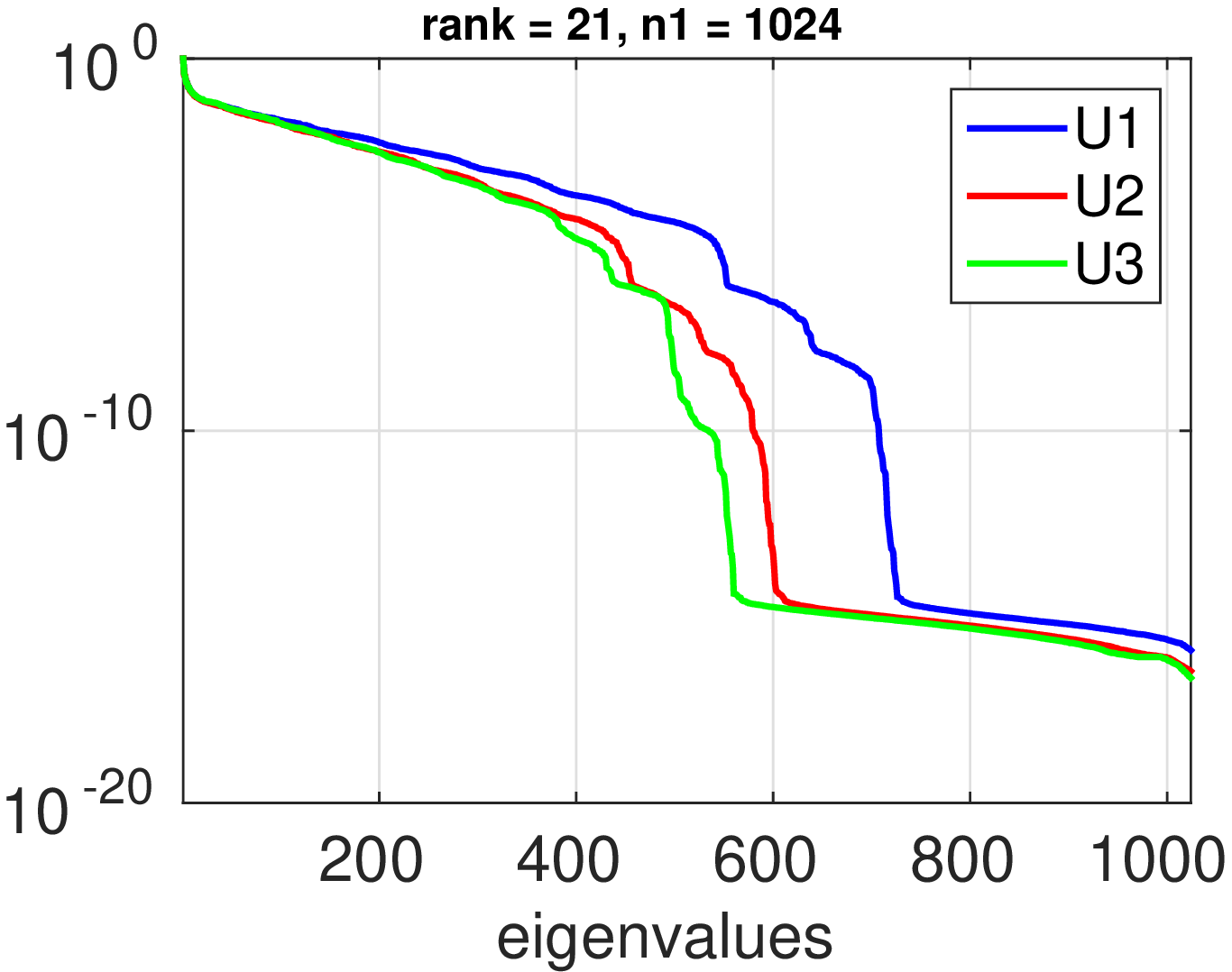} 
\includegraphics[width=7.0cm]{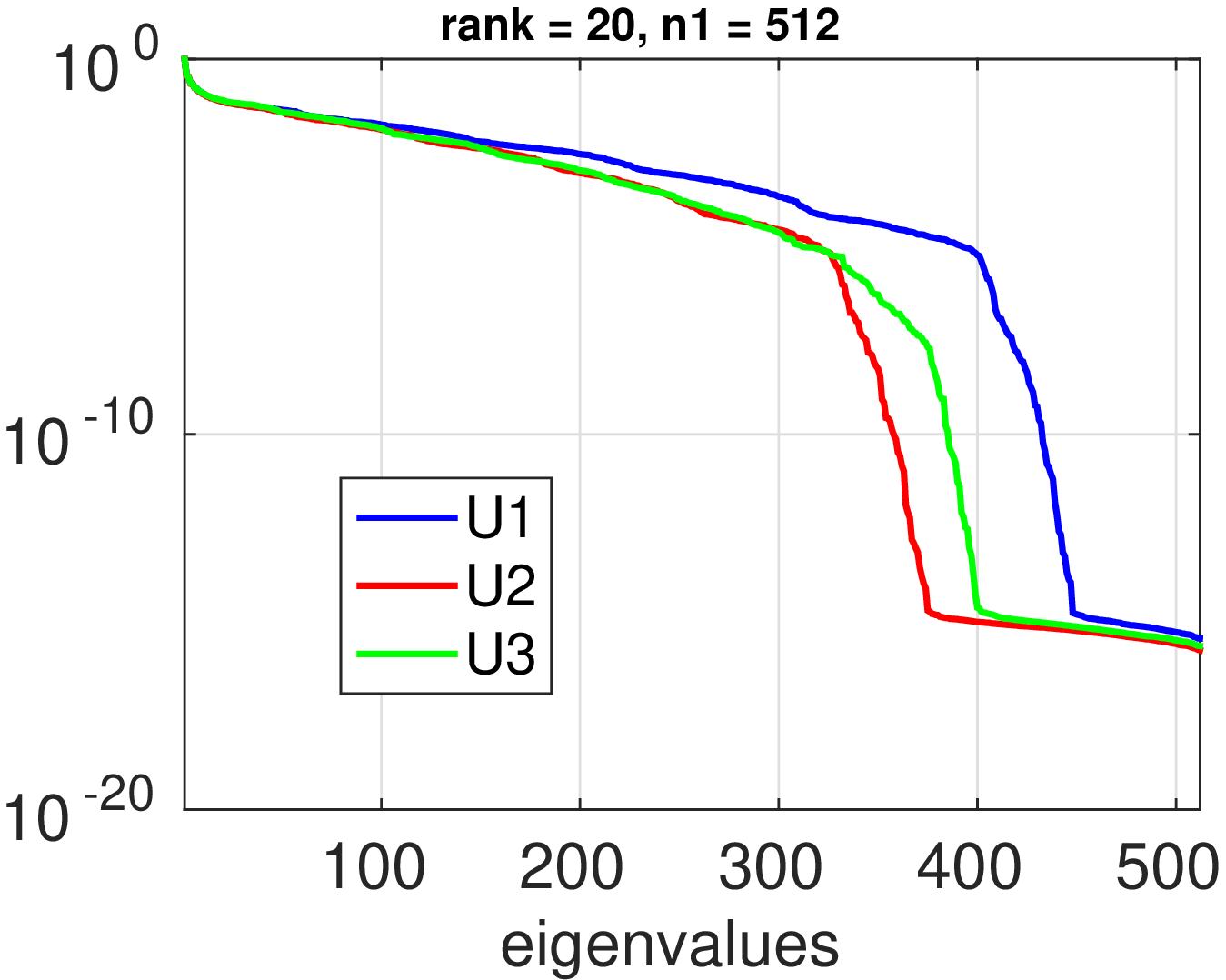}
\caption{The directional Tucker ranks computed by RHOSVD for a protein-type system with n=1024 (left) 
and n=512 (right). }
\label{fig:Full_Rank}
\end{figure}

Given the generating kernel $p(\|x\|)$, we consider the problem of efficient calculating
the weighted sum of a large number of single potentials located in a set ${\cal S}$ of 
separable distributed  points (sources), 
$x_\nu \in \mathbb{R}^3$, $\nu=1,...,N$, embedded into the fixed bounding box $\Omega=[-b,b]^3$,
\begin{equation}\label{eqn:PotSum}
 P_0(x)= \sum_{\nu=1}^{N} {z_\nu}\,p({\|x-x_\nu\|}), \quad z_\nu \in \mathbb{R}.
\end{equation}
 The function $p(\|x\|)$ is allowed to have slow polynomial decay in $1/\|x\|$ so that
  each individual source contributes essentially to the total potential at each point in $\Omega$.
  
\begin{definition}\label{def:QU_Distrib} (Well-separable point distribution).
Given a constant $\sigma_\ast>0$, a set ${\cal S}=\{x_\nu\}$ of points in $\mathbb{R}^d$ 
is called $\sigma_\ast$-separable if there holds
\begin{equation} \label{eqn:Qunif_points}
 d(x_\nu,s_{\nu'}):=\|x_\nu - s_{\nu'} \|\geq \sigma_\ast \quad \mbox{for all} \quad \nu\neq \nu'.
\end{equation}
A family of point sets $\{{\cal S}_{1},...,{\cal S}_{m}\}$,  is called uniformly 
$\sigma_\ast$-separable
if (\ref{eqn:Qunif_points}) holds for every set ${\cal S}_{m'}$, ${m'}=1,2,...,m$ independently 
of the number of particles in a set, $\#{\cal S}_{m'}$.
\end{definition}
Condition (\ref{eqn:Qunif_points}) can be reformulated in terms of the so-called separation
distance $q_{\cal S}$ of the point set ${\cal S}$
\begin{equation} \label{eqn:SepDist}
q_{\cal S}:= \min_{s\in {\cal S}} \min_{x_\nu\in {\cal S}\setminus s} d(x_\nu,s) \geq \sigma_\ast.
\end{equation}

Definition \ref{def:QU_Distrib} on 
separability of point distributions is fulfilled, in particular, 
in the case of large molecular systems (proteins, crystals, polymers, nano-clusters),
where all atomic centers are strictly separated
from each other by a certain fixed \emph{inter-atomic distance}. 
The same happens for lattice-type structures, where each atomic cluster within the unit cell 
is separated from the neighbors  by a distance proportional to the lattice step-size.

\begin{figure}[htb]
\centering
\includegraphics[width=7.0cm]{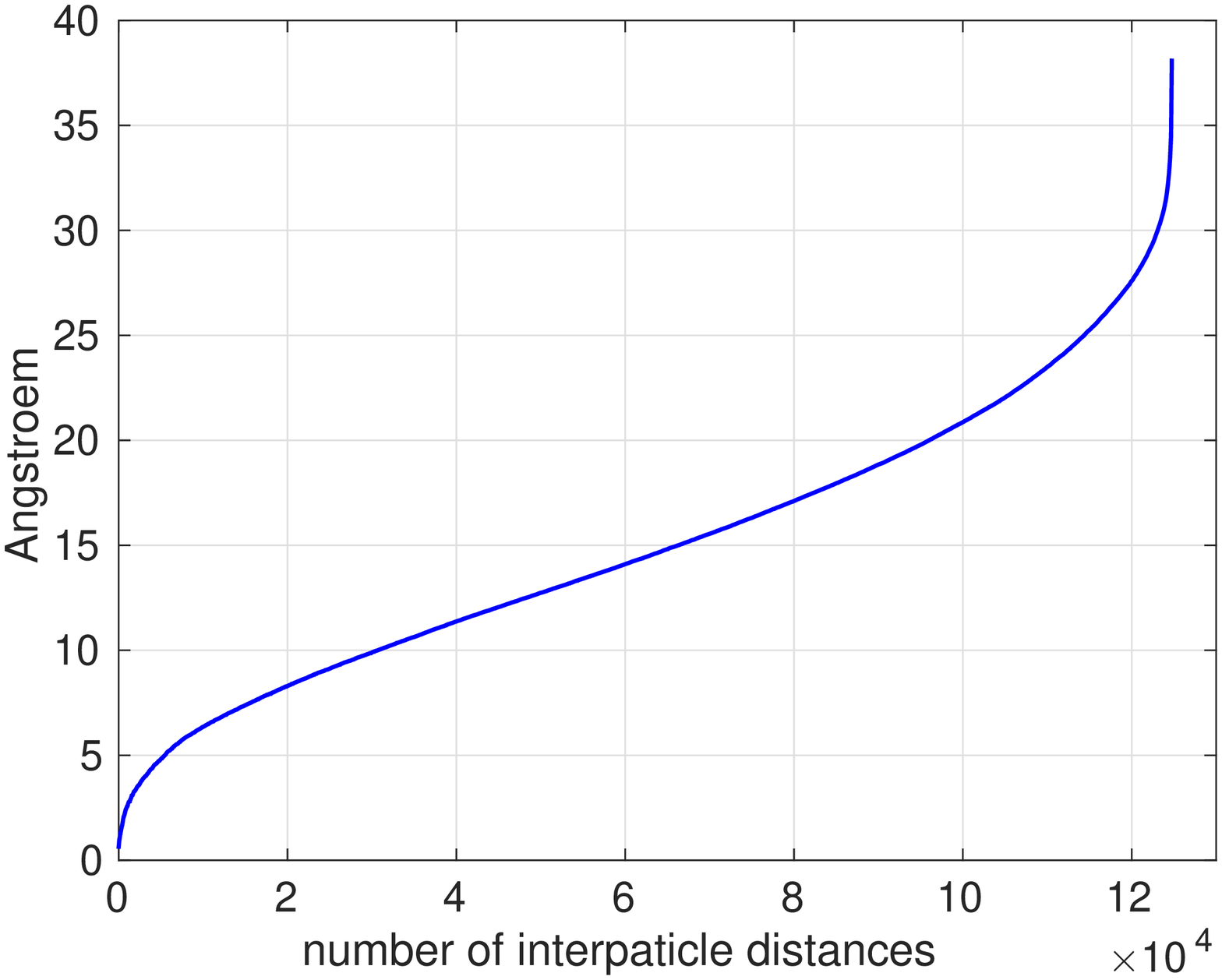} 
\includegraphics[width=7.0cm]{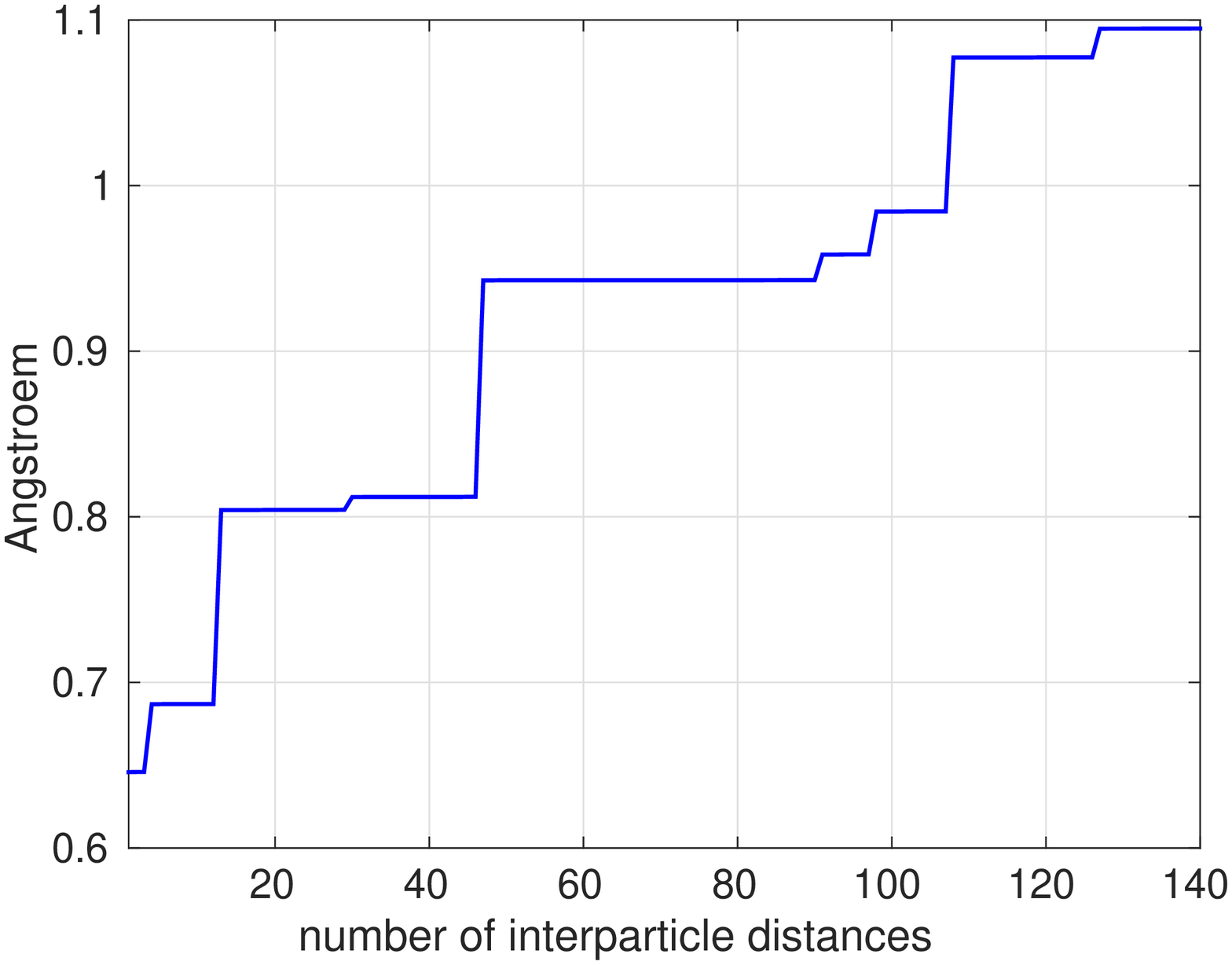}
\caption{Inter-particle distances in a ascendant order for protein-type structure with 
500 particles (left), zoom for the first 100 smallest inter-particle  distances (right) .}
\label{fig:1024_dist}
\end{figure}

Figure \ref{fig:1024_dist} (left) shows inter-particle distances in ascending order
for a protein-type structure with 500 particles. 
The total number of distances equals to $N(N-1)/2$, where $N$ is the number of particles.
Figure \ref{fig:1024_dist} (right) indicates that the number of particles with small
inter-particle distances is very moderate. In particular, for this example the number of pairs with
interparticle distances less than $1$\AA{} is about $0.04\, \%$ ($\approx 110)$) 
of the total number of $2,495 \cdot 10^5$ distances. 


In the following, for ease of presentation, we confine ourselves to the case
of electrostatic potentials described by the Newton kernel $p(\|x\|)=\frac{1}{\|x\|}$.

\subsection{Low-rank representation to the sum of long-range terms} \label{ssec:Sum_LongRange}

First, we describe the tensor summation method for calculation of the 
collective potential of a multi-particle system that includes only 
the long-range contribution from the generating kernel. We introduce 
the $n\times n \times n$ rectangular grid $\Omega_n$ in $\Omega=[-b,b]^3$, see \S\ref{ssec:Coulomb},
as well as the auxiliary $2n\times 2n \times 2n$ grid on the accompanying domain 
$\widetilde{\Omega}=2 \Omega$ of  double size. 
The canonical rank-$R$ representation of the Newton kernel projected onto the
$n\times n \times n$ grid is denoted by ${\bf P}_R\in \mathbb{R}^{n \times n \times n}$, 
see (\ref{eqn:sinc_general}).

Consider the splitting (\ref{eqn:Split_Tens})
applied to the reference canonical tensor ${\bf P}_R$ 
and to its accompanying version $\widetilde{\bf P}_R=[\widetilde{p}_R(i_1,i_2,i_3)]$, 
$i_\ell \in I_\ell$, $\ell=1,2,3$, such that
\[
 \widetilde{\bf P}_R = \widetilde{\mathbf{P}}_{R_s} + \widetilde{\mathbf{P}}_{R_l} 
 \in \mathbb{R}^{2n \times 2n \times 2n}.
\]
For technical reasons, we further assume that the tensor grid $\Omega_n$ is fine enough such that
all charge centers ${\cal S}=\{x_\nu\}$ specifying the total electrostatic potential 
in (\ref{eqn:PotSum}) belong to the set of grid points, i.e., 
$x_\nu=(x_{\nu,1}, x_{\nu,2}, x_{\nu,3} )^T =h(j^{(\nu)}_1,j^{(\nu)}_2,j^{(\nu)}_3)^T\in \Omega_h$ 
with some indices $1\leq j^{(i)}_1,j^{(i)}_2,j^{(i)}_3 \leq n$.

The total electrostatic potential $P_0(x)$ in (\ref{eqn:PotSum}) is represented by a projected tensor 
${\bf P}_0\in \mathbb{R}^{n \times n \times n}$ that can 
be constructed by a direct sum of shift-and-windowing transforms of the reference 
tensor $\widetilde{\bf P}_R$ (see \cite{VeBoKh:Ewald:14} for more details),
\begin{equation}\label{eqn:Total_Sum}
 {\bf P}_0 = \sum_{\nu=1}^{N} {z_\nu}\, {\cal W}_\nu (\widetilde{\bf P}_R)=
 \sum_{\nu=1}^{N} {z_\nu} \, {\cal W}_\nu (\widetilde{\mathbf{P}}_{R_s} + \widetilde{\mathbf{P}}_{R_l})
 =: {\bf P}_s + {\bf P}_l.
\end{equation}
The shift-and-windowing transform ${\cal W}_\nu$ maps a reference tensor 
$\widetilde{\bf P}_R\in \mathbb{R}^{2n \times 2n \times 2n}$ onto its sub-tensor 
of smaller size $n \times n \times n$, obtained by first shifting the center of
the tensor $\widetilde{\bf P}_R$ to the point $x_\nu$ and then tracing (windowing) the result 
onto the domain $\Omega_n$:
\[
 {\cal W}_\nu: \widetilde{\bf P}_R \mapsto {\bf P}^{(\nu)}=[p^{(\nu)}_{i_1,i_2,i_3}], \quad 
 p^{(\nu)}_{i_1,i_2,i_3}:= \widetilde{p}_R(i_1+j^{(\nu)}_1,i_2+j^{(\nu)}_2,i_3+j^{(\nu)}_3), 
 \quad i_\ell\in I_\ell.
\]
The point is that the  Tucker rank of the full tensor sum ${\bf P}_0$
increases almost proportionally to the number $N$ of particles in the system, 
see Figure \ref{fig:SVD_Full_Sum}, representing singular values of the side matrix in the canonical
tensor ${\bf P}_0$. On the other hand, the canonical rank of the tensor ${\bf P}_0$ shows up 
the pessimistic bound $\leq R\, N$. 

To overcome this difficulty,
in what follows, we consider the global tensor decomposition of only the  
"long-range part" in the tensor ${\bf P}_0$, defined by
\begin{equation}\label{eqn:Long-Range_Sum} 
 {\bf P}_l = \sum_{\nu=1}^{N} {z_\nu} \, {\cal W}_\nu (\widetilde{\mathbf{P}}_{R_l})=
 \sum_{\nu=1}^{N} {z_\nu} \, {\cal W}_\nu 
 (\sum\limits_{k\in {\cal K}_l} \widetilde{\bf p}^{(1)}_k \otimes \widetilde{\bf p}^{(2)}_k 
 \otimes \widetilde{\bf p}^{(3)}_k).
\end{equation}
The initial canonical rank of the tensor ${\bf P}_l$ equals to $R_l\, N$, and, again, 
it may increase dramatically for a large number of particles $N$. 
Since by construction the tensor ${\bf P}_l$ approximates 
rather smooth function on the domain $\Omega$, one may expect that the large initial rank
can be reduced considerably to some value $R_\ast$ that remains almost independent of $N$. 
The same beneficial property can be expected for the Tucker rank of ${\bf P}_l$. 
The principal ingredient of our tensor approach is the rank 
reduction in the initial canonical sum ${\bf P}_l$
by application of the multigrid accelerated canonical-to-Tucker transform \cite{khor-ml-2009}.

\begin{figure}[htb]
\centering
\includegraphics[width=7.2cm]{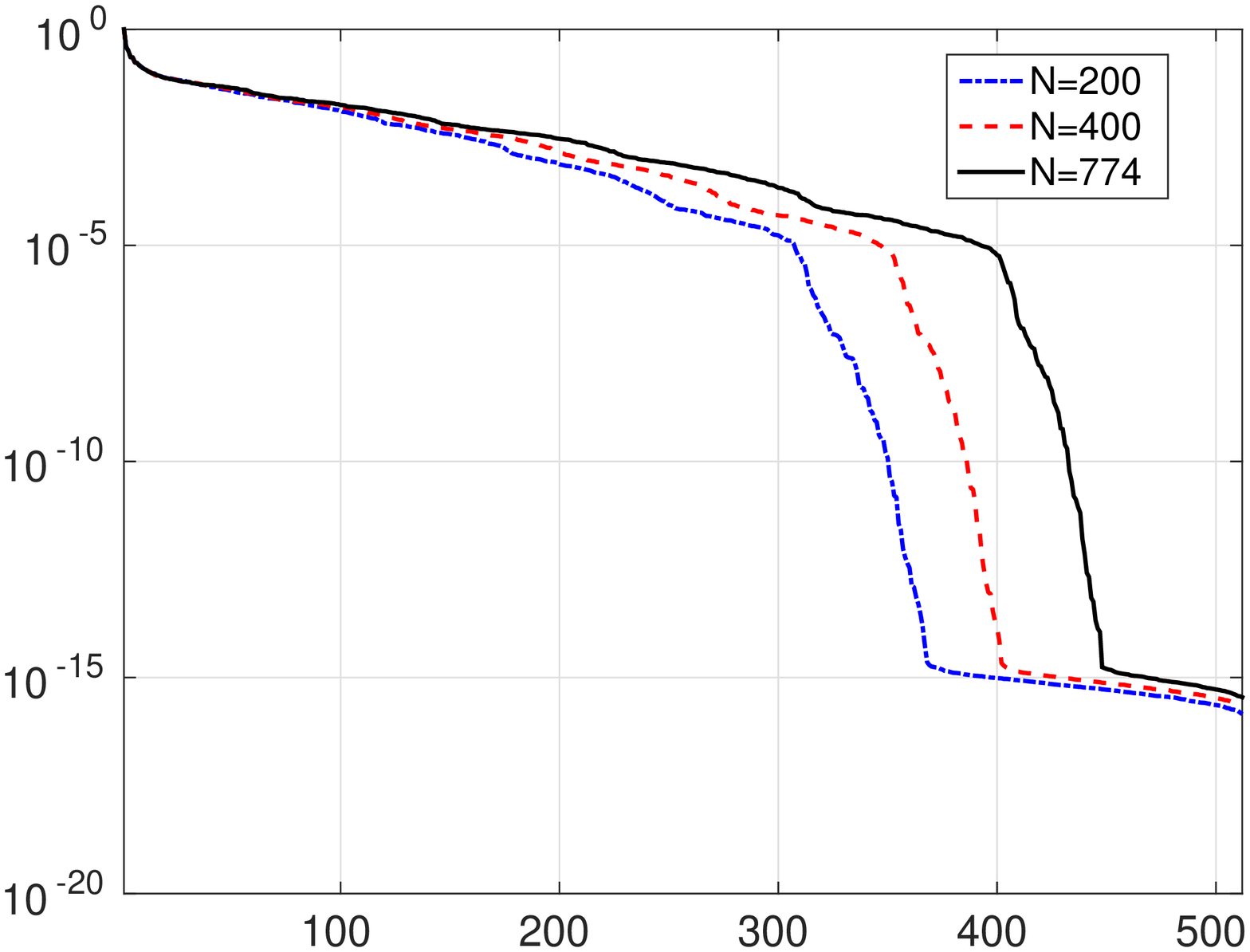} 
\includegraphics[width=7.2cm]{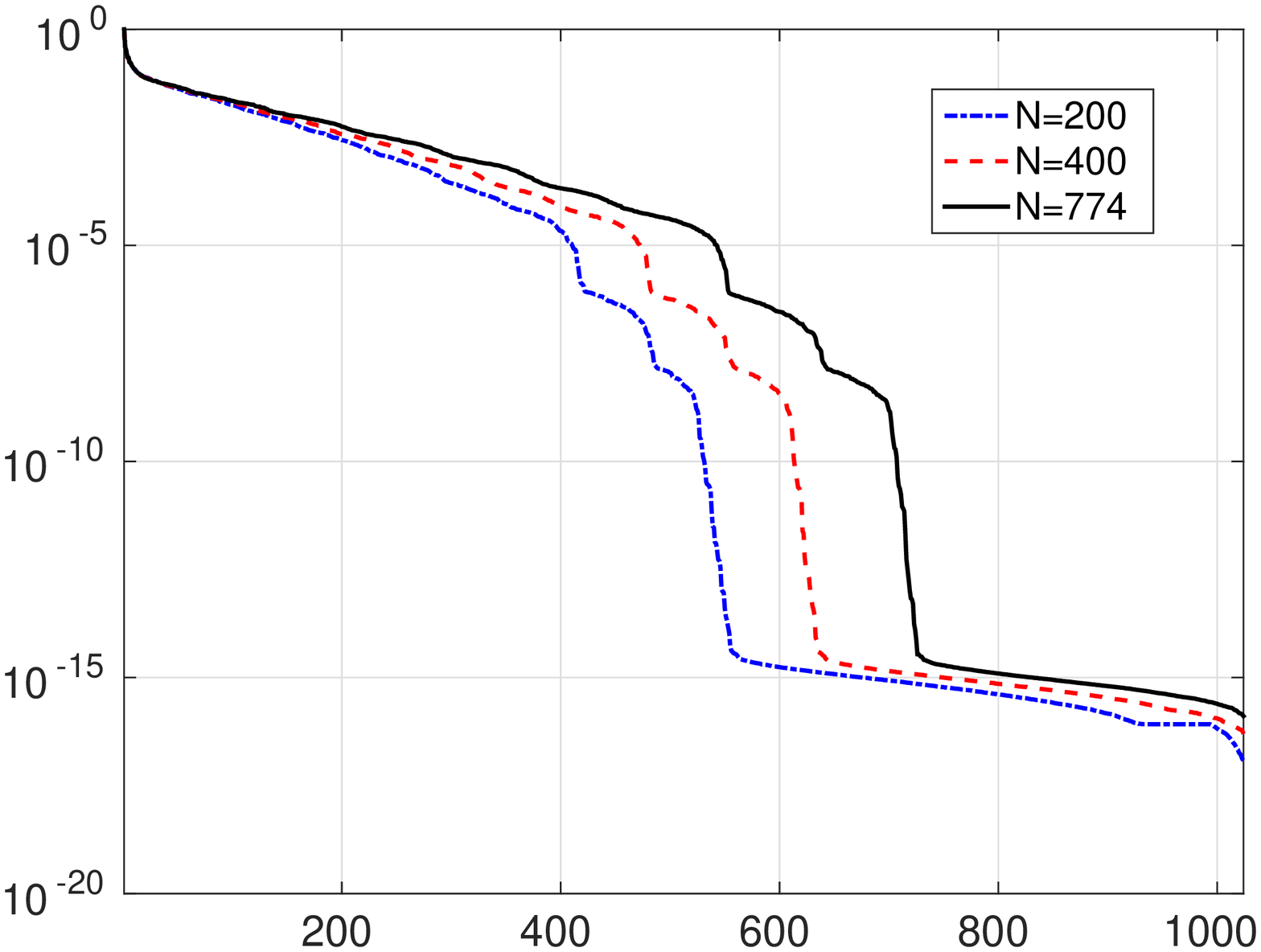}
\caption{\emph{\small  Mode-$1$ singular values of the side matrix in the full potential 
sum vs. the number of particles $N=200,400,774$ and grid-size $n$: 
$n=512$ (left), $n=1024$ (right).}}
\label{fig:SVD_Full_Sum}
\end{figure}

To simplify the exposition, we suppose that the tensor entries in ${\bf P}_l$ are computed by 
collocation of Gaussian sums at the centers of the grid-cells. This provides the representation
which is very close to that obtained by (\ref{eqn:sinc_general}).

The following theorem proves the important result justifying the efficiency of range-separated
formats applied to a class of radial basis functions $p(r)$: the Tucker $\varepsilon$-rank of the 
long-range part in accumulated sum of potentials computed in the bounding box $\Omega=[-b,b]^3$
remains almost uniformly bounded in the number of particles $N$ 
(but depends on the size $b$ of the domain).

\begin{theorem}\label{thm:Rank_LongRange}
Let the long-range part ${\bf P}_l$ in the total interaction potential, see (\ref{eqn:Long-Range_Sum}),
correspond to the choice of splitting parameter 
in (\ref{eqn:ChoiceR_l}) with $M=O(\log^2\varepsilon)$.
Then the total $\varepsilon$-rank ${\bf r}_0$ of the Tucker approximation to the canonical tensor sum ${\bf P}_l$
is bounded by
\[
 |{\bf r}_0|:=rank_{Tuck}({\bf P}_l)=C\, b \,\log^{3/2} (|\log (\varepsilon/N)|),
\] 
where the constant $C$ does not depend on the number of particles $N$.
\end{theorem}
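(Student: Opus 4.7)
The proof reduces to bounding the mode-$\ell$ Tucker $\varepsilon$-rank of $\mathbf{P}_l$ for each $\ell \in \{1,2,3\}$, which is the $\varepsilon$-dimension of the linear span of the vectors $\widetilde{\mathbf{p}}^{(\ell)}_{k,\nu}$ with $k \in \mathcal{K}_l$ and $\nu = 1, \ldots, N$, each being the grid restriction of a shifted Gaussian $e^{-t_k^2(x_\ell - x_{\nu,\ell})^2}$ on $[-b,b]$. The guiding observation is that, for any fixed width parameter $t_k$, all $N$ shifts $\{e^{-t_k^2(x-c)^2} : c \in [-b,b]\}$ belong to a finite-dimensional function space whose effective $\varepsilon$-dimension is controlled by $b$ and $t_k$ alone (the number of particles $N$ only enters through a tolerance adjustment).

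For fixed $k \in \mathcal{K}_l$, I would factor each shifted Gaussian as $e^{-t_k^2 x^2}\, e^{-t_k^2 c^2}\, e^{2 t_k^2 x c}$ and expand the coupling factor $e^{2 t_k^2 x c}$ in a truncated Taylor or Chebyshev series in the shift parameter $c$ on $[-b,b]$. Standard tail estimates for exponential series then show that every shift lies within tolerance $\varepsilon/N$ of a common subspace of dimension $r_k \lesssim b\, t_k + \sqrt{\log(N/\varepsilon)}$, with no further dependence on $N$. Since $\mathcal{K}_l$ has cardinality $R_l = O(M)$ and $t_k \leq t_{R_l} = O(\log M)$ by the quadrature parameters in \eqref{eqn:hM}, the assumption $M = O(\log^2 \varepsilon)$ yields $t_{R_l} = O(\log|\log(\varepsilon/N)|)$ after the tolerance is absorbed into $\varepsilon/N$.

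The main obstacle is that a naive summation of the $r_k$ over $k \in \mathcal{K}_l$ produces a bound of order $R_l \cdot r_{R_l}$, which is polynomial in $|\log \varepsilon|$ rather than polylogarithmic in $|\log(\varepsilon/N)|$. To reach the claimed estimate $C\,b\, (\log|\log(\varepsilon/N)|)^{3/2}$, one must argue that the subspaces associated with different widths $t_k$ are strongly \emph{nested}: shifts of a narrower Gaussian (larger $t_k$) already lie, up to accuracy $\varepsilon/N$, within the span of shifts of the widest one, so the total span has dimension of order $r_{R_l}$ rather than $\sum_k r_k$. This nesting is natural in view of the smoothness of the aggregated envelope $p_l(r) \approx \operatorname{erf}(t_{R_l}\|x\|)/\|x\|$, which is band-limited with effective bandwidth $t_{R_l}$, and of the reproducing-kernel-type structure of the Gaussian family. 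The residual $\sqrt{\log|\log(\varepsilon/N)|}$ factor yielding the $3/2$ exponent then arises from the Gaussian bandwidth that must be retained to resolve all $N$ shifts simultaneously to the target accuracy, combined with the linear factor $b\, t_{R_l}$ from the one-dimensional effective dimension.
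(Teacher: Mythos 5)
Your setup --- bounding each mode-$\ell$ Tucker rank by the $\varepsilon$-dimension of the span of all shifted long-range Gaussians, with the tolerance refined to $\varepsilon/N$ to absorb the union over particles --- is the same as the paper's. But the step you yourself flag as ``the main obstacle'' is exactly where your argument breaks, and the fix you propose does not work as stated. The per-width subspaces you build by expanding the coupling factor $e^{2t_k^2xc}$ are spanned by $\{x^j e^{-t_k^2x^2}\}_j$ and hence genuinely depend on $k$; to avoid the $\sum_k r_k$ blow-up you assert a ``nesting'' in which shifts of a \emph{narrower} Gaussian (larger $t_k$) lie in the span of shifts of the \emph{widest} one. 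That containment is backwards: the span of shifts of a Gaussian of width parameter $p_k$ is, up to $\varepsilon$, a space of functions band-limited to frequencies $|\omega|\lesssim 1/p_k=\sqrt{2}\,t_k$, so the widest member of ${\cal T}_l$ (smallest $t_k$, nearly constant) cannot reproduce the higher-frequency content of the narrower members to accuracy $\varepsilon/N$. The containment you actually need --- that all long-range widths are captured by a single space whose bandwidth is set by the \emph{narrowest} long-range Gaussian, $k=R_l$ --- is left entirely to a heuristic about ``reproducing-kernel-type structure,'' which is not a proof. (A secondary quantitative slip: a raw Taylor truncation of $e^{2t_k^2xc}$ over $|x|,|c|\le b$ requires on the order of $t_k^2b^2$ terms, not $b\,t_k+\sqrt{\log(N/\varepsilon)}$; the Nyquist-type count $\sim b\,t_k$ does not come for free from that expansion.)

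The paper avoids the whole issue by choosing the common subspace to be $k$-independent from the outset: every $G_{p_k}(x-x_\nu)$ is expanded in the fixed trigonometric system $\{\cos(\pi m x/b),\,\sin(\pi m x/b)\}_{m\le m_0}$ on $[-b,b]$. This basis has precisely the two properties your construction lacks. First, shift-invariance: each shift of $\cos(\pi m x/b)$ stays in the two-dimensional space of frequency $m$, so the union over the $N$ centers only doubles the number of terms. Second, the Fourier coefficients $\alpha_m\sim p\,e^{-\pi^2m^2p^2/(2b^2)}$ decay faster for larger $p$, so the truncation level $m_0=O\bigl(\tfrac{b}{p_{R_l}}\log^{1/2}(\cdot)\bigr)$ dictated by the worst (narrowest) long-range Gaussian automatically suffices for all $k\in{\cal T}_l$; combined with $1/p_{R_l}=O(\log|\log\varepsilon|)$ from (\ref{eqn:Choice_t_k}) and the refinement $\varepsilon\mapsto\varepsilon/N$, this yields the claimed $O\bigl(b\log^{3/2}|\log(\varepsilon/N)|\bigr)$ bound, since summation in a fixed basis only changes the Tucker core. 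To repair your proof you would have to establish the band-limitation statement for the entire long-range family simultaneously, which is the paper's Fourier argument in disguise; as it stands, the key combination step is missing and the nesting claim on which it rests is false in the stated direction.
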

\begin{proof} 
We consider the Gaussian in normalized form 
$G_p(x)=\mathrm{e}^{-\frac{x^2}{2p^2}}$ so that the relation  
$\mathrm{e}^{-t_k^2{x^2}}  =\mathrm{e}^{-\frac{x^2}{2p^2}} $ holds, 
i.e. we set, see (\ref{eqn:hM}),
$$
t_k=\frac{1}{\sqrt{2}p_k}, \quad \mbox{with}\;\;
t_k= k \mathfrak{h}_M, \quad k=0,1,...,M,
$$ 
where $\mathfrak{h}_M=C_0 \log M/M$.
Now criterion (B) in (\ref{eqn:Split_crit_L2}) on the bound of the $L^1$-norm reads
$$
a_k \int_{a}^{\infty} \mathrm{e}^{-\frac{x^2}{2p_k^2}} \leq \frac{\varepsilon}{2} <1, 
\quad a_k=\mathfrak{h}_M.
$$

Now, we sketch the proof to the following steps.  (A)  We represent all shifted Gaussian 
functions, contributing to the total sum, in the fixed set of basis functions 
by using truncated Fourier series. 
 (B)  We prove that, on the "long-range" index set $k\in {\cal T}_l$, 
the parameter $p_k$ remains uniformly bounded in $N$ from below, 
implying the uniform bound on the number of terms in the $\varepsilon$-truncated Fourier series. 
 (C)  The summation of functions presented in the fixed Fourier basis set 
does not enlarge the Tucker rank, but only effects the Tucker core.
The dependence on size of computational domain $b$ remains in the explicit form.

Specifically, let us consider the rank-$1$ term  in the splitting (\ref{eqn:Split_Tens}) with maximal 
index $k\in {\cal T}_l$. Taking into account the asymptotic choice $M=\log^2\varepsilon$, 
see (\ref{sinc_conv}), where $\varepsilon>0$ is the accuracy of the sinc-quadrature,
the relation (\ref{eqn:ChoiceR_l}) implies
\begin{equation} \label{eqn:Choice_t_k}
 \max_{k\in {\cal T}_l}  t_k = R_l \mathfrak{h}_M = \frac{M}{2} C_0 \log(M)/M 
 \approx \log(M) =2 \log(|\log(\varepsilon)|).
\end{equation}
Now we consider the Fourier transform of the univariate
Gaussian on $[-b,b]$,
$$
G_p(x)= \mathrm{e}^{-\frac{x^2}{2p^2}} = \sum\limits_{m=0}^M \alpha_m \cos\left(\dfrac{\pi m x}{b}\right) 
 + \eta, \quad \mbox{with}\quad 
 |\eta| = \left|\sum\limits_{m=M+1}^{\infty} \alpha_m \cos\left(\dfrac{\pi m x}{b}\right)\right| < \varepsilon,
 $$
 where
 $$
 \alpha_m = \dfrac{\int\limits_{-b}^b \mathrm{e}^{-\frac{x^2}{2p^2}} 
 \cos\left(\dfrac{\pi m x}{b}\right) dx}{|C_m|^2},
 \quad \mbox{with}\quad |C_m|^2=\int_{-b}^b \cos^2\left(\dfrac{\pi m x}{b} \right) dx
 =\left\{
   \begin{array}{ll}
    2b, & \mbox{if}~m=0, \\
    b, & \mbox{otherwise.}
   \end{array} \right.
 $$
Following arguments in \cite{DKhOs-parabolic1-2012} one obtains
$$
 \alpha_m = \left(p\mathrm{e}^{-\frac{\pi^2 m^2 p^2}{2a^2}} -\xi_m \right) / |C_m|^2, 
 \quad \mbox{where}\quad  0<\xi_m<\varepsilon.
  $$
Truncation of the coefficients $\alpha_m$ at $m=m_0$ such that 
 $\alpha_{m_0}\leq \varepsilon$, leads to the bound
  $$
 m_0 \geq \frac{\sqrt{2}}{\pi} \frac{b}{p} \log^{0.5} \left(\frac{p}{(1+|C_M|^2) \varepsilon} \right) 
 = \frac{\sqrt{2}}{\pi} \frac{b}{p} \log^{0.5} \left(\frac{p}{1+b}\frac{1}{\varepsilon}\right).
 $$
 On the other hand (\ref{eqn:Choice_t_k}) implies 
 \[ 
  1/p_k \leq c \log(|\log\varepsilon|), \quad k\in {\cal T}_l, 
  \quad \mbox{i.e.} \quad 1/p_{R_l}\approx \log(|\log\varepsilon|),
 \]
that ensures the estimate on $m_0$,
\begin{equation} \label{eqn:Choice_m0}
 m_0 =O(b \, \log^{3/2}(|\log\varepsilon|)).
\end{equation}
Now following \cite{VeBoKh:Ewald:14}, we represent the Fourier transform of the shifted Gaussians
by
\[
 G_p(x-x_\nu) = \sum\limits_{m=0}^M \alpha_m \cos\left(\dfrac{\pi m (x-x_\nu)}{b}\right)+
 \eta_\nu,\quad 
 |\eta_\nu |  < \varepsilon,
\]
which requires only the double number of trigonometric terms compared with the single Gaussian
analyzed above. To compensate the possible increase in $|\sum_\nu \eta_\nu|$, we refine 
$\varepsilon \mapsto \varepsilon/N$.
These  estimates also apply to all Gaussian functions presented in the long-range sum
since for $k\in {\cal T}_l$ they have larger values of $p_k$ than $p_{R_l}$. Indeed, 
in view of (\ref{eqn:ChoiceR_l}) the number of summands in the long-range part 
is of the order $R_l=M/2=O(\log^2\varepsilon)$. 
Combining these arguments with (\ref{eqn:Choice_m0}) proves the resulting estimate.  
\end{proof}

Figure \ref{fig:Fourier_Short_Long} illustrates fast decay of the Fourier coefficients
for the "long-range" discrete Gaussians sampled on $n$-point grid (left) and  
a slow decay of Fourier coefficients for the "short-range" Gaussians (right).
In the latter case, almost all coefficients remain essential, 
resulting in the full rank decomposition. The grid size is chosen as $n=1024$.

\begin{figure}[htb]
\centering
\includegraphics[width=7.2cm]{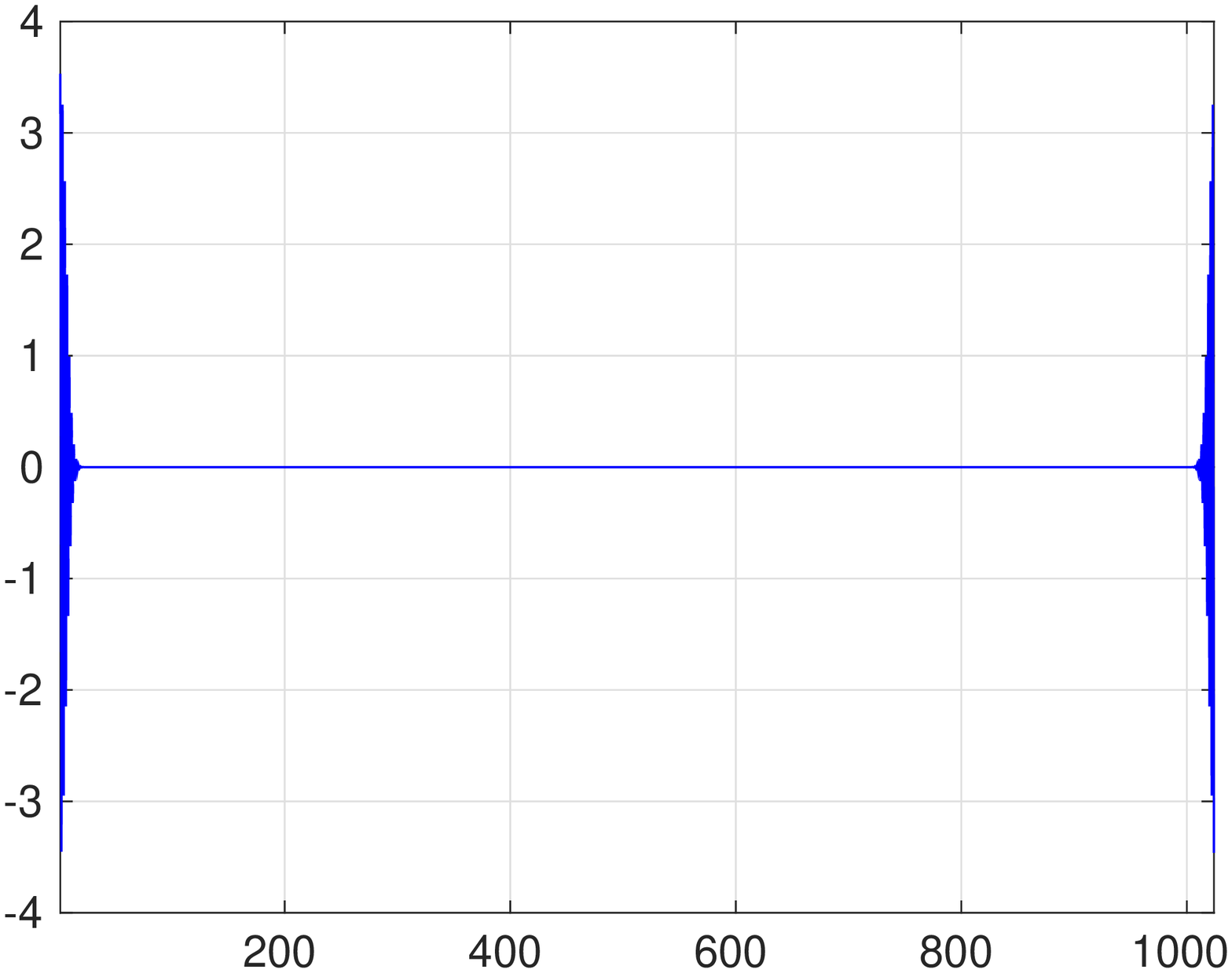} 
\includegraphics[width=7.2cm]{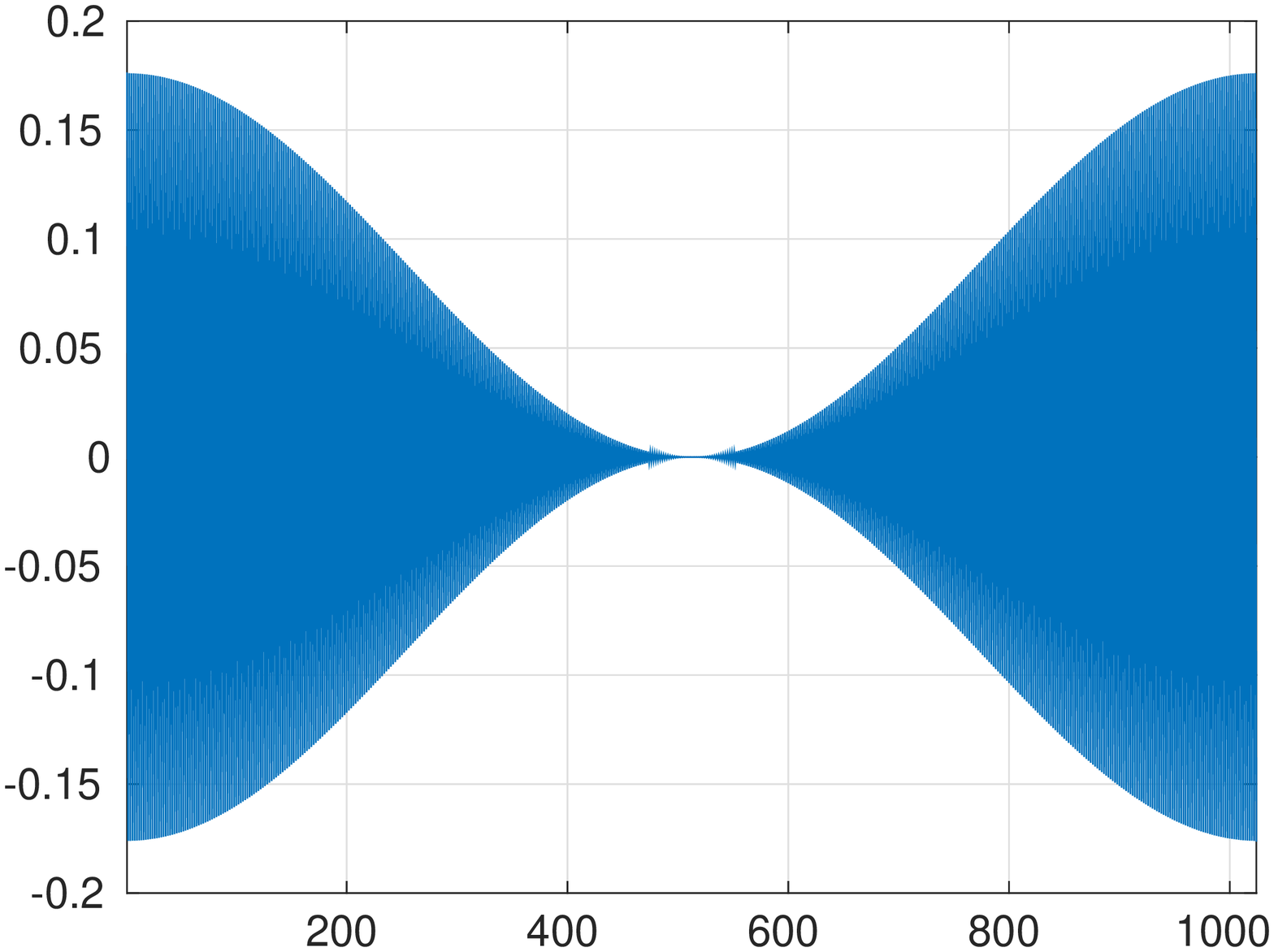}
\caption{\emph{\small Fourier coefficients of the long- (left) and short-range (right)
discrete Gaussians.}}
\label{fig:Fourier_Short_Long}
\end{figure}

\begin{remark}\label{rem:RankTuck_N0}
 Notice that for fixed $\sigma>0$ the $\sigma$-separability of the point distributions 
 (see Definition \ref{def:QU_Distrib})  implies that the volume size
 of the computational box $[-b,b]^3$ should increase proportionally to the number of particles $N$,
 i.e., $b=O(N^{1/3})$. Hence, Theorem \ref{thm:Rank_LongRange} indicates that 
 since $r_l=O(b)$ the number of entries in
 the Tucker core of size $r_1 \times r_2 \times r_3$ can be estimated by $C N$. This asymptotic 
 cost remains of the same order in $N$ as that for the short-range part in the potential sum.
\end{remark}

Figure \ref{fig:SVD_Long_Sum}, left, illustrates that the singular values of side matrices
(i.e. bounds on the Tucker rank)
for the long-range part (by choosing $R_l=12$)  exhibit fast exponential decay with a 
rate independent of the number of particles $N=214,405$, and $754$ 
(cf. Figure \ref{fig:SVD_Full_Sum}). 
Figure \ref{fig:SVD_Long_Sum}, right, zooms into the first $50$ singular values which are almost 
identical for the different values of $N$. The fast decay in these singular values guarantees  the 
low-rank RHOSVD-based Tucker decomposition of the long-range part in the potential 
sum (see Appendix).

\begin{figure}[htb]
\centering
\includegraphics[width=7.0cm]{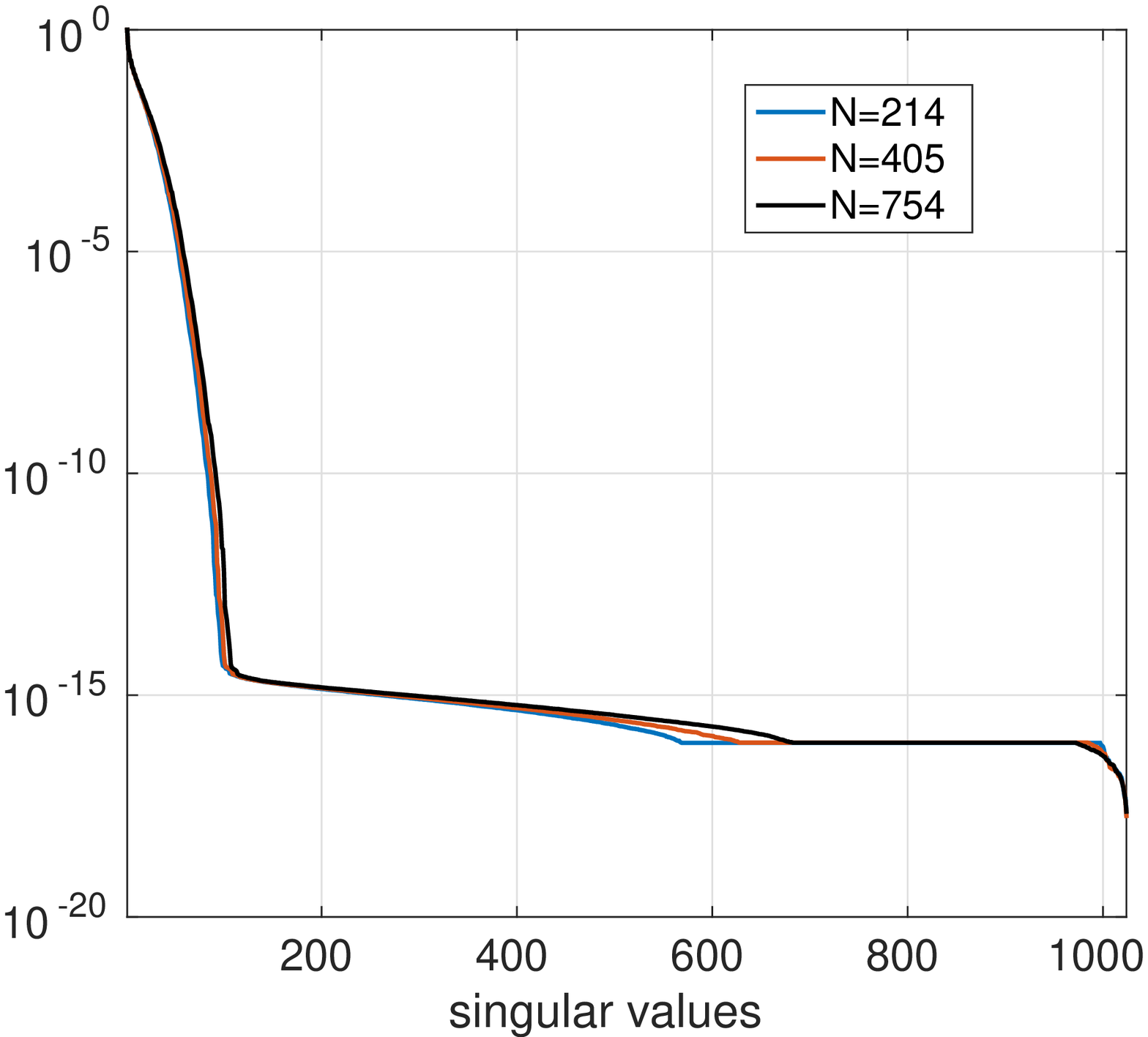} 
\includegraphics[width=7.0cm]{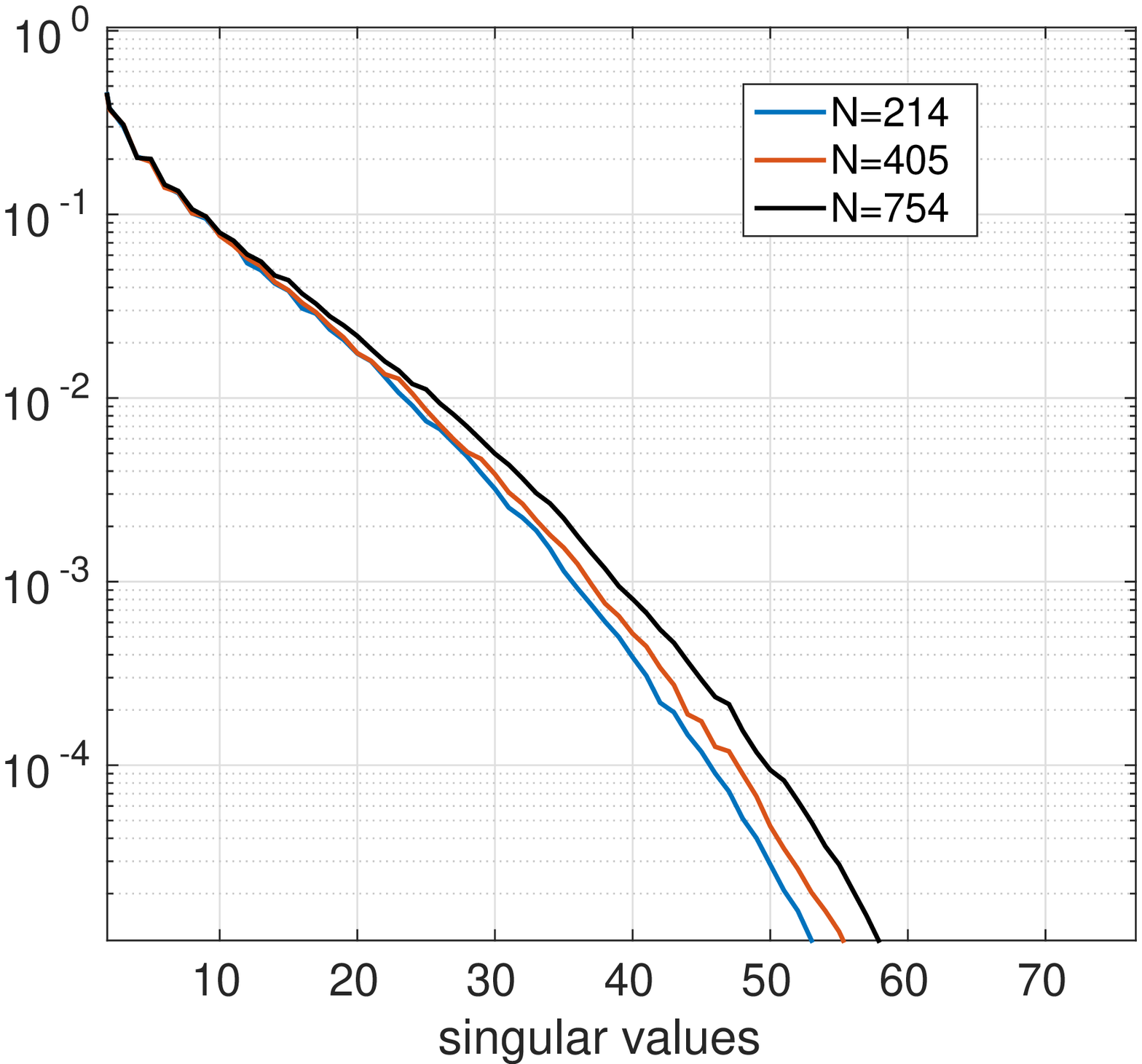}
\caption{\emph{\small Mode-$1$ singular values of side matrices for the long range part 
($R_l=12$) in the total potential vs. the number of particles $N$.}}
\label{fig:SVD_Long_Sum}
\end{figure}
\begin{table}[tbh]
\begin{center}\footnotesize
\begin{tabular}
[c]{|r|r|r|r|r|r|r| }%
\hline
  & $N$      &  $200$     & $400$     & $782$    & $1728$  & $4096$ \\
\hline   
 $R_\ell/R_s $  & Ranks full can. &  4200 & 8400 & 16422 & 32288 & 86016  \\
 \hline  \hline  
9/12   & Ranks long range &  1800 & 3600 & 7038 & 15552 & 36864  \\
 \cline{2-7}
 & RS-Tucker ranks  & 21,16,18 & 22,19,23  & 24,22,24 & 23,24,24 & 24,24,24  \\
 \cline{2-7} 
 & RS-canonical rank & 254 & 292  & 362 & 207 & 243  \\
 \cline{2-7}
   \hline\hline
 10/11 & Ranks long range &  2000 & 4000 & 7820 & 17280 & 40960  \\
 \cline{2-7}
 & RS-Tucker ranks  & 30,22,25 & 32,25,33  & 36,32,34 & 25,25,25 & 29,29,29  \\
 \cline{2-7} 
 & RS-canonical rank & 476 & 579  & 768 & 286 & 426  \\
 \cline{2-7}
    \hline  
 \end{tabular}
\caption{\emph{\small Tucker ranks and the RS canonical rank of the  multiparticle potential sum  
  versus the number of particles $N$ for varying parameters $R_\ell$ and $R_s$ for 
  the grid size $n^3=1024^3$.} 
}
\label{Tab:Low_Tuck_ranks}
\end{center}
\end{table}

Table \ref{Tab:Low_Tuck_ranks} shows the Tucker ranks of sums of long-range  ingredients
in the electrostatic potentials for the $N$-particle clusters. 
The Newton kernel is generated on the grid with $n^3=1024^3$
in the computational box of size $b^3=40^3$\AA{}, with accuracy
$\varepsilon= 10^{-4}$ and canonical rank $21$.
Particle clusters with 200, 400 and 782 atoms are taken as a part of protein-like multiparticle system.
The clusters of size $1728$ and $4096$ correspond to the lattice structures 
of sizes $12\times 12\times 12$ and 
$16\times 16\times 16$,  with randomly generated charges. The line  ``RS-canonical rank''
shows the resulting rank after the canonical-to-Tucker and Tucker-to-canonical transform, with
$\varepsilon_{C2T}=4\cdot 10^{-5}$ and $\varepsilon_{T2C}=4\cdot10^{-6}$.

\begin{table}[tbh]
\begin{center}\footnotesize
\begin{tabular}
[c]{|l|r|r|r|r|r|r|r|}%
\hline
$N$ /$R_l$ & $8$ & $9$   & $10$      & $11$ & $12$      & $13$ \\
\hline
200  & 10,10,11 & 13,12,12 & 18,15,16 & 23,19,21 & 32,24,27 & 42,30,34 \\
\hline
400  & 11,10,11 & 14,13,14 & 19,16,20 & 26,21,26 & 35,27,36 & 47,34,47 \\
\hline
782  & 11,11,12 & 15,14,15 & 20,18,20 & 28,26,27 & 39,35,37 & 52,46,50 \\
\hline
 \end{tabular}
\caption{\emph{\small Tucker ranks ${\bf r}=(r_1,r_2,r_3)$ for the long-range parts of
 $N$-particle potentials. }}
\label{Tab:Tucker_ranks}
\end{center}
\end{table} 
Table \ref{Tab:Tucker_ranks} represents the Tucker ranks ${\bf r}=(r_1,r_2,r_3)$
for the long-range parts of $N$-particle potentials.
The reference  Newton kernel is approximated on a 3D grid of size $2048^3$,
with the rank $R=29$ and with accuracy $\varepsilon_{\cal N}=10^{-5}$. 
Here the  Tucker tensor is computed with the stopping criteria $\varepsilon_{T2C}=10^{-5}$
in the ALS iteration. It can be seen that for fixed $R_l$ the Tucker ranks increase very moderately
in the system size $N$.

Fig. \ref{fig:SVD_R_L} demonstrates the decay in singular values of the 
side matrices (i.e., upper bound on the Tucker rank) in the
canonical tensor representing potential sums of long-range parts for different 
$R_l=10, 11$, and $12$.
 
 \begin{figure}[htb]
\centering
\includegraphics[width=7.2cm]{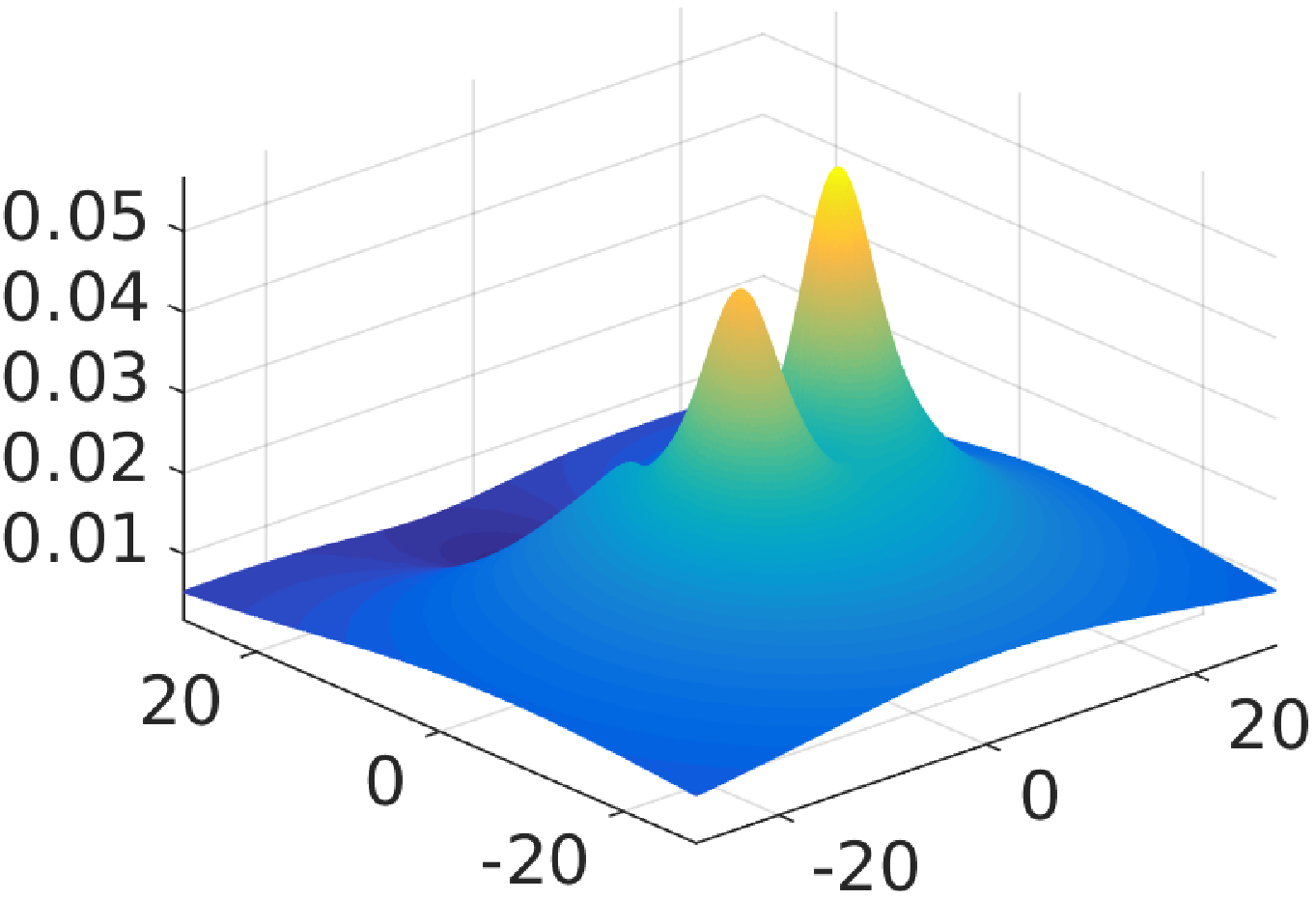} 
\includegraphics[width=7.2cm]{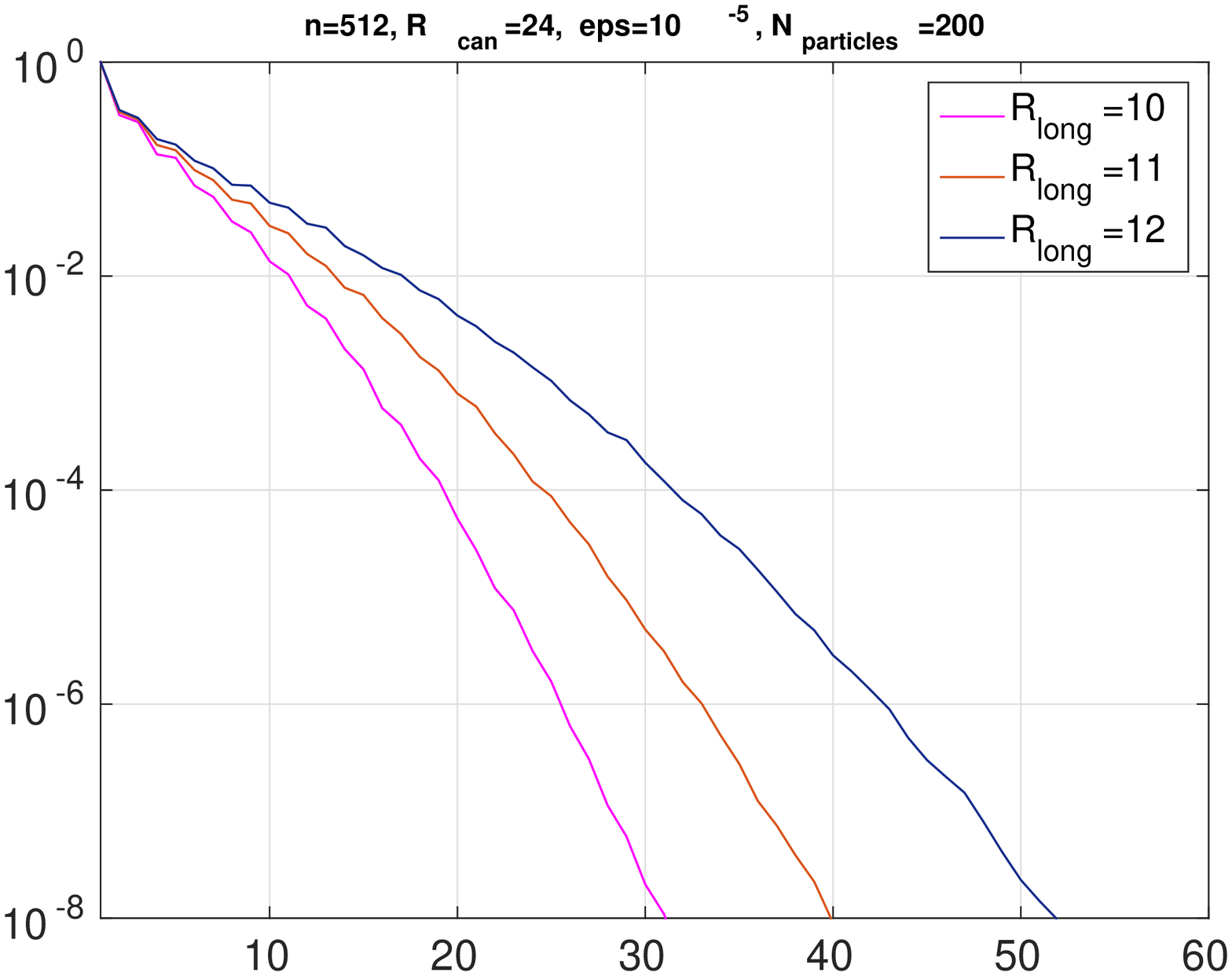}
\caption{\emph{\small Example of potential surface at level $z=0$ (left) for a sum of $N_0=200$ particles 
computed using only their long-range parts with $R_l=12$. Decay in singular values of the side matrices for the
canonical tensor representing sums of long-range parts for  $R_l=10, 11$, and $12$.}}
\label{fig:SVD_R_L}
\end{figure}

The proof of Theorem \ref{thm:Rank_LongRange} indicates that the Tucker directional vectors,
living on large $n^{\otimes d}$ spatial grids, 
are represented in the uniform Fourier basis with a small number of terms.
Hence, following the arguments in \cite{DKhOs-parabolic1-2012} and \cite {VeBoKh:Ewald:14},
we are able to apply
the low-rank QTT tensor approximation \cite{KhQuant:09} to these long vectors
(see \cite{osel-2d2d-2010} for the case of matrices).
The QTT tensor compression makes it possible to reduce the representation complexity
of the long-range part in an RS tensor to the logarithmic scale in the 
univariate grid size, $O(\log n)$. This topic will be addressed in a forthcoming paper.

\subsection{Range-separated canonical and Tucker tensor formats}\label{ssec:Cumulat_CanTens}

We recall that the general canonical tensor is specified by a $R$-term sum 
of arbitrary rank-$1$ tensors as in (\ref{eqn:CP_form}), which makes it difficult to
perform approximation process and multilinear algebra in such tensor format for large values of $R$.
In  applications to many-particle modeling the initial rank parameter $R$ is proportional to
the (large) number of particles $N$ with pre-factor about $30$, 
while the weights $z_k$ can be rather arbitrary\footnote{
Notice that the sub-class of the so-called
\emph{orthogonal canonical tensors} \cite{Kolda:01} allows stable rank reduction,
but suffers from the poor approximation capacity. Another class of "monotone" tensors
providing stable canonical representation is specified by
all positive canonical vectors, see \cite{khor-ml-2009,VeKhor_NLLA:15} for definition, 
which is the case in decomposition of the elliptic Green's kernels.
Both classes of tensors do not suite problems like (\ref{eqn:PotSum}).}.

The idea on how to get rid of the "curse of ranks", that is the critical 
bottleneck in application of tensor methods to the problems like (\ref{eqn:PotSum}),
is suggested by results in Theorem \ref{thm:Rank_LongRange} on the
almost uniform bound (in the number of particles $N$)
of the Tucker rank for the long-range part in a multi-particle potential. 
Thanks to this beneficial property, we are able to introduce the 
new range-separated (RS) tensor formats based on 
the aggregated composition of the global low-rank canonical/Tucker tensor and the locally 
supported canonical tensors living on non-intersecting index sub-sets embedded into 
the large corporate multi-index set ${\cal I}=I_1 \times \ldots \times I_d$, 
$I_\ell=\{1,\ldots,n\}$.
Such a parametrization attempts to represent the large multi-dimensional arrays with 
a storage cost linearly proportional to the number of cumulated inclusions (sub-tensors).


The structure of the \emph{range-separated canonical/Tucker tensor formats} is  
specified by a combination of the local-global low parametric representations,
which provide good approximation features in application to 
the problems of grid-based representation to many-particle interaction potentials 
with multiple singularities.

The following Definition \ref{def:ComCanF} introduces the description
of a sum of short range potentials having the local (up to some threshold) 
non-intersecting supports.


\begin{definition}\label{def:ComCanF} (Cumulated canonical tensors).
Given the index set ${\cal I}$, a set of multi-indices (sources) 
${\cal J}=\{{\bf j}^{(\nu)}:=(j^{(\nu)}_1,j^{(\nu)}_2,\ldots,j^{(\nu)}_d)\}$, $\nu=1,\ldots,N$, 
$j^{(\nu)}_\ell \in I_\ell$,
and the width index parameter $\gamma \in \mathbb{N}$ such that the $\gamma$-vicinity of 
each point ${\bf j}^{(\nu)}\in {\cal J}$, i.e. 
${\cal J}^{(\nu)}_\gamma:=\{{\bf j}: |{\bf j} -{\bf j}^{(\nu)} |\leq \gamma\}$ 
does not intersect all others
\[
 {\cal J}^{(\nu)}_\gamma \cap {\cal J}^{(\nu')}_\gamma = \varnothing, \quad \nu \neq \nu'.
\]
A rank-$R_0$ cumulated canonical tensor ${\bf U}$, associated with 
${\cal J}$ and width parameter $\gamma$, is defined as a set of tensors which 
can be represented in form 
\begin{equation}\label{eqn:Cum_CP_form}
   {\bf U} = {\sum}_{\nu =1}^{N} c_\nu {\bf U}_\nu, \quad \mbox{with} \quad rank({\bf U}_\nu)\leq R_0,
\end{equation}
where the rank-$R_0$ canonical tensors ${\bf U}_\nu=[u_{\bf j}]$ are vanishing beyond the $\gamma$-vicinity of 
${\bf j}^{(\nu)}$, 
\begin{equation}\label{eqn:Cum_CP_Support}
  u_{\bf j} = 0\quad \mbox{for} \quad  {\bf j} \subset {\cal I}\setminus 
  {\cal J}^{(\nu)}_\gamma, \quad \nu =1,\ldots,N.
\end{equation}
\end{definition}

Given the particular point distribution,
the effective support of the localized sub-tensors should be of the size close 
to the parameter $\sigma_\ast$,
appearing in Def. \ref{def:QU_Distrib}, that introduces the $\sigma_\ast$-separable point distributions 
characterized by the separation parameter $\sigma_\ast>0$.
In this case, we use the relation $\sigma_\ast \approx \gamma h$, where $h=2b/n$ is the mesh size 
of the computational $(n \times \ldots \times n)$-grid.

\begin{figure}[htb]
\centering 
\includegraphics[width=5.2cm]{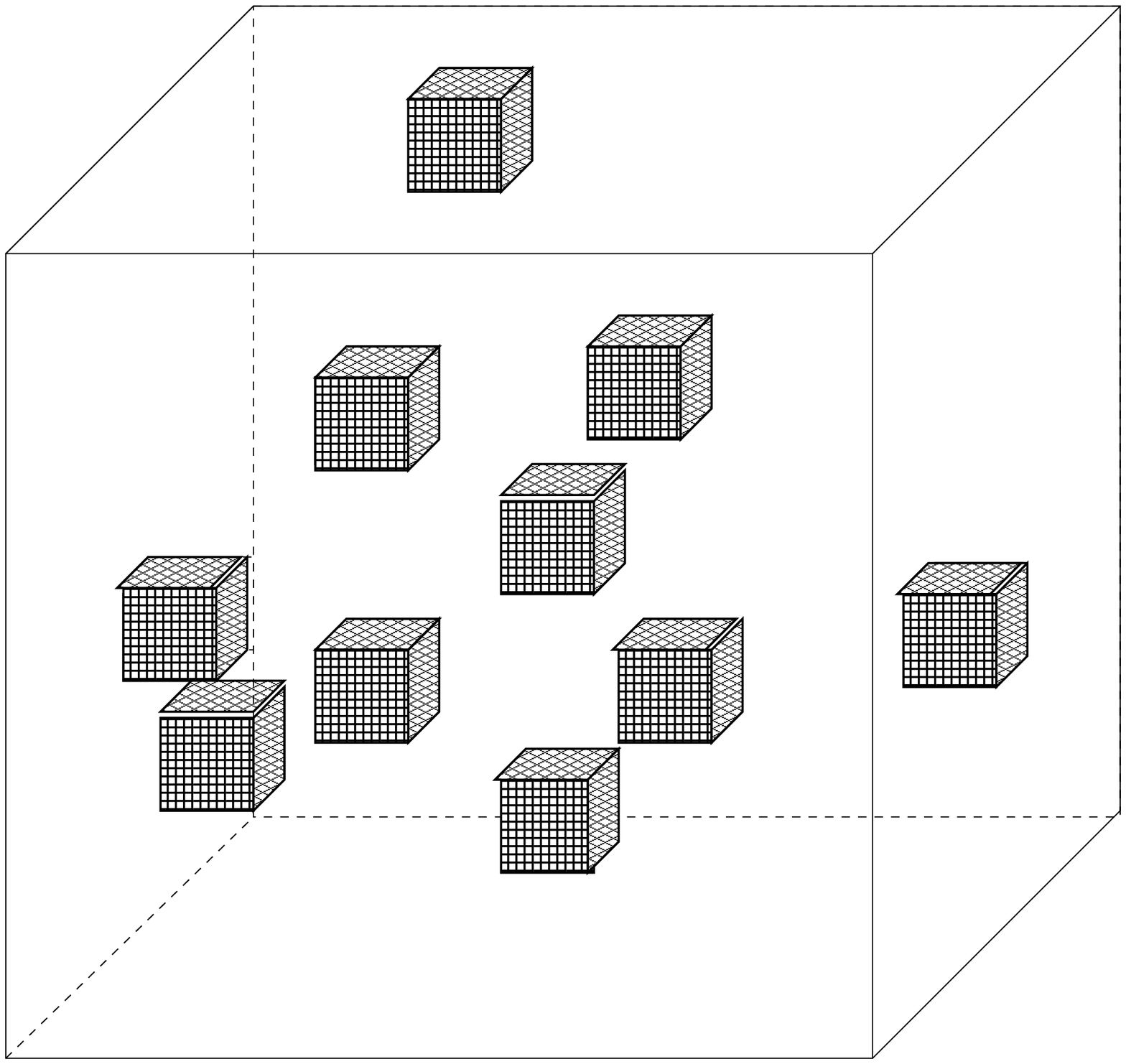} \quad \quad
\includegraphics[width=7.3cm]{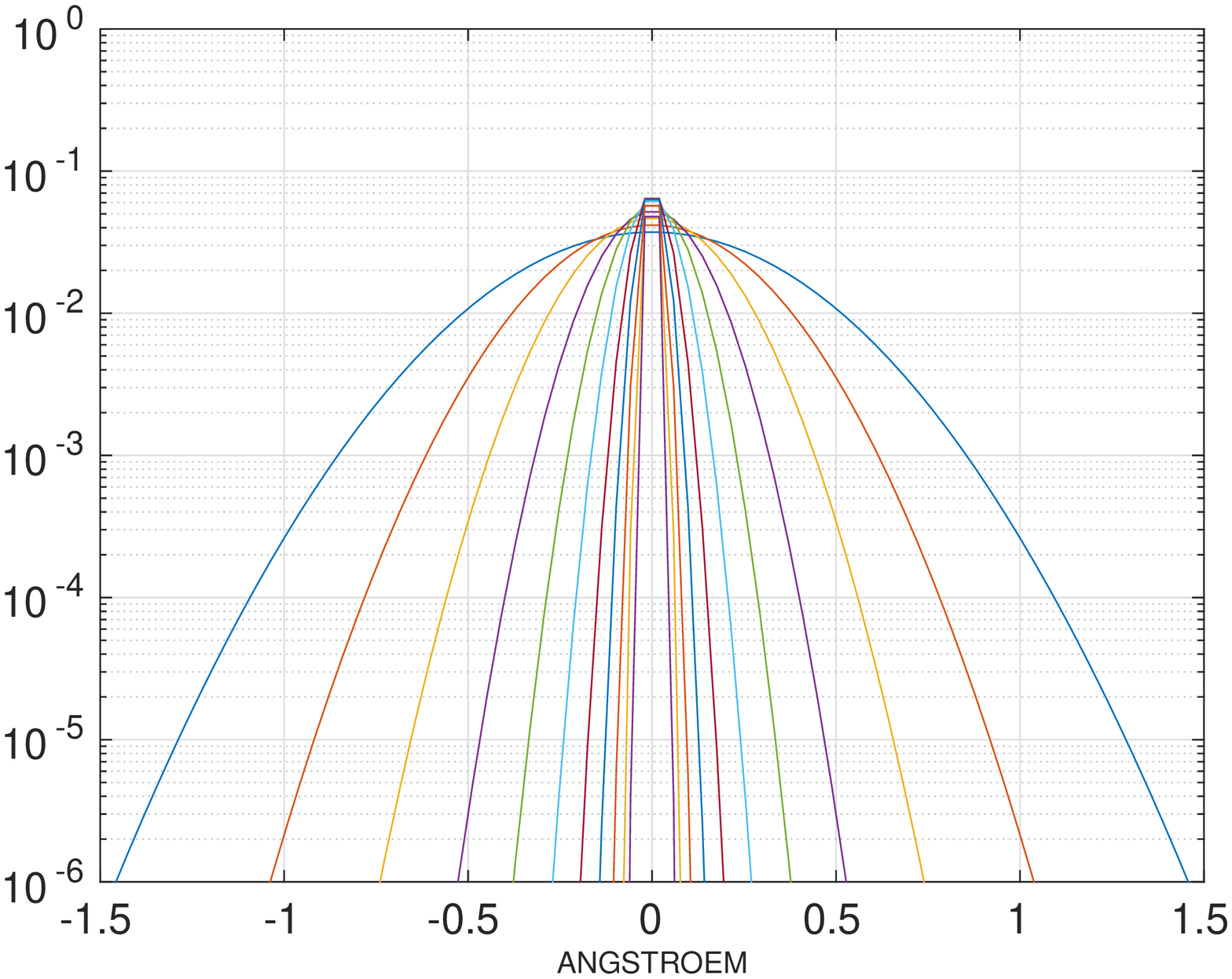}
\caption{\emph{\small Schematic illustration of effective supports of 
the cumulated canonical tensor (left); 
Short-range canonical vectors for $k=1,\ldots,11$, represented in logarithmic scale (right).}}
\label{fig:Support_Short}
\end{figure}
Figure \ref{fig:Support_Short} (left) illustrates the effective supports 
of a cumulated canonical tensor in the non-overlapping case, while Figure
\ref{fig:Support_Short} (right) presents the  supports 
for first $11$ short-range canonical vectors (selected from rank-$24$ 
reference canonical tensor ${\bf P}_R$), which allows to make a choice for 
the parameter $\gamma$ in separation criteria.

The separation criteria in Def. \ref{def:ComCanF} leads to rather "aggressive" strategy
for selection of the short-range part ${\bf P}_{R_s}$ in the reference canonical tensor
${\bf P}_{R}$ at the benefit of easy implementation of the cumulated
canonical tensor (non-overlapping case). However, in some cases this may lead
to overestimation of the Tucker/canonical rank in the long-range tensor component.
To relax the criteria in Def. \ref{def:ComCanF}, we propose the "soft" strategy  that
allows to include a few (i.e., $O(1)$ for large $N$) neighboring particles into the 
local vicinity ${\cal J}^{(\nu)}_\gamma$ of the source point $x_\nu$, which can be achieved
by increasing the overlap parameter $\gamma>0$. This allows to control the bound on the 
rank parameter of the long-range tensor almost uniformly in the system size $N$.


\begin{example}\label{exm:soft_separ}
For example, assume that the separation distance is equal to $\sigma_\ast=0.8$\AA{}, 
corresponding to the example in Fig. \ref{fig:1024_dist}, right, and the 
computational threshold is given $\varepsilon=10^{-4}$.
Then we find from Fig. \ref{fig:Support_Short} (right) that the "aggressive"
criteria in Def. \ref{def:ComCanF} leads to the choice $R_s=10$, since the value of the
canonical vector with $k=11$ at point $x=\sigma_\ast $ is about $10^{-3}$.
Hence, in order to control the required rank parameter $R_l$, we have to extend the 
overlap area to larger parameter $\sigma_\ast$ and, 
hence, to larger $\gamma$. This will lead to a small $O(1)$-overlap between supports of the 
short range tensor components, but without asymptotic increase in the total complexity.
\end{example}

In the following, we distinguish a special subclass of \emph{uniform CCT tensors}.
\begin{definition}\label{def:CCT_Uniform}
(Uniform CCT tensors).
 A CCT tensor in (\ref{eqn:Cum_CP_form}) is called uniform if all 
 components ${\bf U}_\nu$  are generated by a single rank-$R_0$ tensor 
 ${\bf U}_0= {\sum}_{m =1}^{R_0} \mu_m \hat{\bf u}_m^{(1)}  
 \otimes \cdots \otimes \hat{\bf u}_m^{(d)}$, 
 such that ${{\bf U}_\nu}|_{{\cal J}^{(\nu)}_\delta}=  {\bf U}_0$.
\end{definition}

Now we are in a position to define the range separated canonical and Tucker tensor formats
in $\mathbb{R}^{n_1\times ... \times n_d}$.
The range-separated canonical format is defined as follows.

\begin{definition}\label{Def:RS-Can_format} (RS-canonical tensors).\\
 The RS-canonical tensor format specifies the class of $d$-tensors 
 ${\bf A}  \in \mathbb{R}^{n_1\times \cdots \times n_d}$
 which can be represented as a sum 
of a rank-${R}$ canonical tensor ${\bf U}\in \mathbb{R}^{n_1\times ... \times n_d}$ and 
a (uniform) cumulated canonical tensor generated by ${\bf U}_0$ with $\mbox{rank}({\bf U}_0)\leq R_0$
as in Definition \ref{def:CCT_Uniform} (or more generally in Definition \ref{def:ComCanF}),
\begin{equation}\label{eqn:RS_Can}
 {\bf A} =  {\sum}_{k =1}^{R} \xi_k {\bf u}_k^{(1)}  \otimes \cdots \otimes {\bf u}_k^{(d)} +
 {\sum}_{\nu =1}^{N} c_\nu {\bf U}_\nu, 
\end{equation}
where $\mbox{diam}(\mbox{supp}{\bf U}_\nu)\leq 2 \gamma$ in the index size.
\end{definition}

For a given grid-point ${\bf i}\in {\cal I}=I_1 \times ... \times I_d$, we define 
the set of indices 
$$
{\cal L}({\bf i}):=\{\nu\in \{1,\ldots ,N\}: {\bf i}\in \mbox{supp}{\bf U}_\nu \},
$$
which label all short-range tensors ${\bf U}_\nu$ including the grid-point ${\bf i}$
within its effective support.
\begin{lemma}\label{lem:Property_RST_can}
The storage cost of RS-canonical tensor is estimated by
$$
\mbox{stor}({\bf A})\leq d R n + (d+1)N + d R_0 \gamma.
$$
Given ${\bf i}\in {\cal I}$, denote by $\overline{\bf u}^{(\ell)}_{i_\ell}$ the  
row-vector  with index $i_\ell$ in the side matrix $U^{(\ell)}\in \mathbb{R}^{n_\ell \times R}$,
and let $\xi=(\xi_1,\ldots,\xi_d)$.
Then the ${\bf i}$-th entry of the RS-canonical tensor ${\bf A}=[a_{\bf i}]$ can be calculated 
as a sum of long- and short-range contributions by
\[
 a_{\bf i}= \left(\odot_{\ell=1}^d \overline{\bf u}^{(\ell)}_{i_\ell} \right) \xi^T +
 \sum_{\nu \in {\cal L}({\bf i})} c_\nu {\bf U}_\nu({\bf i}),
\]
at the expense  $O(d R + 2 d \gamma R_0)$.
\end{lemma}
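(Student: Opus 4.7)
The plan is to decouple the two estimates: the storage bound is pure accounting of the parameters appearing in Definition \ref{Def:RS-Can_format}, while the entry formula amounts to collapsing each summand of (\ref{eqn:RS_Can}) at the prescribed index ${\bf i}$ using the support condition (\ref{eqn:Cum_CP_Support}).

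For the storage count, I would walk through the three pieces that parametrize ${\bf A}$ in the RS-canonical format. First, the long-range canonical summand is described by $d$ side matrices $U^{(\ell)}\in \mathbb{R}^{n\times R}$ together with the $R$ weights $\xi_k$, giving $dRn+R$ numbers; the additive $R$ is absorbed into $dRn$. Second, under the uniform-CCT assumption of Definition \ref{def:CCT_Uniform}, all short-range components ${\bf U}_\nu$ are obtained from a single reference ${\bf U}_0$, whose $R_0$ canonical factor vectors are supported in the $\gamma$-vicinity of the origin; this contributes $dR_0\gamma$. Third, each source $\nu\in\{1,\ldots,N\}$ carries the shift ${\bf j}^{(\nu)}\in\mathbb{Z}^d$ and the weight $c_\nu$, accounting for $(d+1)N$ entries. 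Summing the three contributions yields $\mbox{stor}({\bf A})\leq dRn+(d+1)N+dR_0\gamma$.

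For the entry formula, the ${\bf i}$-th entry of the rank-$R$ canonical summand equals $\sum_{k=1}^R \xi_k\prod_{\ell=1}^d u_k^{(\ell)}(i_\ell)$. Reading off row $i_\ell$ of $U^{(\ell)}$ produces exactly the $R$-vector $\overline{\bf u}^{(\ell)}_{i_\ell}$, so this quantity can be rewritten as $(\odot_{\ell=1}^d \overline{\bf u}^{(\ell)}_{i_\ell})\,\xi^T$, matching the announced expression and requiring $(d-1)R$ Hadamard multiplications plus $R$ for the final inner product, i.e.\ $O(dR)$ work. For the short-range sum, the support condition (\ref{eqn:Cum_CP_Support}) makes ${\bf U}_\nu({\bf i})$ vanish unless $\nu\in{\cal L}({\bf i})$, which reduces $\sum_{\nu=1}^N c_\nu {\bf U}_\nu({\bf i})$ to $\sum_{\nu\in{\cal L}({\bf i})} c_\nu {\bf U}_\nu({\bf i})$; each surviving term is obtained by evaluating the rank-$R_0$ shifted reference ${\bf U}_0$ at ${\bf i}-{\bf j}^{(\nu)}$, costing $O(dR_0)$ per source.

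The only step requiring a separate geometric argument is the uniform bound on $|{\cal L}({\bf i})|$. In the strict non-intersection setting of Definition \ref{def:ComCanF} the $\gamma$-vicinities are pairwise disjoint, so trivially $|{\cal L}({\bf i})|\leq 1$ and the short-range cost is $O(dR_0)$. To also cover the soft-overlap relaxation discussed before Example \ref{exm:soft_separ}, I would use a pessimistic fiber-wise count: along any coordinate axis through ${\bf i}$, the segment of length $2\gamma$ can contain the shift coordinates of at most $O(\gamma)$ sources, giving $|{\cal L}({\bf i})|=O(\gamma)$ in the worst case and therefore the stated overall cost $O(dR+2d\gamma R_0)$. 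I expect this combinatorial bound to be the only mildly delicate point; once it is in place, both statements of the lemma follow immediately from the enumeration above.
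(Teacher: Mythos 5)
Your storage count and entry formula agree with the paper's proof: the paper likewise enumerates the three ingredients of the parametrization in Definition~\ref{Def:RS-Can_format} (the side matrices and weights of the rank-$R$ canonical tensor, the $R_0$ mode vectors of the reference ${\bf U}_0$ of mode size $\leq 2\gamma$, and the $N$ shift/weight pairs), and the entry evaluation is the straightforward collapse of (\ref{eqn:RS_Can}) at~${\bf i}$.

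Where you diverge is in bounding $\#{\cal L}({\bf i})$, and there your argument has a real gap. The paper gets this directly from well-separability: by Definition~\ref{def:QU_Distrib} together with the identification $\sigma_\ast \approx \gamma h$, any cube of side $O(\gamma h)$ can contain at most $O(1)$ centers of a $\sigma_\ast$-separated set (a simple packing/volume bound), hence $\#{\cal L}({\bf i}) = O(1)$ uniformly in ${\bf i}$ and $N$. Your proposed ``fiber-wise'' count does not prove this. First, the assertion that a length-$2\gamma$ window along one axis contains the coordinate of at most $O(\gamma)$ sources is not true without a separation hypothesis: nothing prevents arbitrarily many sources from sharing a coordinate. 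Second, even if each one-dimensional projection carried $O(\gamma)$ sources, that would not by itself bound the number of sources landing inside the $d$-dimensional $(2\gamma)$-cube, since the 1D projections are not injective. In short, you need the $\sigma_\ast$-separability to close the argument, and once you invoke it, the clean route is the $d$-dimensional packing bound used in the paper, which gives the sharper $\#{\cal L}({\bf i}) = O(1)$ (the stated $O(dR + 2d\gamma R_0)$ is then an upper bound with room to spare). Replace the fiber-wise heuristic by that packing argument and the proof is complete.
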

\begin{proof}
Definition \ref{Def:RS-Can_format} implies that
each RS-canonical tensor is uniquely defined by the following parametrization:
rank-$R$ canonical tensor ${\bf U}$, the rank-$R_0$ local reference canonical tensor ${\bf U}_0$
with mode-size bounded by $2\gamma$, and list ${\cal J}$ of the coordinates and weights of $N$ 
particles. Hence the storage cost directly follows.
 To justify the representation complexity, we notice that by well-separability assumption
 (see Definition \ref{def:QU_Distrib}), we have $\#{\cal L}({\bf i}) =O(1)$
 for all  ${\bf i}\in {\cal I}$. This proves the complexity bounds.
\end{proof}

Now we define the class of RS-Tucker tensors.

\begin{definition}\label{Def:RS-Tucker_format} (RS-Tucker tensors).
 The RS-Tucker tensor format specifies the class of $d$-tensors 
 ${\bf A}  \in \mathbb{R}^{n_1\times ... \times n_d}$
 which can be represented as a sum of a rank-${\bf r}$ Tucker tensor ${\bf V}$ and 
a (uniform) cumulated canonical tensor generated by ${\bf U}_0$ with $\mbox{rank}({\bf U}_0)\leq R_0$
as in Definition \ref{def:CCT_Uniform}
(or more generally in Definition \ref{def:ComCanF}),
\begin{equation}\label{eqn:RS_Tucker}
 {\bf A} =  \boldsymbol{\beta} \times_1 V^{(1)} \times_2 V^{(2)}\ldots \times_d V^{(d)} +
 {\sum}_{\nu =1}^{N} c_\nu {\bf U}_\nu, 
 \end{equation}
where the tensor ${\bf U}_\nu$, $\nu=1,\ldots,N$, has local support, 
i.e. $diam(supp{\bf U}_\nu)\leq 2 \gamma$.
\end{definition}

Similar to Lemma \ref{lem:Property_RST_can} the corresponding statement for the RS-Tucker tensors
can be proven.
\begin{lemma}\label{lem:Property_RST_Tuck}
The storage size for RS-Tucker tensor does not exceed 
$$
\mbox{stor}({\bf A})\leq r^d + d r n + (d+1)N + d R_0 \gamma.
$$
Let the $r_\ell$-vector ${\bf v}^{(\ell)}_{i_\ell}$ be the $i_\ell$ row of the matrix $V^{(\ell)}$.
Then the ${\bf i}$-th element of the RS-Tucker tensor ${\bf A}=[a_{\bf i}]$ can be calculated by
\[
 a_{\bf i}=  \boldsymbol{\beta} \times_1 {\bf v}^{(1)}_{i_1l} \times_2 {\bf v}^{(2)}_{i_2}
 \ldots \times_d {\bf v}^{(d)}_{i_d}+
 \sum_{\nu \in {\cal L}({\bf i})} c_\nu {\bf U}_\nu({\bf i})
\]
at the expanse $O(r^d + 2 d \gamma R_0)$.
\end{lemma}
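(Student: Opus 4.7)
The plan is to mirror the proof of Lemma \ref{lem:Property_RST_can}, replacing the rank-$R$ canonical summand by the rank-${\bf r}$ Tucker summand, and organizing the argument into three steps: a parameter count for storage, an entry-wise decomposition derived directly from Definition \ref{Def:RS-Tucker_format}, and a flop count for evaluating a single entry. Since the cumulated canonical piece is identical in both lemmas, the only genuinely new input is the behavior of the Tucker component.

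For the storage bound I would simply enumerate the parameters describing each ingredient of the representation (\ref{eqn:RS_Tucker}). The Tucker core $\boldsymbol{\beta}$ contributes $r^d$ numbers; the $d$ factor matrices $V^{(\ell)}\in \mathbb{R}^{n\times r}$ together contribute $drn$ numbers; the list ${\cal J}$ of $N$ source multi-indices together with their weights $c_\nu$ contributes $(d+1)N$ scalars; and because the CCT part is assumed uniform in the sense of Definition \ref{def:CCT_Uniform}, it is fully specified by the single rank-$R_0$ reference tensor ${\bf U}_0$ supported on a grid of mode size $\le 2\gamma$, costing $dR_0\gamma$ up to the absorbed factor of $2$. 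Summing these contributions yields the asserted bound.

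For the entry formula I would substitute (\ref{eqn:RS_Tucker}) at the multi-index ${\bf i}$, use the standard Tucker identity expressing the ${\bf i}$-entry of $\boldsymbol{\beta}\times_1 V^{(1)}\times_2\cdots \times_d V^{(d)}$ as the core contraction $\boldsymbol{\beta}\times_1 {\bf v}^{(1)}_{i_1}\times_2\cdots\times_d {\bf v}^{(d)}_{i_d}$ against the rows ${\bf v}^{(\ell)}_{i_\ell}$ of the factor matrices, and then invoke the support condition (\ref{eqn:Cum_CP_Support}) to deduce that $c_\nu {\bf U}_\nu({\bf i})=0$ whenever $\nu\notin {\cal L}({\bf i})$. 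This restricts the short-range sum to ${\cal L}({\bf i})$ and gives exactly the claimed formula.

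The main obstacle is the clean complexity count. The Tucker contraction takes $O(r^d)$ multiplications, for example by summing $r^d$ core entries each weighted by a product of the selected row entries, or equivalently by successive mode contractions; the $O(dr)$ cost of reading the rows ${\bf v}^{(\ell)}_{i_\ell}$ is absorbed into $r^d$. For the short-range part, the condition $\mbox{diam}(\mbox{supp}\,{\bf U}_\nu)\leq 2\gamma$ combined with the $\sigma_\ast$-separability of $\{{\bf j}^{(\nu)}\}$ from Definition \ref{def:QU_Distrib} forces $|{\cal L}({\bf i})|=O(1)$ (equal to $1$ in the strictly non-overlapping regime of Definition \ref{def:ComCanF}, and $O(1)$ in the soft-separation variant of Example \ref{exm:soft_separ}). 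Hence only $O(1)$ rank-$R_0$ canonical entries must be evaluated at cost $O(dR_0)$ each, which is absorbed into the bound $O(2d\gamma R_0)$ used in the analogous Lemma \ref{lem:Property_RST_can}. Adding the two contributions gives the stated complexity $O(r^d+2d\gamma R_0)$.
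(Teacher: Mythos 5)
Your proof is correct and follows essentially the same route as the paper, which simply enumerates the parameters of the representation (Tucker core, factor matrices, reference tensor, and the list of $N$ coordinates and weights) for the storage bound and invokes the well-separability assumption to get $\#{\cal L}({\bf i})=O(1)$ for the per-entry cost. Your version spells out the Tucker contraction identity and the flop count more explicitly than the paper's terse one-paragraph proof, but there is no difference in substance.
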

\begin{proof}
In view of Definition \ref{Def:RS-Tucker_format}
each RS-Tucker tensor is uniquely defined by the following parametrization:
the rank-${\bf r}=(r_1, ... ,r_d)$ Tucker tensor ${\bf V}\in \mathbb{R}^{n_1\times ... \times n_d}$, 
the rank-$R_0$ local reference canonical tensor ${\bf U}_0$ with 
$\mbox{diam}(\mbox{supp}{\bf U}_0)\leq 2 \gamma$,
list ${\cal J}$ of the coordinates of 
$N$ centers of particles, $\{x_\nu\}$, and $N$ weights $\{c_\nu\}$. This proves the complexity bounds.
\end{proof}

Fig. \ref{fig:TuckVect_Long} represents the first seven Tucker vectors 
of the long range part in the RS tensor  for $50$ and $500$ particles. In both cases 
we observe the very smooth shape of the orthogonal functions which 
do not demonstrate the tendency to higher oscillations for larger number of particles. 
\begin{figure}[htb]
\centering 
  \includegraphics[width=7.2cm]{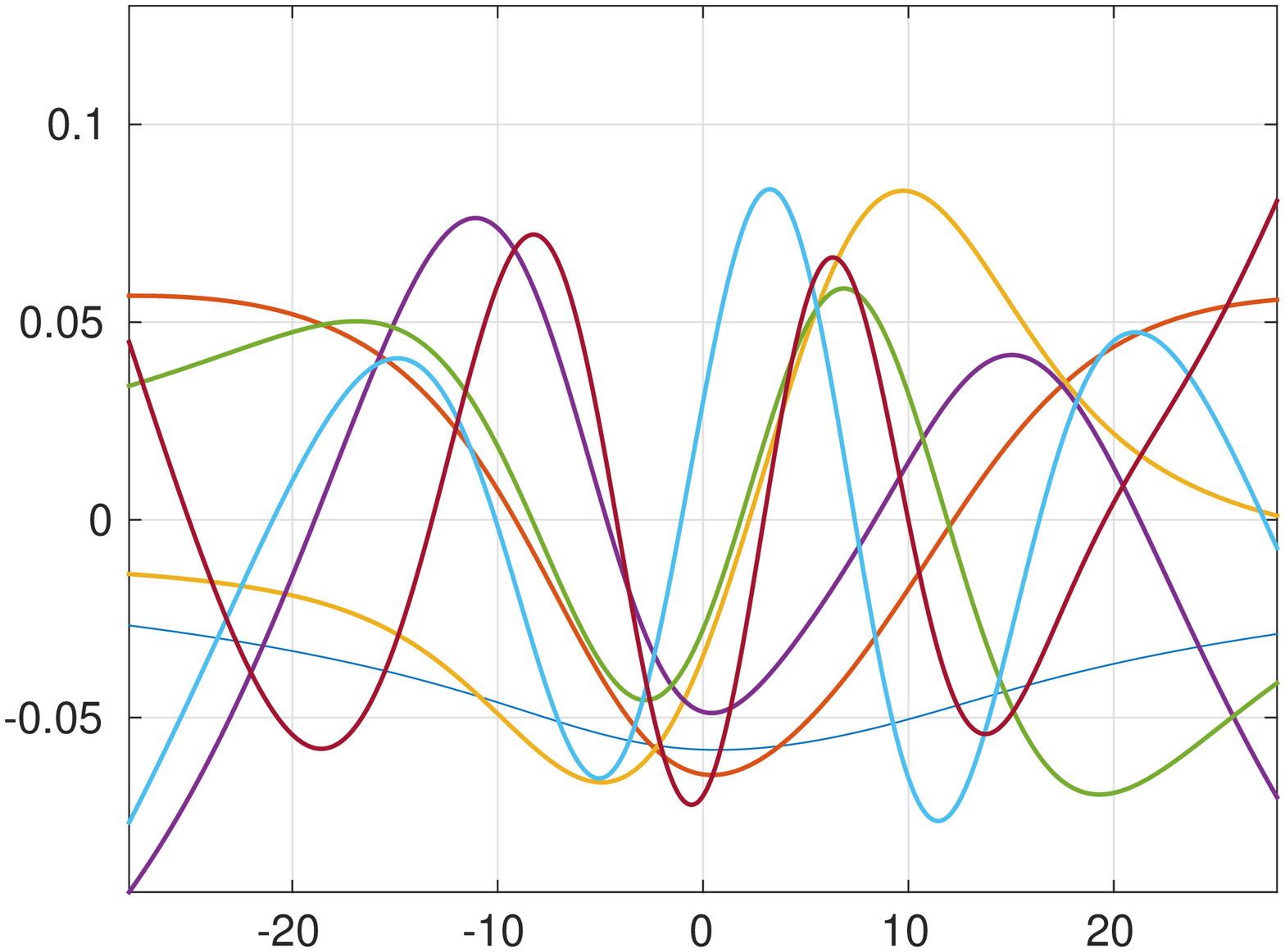}\quad
  \includegraphics[width=7.2cm]{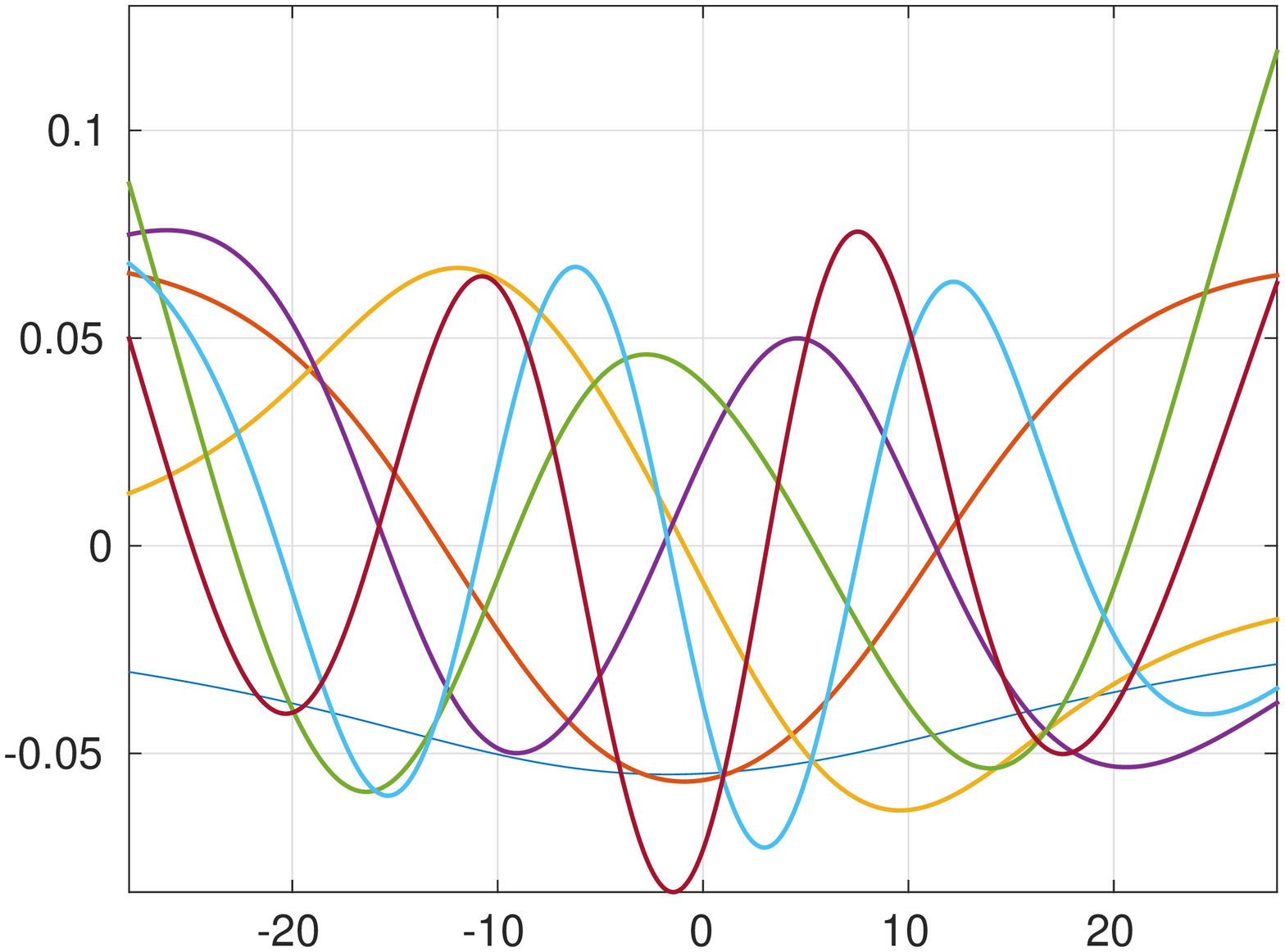} 
\caption{\emph{\small Tucker vectors of the long range  part in the RS tensor for $50$ particles (left) 
and $500$ particles (right).}}
\label{fig:TuckVect_Long}
\end{figure}

The main computational benefits of the new range-separated canonical/Tucker tensor formats
are explained by the important uniform bounds on the Tucker-rank of the long-range part
in the large sum of interaction potentials (see Theorem \ref{thm:Rank_LongRange} 
and numerics in \S\ref{ssec:Sum_LongRange}). Moreover, we have the low storage cost for 
RS-canonical/Tucker tensors, cheap representation of each entry in an RS-tensor,
possibility for simple implementation of
multi-linear algebra on these tensors (see \S\ref{ssec:Sum_ShortRange}), which opens 
the opportunities for various applications.

The total rank of the sum of canonical tensors in ${\bf U}$, see (\ref{eqn:Cum_CP_form}), may 
become large for larger $N$ since the pessimistic bound $rank({\bf U})\leq N R_0$. However,
cumulated canonical tensors (CCT) have two beneficial features which are particularly 
useful in the low-rank tensor representation of large potential sums.
\begin{proposition}\label{prop:Cum_CP_Feature}
(Properties of CCT tensors).\\
(A) The local rank of a CCT tensor ${\bf U}$ is bounded by $R_0$: 
$$
rank_{loc}({\bf U}):=\max_\nu rank({\bf U}_\nu)\leq R_0.
$$
(B) Local components in the CCT tensor (\ref{eqn:Cum_CP_form}) are ``block orthogonal" in the sense
\begin{equation}\label{eqn:Cum_CP_Orthog}
 \langle {\bf U}_\nu, {\bf U}_{\nu'} \rangle =0, \quad \forall \nu \neq \nu'.
\end{equation}
(C) There holds $\| {\bf U}  \|= {\sum}_{\nu =1}^{N} c_\nu \|{\bf U}_\nu\|$.
\end{proposition}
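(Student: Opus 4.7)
The plan is to address (A), (B), (C) in succession, each step exploiting the disjoint-support structure built into Definition \ref{def:ComCanF}. None of the three requires any analytic machinery; everything reduces to bookkeeping of supports and a single bilinear expansion.

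For (A) I would simply unpack the definition: in (\ref{eqn:Cum_CP_form}) each summand ${\bf U}_\nu$ is \emph{already declared} to have canonical rank at most $R_0$, so $\max_\nu \mathrm{rank}({\bf U}_\nu)\le R_0$ by construction and nothing further is needed.

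The substance of the argument lies in (B). The key observation is that by hypothesis the index boxes ${\cal J}_\gamma^{(\nu)}$ and ${\cal J}_\gamma^{(\nu')}$ are disjoint for $\nu\neq\nu'$, and by (\ref{eqn:Cum_CP_Support}) each ${\bf U}_\nu$ vanishes outside its own box. Hence the entrywise product $({\bf U}_\nu)_{\bf j}({\bf U}_{\nu'})_{\bf j}$ is zero for every ${\bf j}\in {\cal I}$: if ${\bf j}\notin {\cal J}_\gamma^{(\nu)}$ the first factor kills it, otherwise ${\bf j}\notin {\cal J}_\gamma^{(\nu')}$ and the second does. Summing over ${\bf j}\in {\cal I}$ then gives $\langle {\bf U}_\nu,{\bf U}_{\nu'}\rangle = 0$. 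This step is pure support bookkeeping and I do not anticipate any technical obstacle.

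For (C), I would derive the identity directly from (B) by expanding $\langle {\bf U},{\bf U}\rangle = \sum_{\nu,\nu'} c_\nu c_{\nu'}\,\langle {\bf U}_\nu,{\bf U}_{\nu'}\rangle$. All off-diagonal terms vanish by (B), leaving the Pythagorean identity $\|{\bf U}\|^2 = \sum_\nu c_\nu^2\,\|{\bf U}_\nu\|^2$ in the Euclidean norm; equivalently, an entrywise $\ell^1$ reading of $\|\cdot\|$ (which is natural here since the $c_\nu$ and the local canonical tensors ${\bf U}_\nu$ carry a consistent sign in the short-range potential setting) yields the literal linear form stated. Thus the only point that warrants care is fixing which ambient norm is intended; once this is pinned down, the identity collapses to a one-line expansion using the disjoint supports from (B), and I expect this bookkeeping/normalization reconciliation to be the main (and only) subtlety of the proof.
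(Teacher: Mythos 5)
Your approach matches the paper's proof, which is nothing more than the single sentence ``Properties (A) and (B) simply follow by definition of CCT, while (C) is a direct consequence of (B).'' Your treatment of (A) and (B) is exactly the intended reading: (A) is definitional, (B) is disjoint-support bookkeeping.

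You are also right to flag (C) as the only subtle point, and in fact your scruple exposes a misstatement in the paper. With the Euclidean (Frobenius) norm, which is the norm in use everywhere else (the scalar product $\langle\cdot,\cdot\rangle$ in (B), the singular-value bounds in Lemma~\ref{lem:RHOSVD_CCT}), the Pythagorean expansion you write down gives
$\|{\bf U}\|^2 = \sum_{\nu} c_\nu^2\,\|{\bf U}_\nu\|^2$,
not the linear identity $\|{\bf U}\| = \sum_{\nu} c_\nu\,\|{\bf U}_\nu\|$ printed in (C). The paper's own proof of Lemma~\ref{lem:RHOSVD_CCT} confirms this: there ``property (C)'' is invoked precisely in the form $\sum_\nu c_\nu^2 \|{\bf U}_\nu\|^2 = \|{\bf U}\|^2$. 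So (C) as printed is a typo, and the version you derive is the one actually meant and used. Your proposed $\ell^1$ reinterpretation would also make the literal statement true under the extra assumption $c_\nu\geq 0$, but that is not the interpretation the paper relies on downstream; the Frobenius--Pythagorean reading is the intended one, and you should simply state (C) as the squared identity rather than try to rescue the printed formula.
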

\begin{proof}
Properties (A) and (B) simply follow by definition of CCT, while (C) is a 
direct consequence of (B).
\end{proof}

If $R_0=1$, i.e. ${\bf U}$ is the usual rank-$N$ canonical tensor, then the property (B) 
in Proposition \ref{prop:Cum_CP_Feature} leads to the definition of orthogonal 
canonical tensors in \cite{Kolda:01}; hence, in case $R_0 > 1$, we arrive at the generalization 
further called the \emph{block orthogonal canonical tensors}.

The bound $R'=rank({\bf U})\leq N R_0$ indicates that the direct summation 
in (\ref{eqn:Cum_CP_form}) in the canonical/Tucker formats may lead to 
practically non-tractable representations.
However, the block orthogonality property in Proposition \ref{prop:Cum_CP_Feature}, (B)
allows to apply the stable RHOSVD approximation for the rank optimization, 
see Section \ref{app:C2T_HOSVD}.
The stability of RHOSVD in the case of orthogonal canonical tensors was analyzed in 
\cite{khor-ml-2009,VeKhor_NLLA:15}. In what follows, we prove the stability of 
such tensor approximation applied to CCT representations.

\begin{lemma}\label{lem:RHOSVD_CCT}
Let the local canonical tensors be stable, i.e. 
$\sum_{m=1}^{R_0} \mu_{m}^2 \leq C \|{\bf U}_\nu\|^2$ 
(see Def. \ref{def:CCT_Uniform}).
Then the rank-${\bf r}$ RHOSVD-Tucker approximation ${\bf U}_{({\bf r})}^0$ to the CCT, 
${\bf U}$, provides the stable error bound
\[
\| {\bf U} - {\bf U}_{({\bf r})}^0\| \leq C  \sum\limits_{\ell=1}^3
(\sum\limits_{k=r_\ell +1}^{\min(n,R')} \sigma_{\ell,k}^2)^{1/2}\|{\bf U}\|,
\]
where $\sigma_{\ell,k}$ denote the singular values of the side matrices $U^{(\ell)}$, see 
(\ref{eqn:SVD_SideMatr}).
\end{lemma}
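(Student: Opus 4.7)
The plan is to reduce the claim to the standard RHOSVD error bound for canonical tensors and then use the two special structural features of the CCT to convert the sum-of-squared-coefficients factor that naturally appears in that bound into a factor proportional to $\|{\bf U}\|$.

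First I would expand each local component in its rank-$R_0$ canonical form
$$
{\bf U}_\nu = \sum_{m=1}^{R_0} \mu_{m,\nu}\, \hat{\bf u}^{(1)}_{m,\nu} \otimes \cdots \otimes \hat{\bf u}^{(d)}_{m,\nu},
$$
so that the whole CCT ${\bf U}$ is written as a single canonical sum of at most $R' = N R_0$ rank-one terms with coefficients $c_\nu \mu_{m,\nu}$. The side matrices $U^{(\ell)}$ entering RHOSVD are obtained by concatenating the corresponding normalized mode-$\ell$ vectors, and their SVDs give the singular values $\sigma_{\ell,k}$ in the statement. The standard RHOSVD error estimate (as recalled in the Appendix and in \cite{khor-ml-2009}) then yields, without any structural assumption,
$$
\| {\bf U} - {\bf U}_{({\bf r})}^0\| \;\le\; \Bigl(\sum_{\nu=1}^{N}\sum_{m=1}^{R_0} c_\nu^{\,2}\mu_{m,\nu}^{\,2}\Bigr)^{\!1/2} \sum_{\ell=1}^{d}\Bigl(\sum_{k=r_\ell+1}^{\min(n,R')} \sigma_{\ell,k}^{\,2}\Bigr)^{\!1/2}.
$$

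Second, I would bound the coefficient factor using the two ingredients furnished by the CCT structure. The local stability assumption gives $\sum_{m=1}^{R_0} \mu_{m,\nu}^{\,2} \le C\|{\bf U}_\nu\|^2$ for every $\nu$, so
$$
\sum_{\nu,m} c_\nu^{\,2}\mu_{m,\nu}^{\,2} \;\le\; C\sum_{\nu=1}^{N} c_\nu^{\,2}\|{\bf U}_\nu\|^2.
$$
Now the block orthogonality (\ref{eqn:Cum_CP_Orthog}) of Proposition~\ref{prop:Cum_CP_Feature} (the supports ${\cal J}^{(\nu)}_\gamma$ are pairwise disjoint) implies the Pythagorean identity $\|{\bf U}\|^2 = \sum_{\nu} c_\nu^{\,2}\|{\bf U}_\nu\|^2$, so the right-hand side above is exactly $C\|{\bf U}\|^2$. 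Substituting this into the RHOSVD estimate produces the stated bound.

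The only step with any subtlety is the combination of local stability with block orthogonality: without disjoint supports one would only have $\|{\bf U}\|^2 \le \sum_\nu c_\nu^{\,2}\|{\bf U}_\nu\|^2$ in one direction and no reverse inequality, so the coefficient factor could in principle blow up relative to $\|{\bf U}\|$; it is precisely the non-intersecting supports in Definition~\ref{def:ComCanF} that guarantee equality and therefore stability of the RHOSVD truncation. Everything else is bookkeeping around the standard RHOSVD bound, so I do not anticipate a serious obstacle beyond making sure the global canonical representation is assembled so that the $\sigma_{\ell,k}$ really coincide with the singular values of the concatenated side matrices appearing in (\ref{eqn:SVD_SideMatr}).
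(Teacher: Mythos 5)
Your proof is correct and follows essentially the same route as the paper: apply the general RHOSVD error estimate from \cite{khor-ml-2009} to the flattened rank-$NR_0$ canonical representation, bound $\sum_m \mu_m^2$ by $C\|{\bf U}_\nu\|^2$ via local stability, and convert $\sum_\nu c_\nu^2\|{\bf U}_\nu\|^2$ into $\|{\bf U}\|^2$ via the Pythagorean identity that follows from the disjoint supports. Your emphasis that block orthogonality (\ref{eqn:Cum_CP_Orthog}) is what supplies the exact Pythagorean equality (rather than a one-sided inequality) is precisely the point the paper leans on, even though the paper nominally cites property (C) of Proposition~\ref{prop:Cum_CP_Feature}, whose unsquared statement is really a shorthand for the squared identity coming from (B).
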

\begin{proof}
 We apply the general error estimate for RHOSVD approximation \cite{khor-ml-2009} 
\[
 \|{\bf U}- {\bf U}_{({\bf r})}^0\| \leq C  \sum\limits_{\ell=1}^3
(\sum\limits_{k=r_\ell +1}^{\min(n,R')} \sigma_{\ell,k}^2)^{1/2} 
(\sum\limits_{\nu =1}^{N}  \sum\limits_{m =1}^{R_0} c_\nu^2 \mu_{m}^2)^{1/2},
\] 
and then take into account the property (C), Proposition \ref{prop:Cum_CP_Feature} to obtain 
 \[
 \sum\limits_{\nu =1}^{N}  \sum\limits_{m =1}^{R_0} c_\nu^2 \mu_{m}^2
 =\sum\limits_{\nu =1}^{N}c_\nu^2 \sum\limits_{m =1}^{R_0}  \mu_{m}^2
    \leq C \sum\limits_{\nu =1}^{N} c_\nu^2 \|{\bf U}_\nu\|^2  = C\| {\bf U}\|^2,
 \]
which completes the proof.
\end{proof}

We comment that the stability assumption in Lemma \ref{lem:RHOSVD_CCT} is satisfied 
for the constructive canonical tensor approximation to the Newton and other types of Green's kernels 
obtained by sinc-quadrature based representations, where all skeleton vectors
are non-negative and monotone.

\begin{figure}[htb]
\centering
\includegraphics[width=7.2cm]{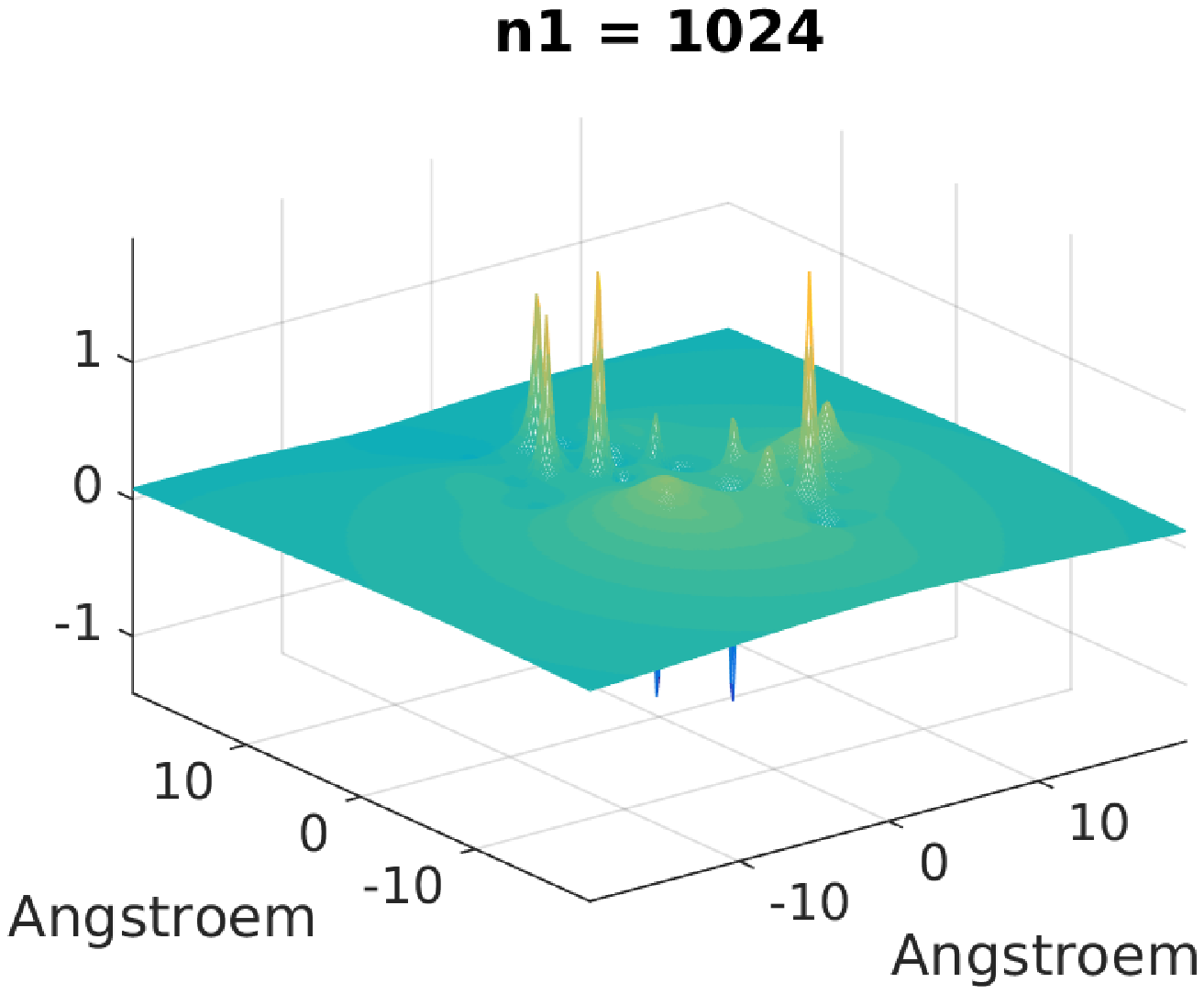} \quad
\includegraphics[width=7.2cm]{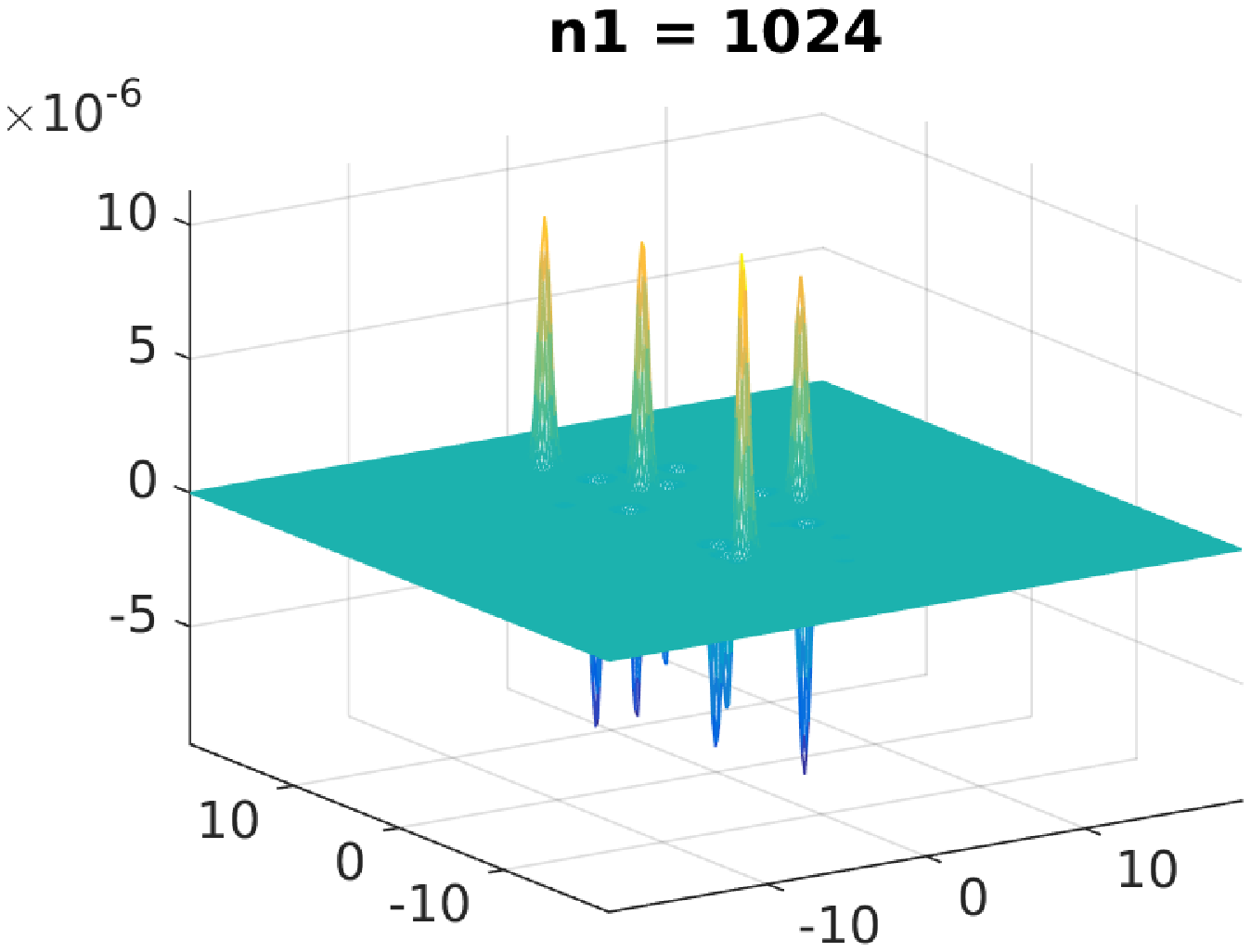}
\includegraphics[width=7.2cm]{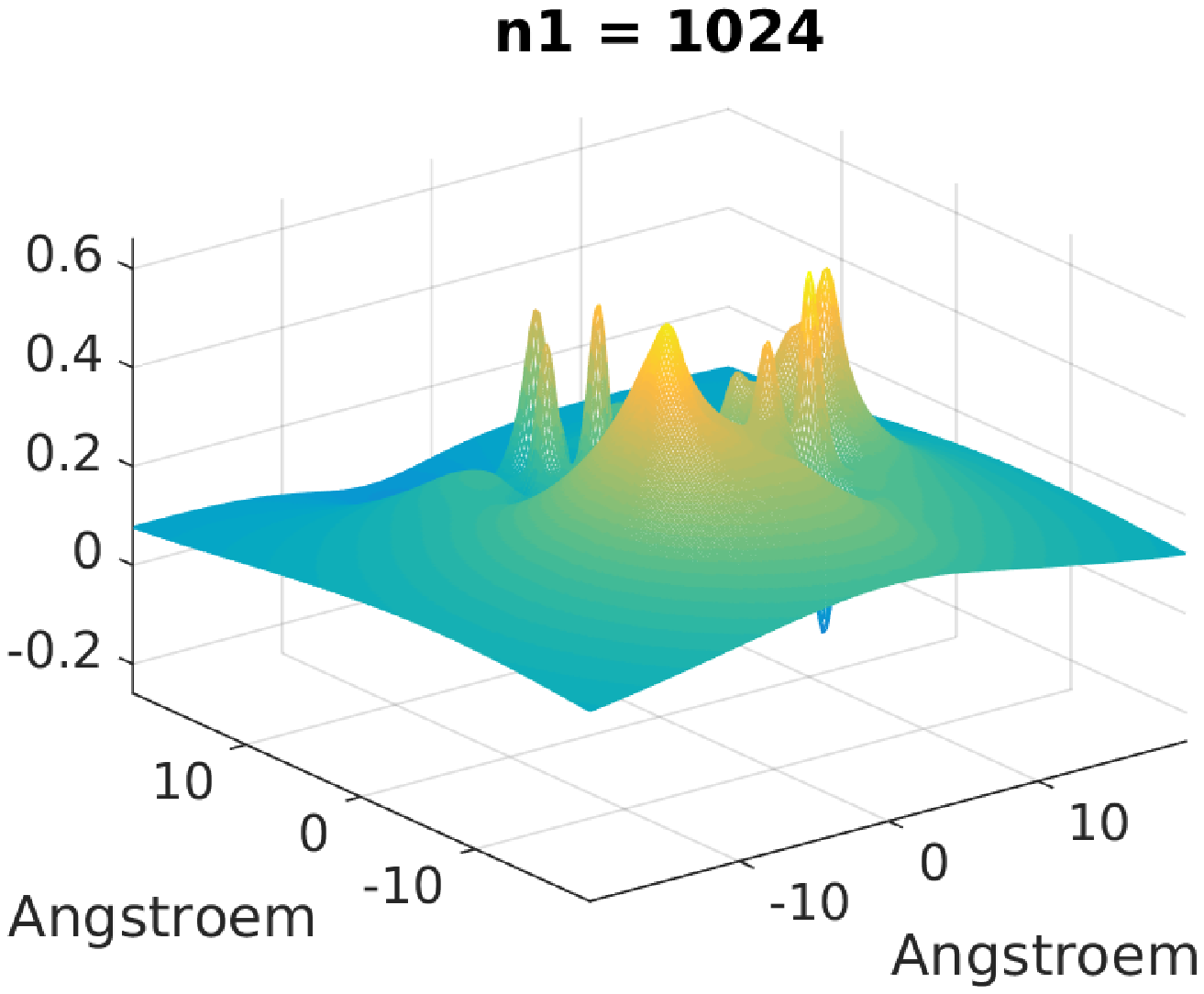} \quad
\includegraphics[width=7.2cm]{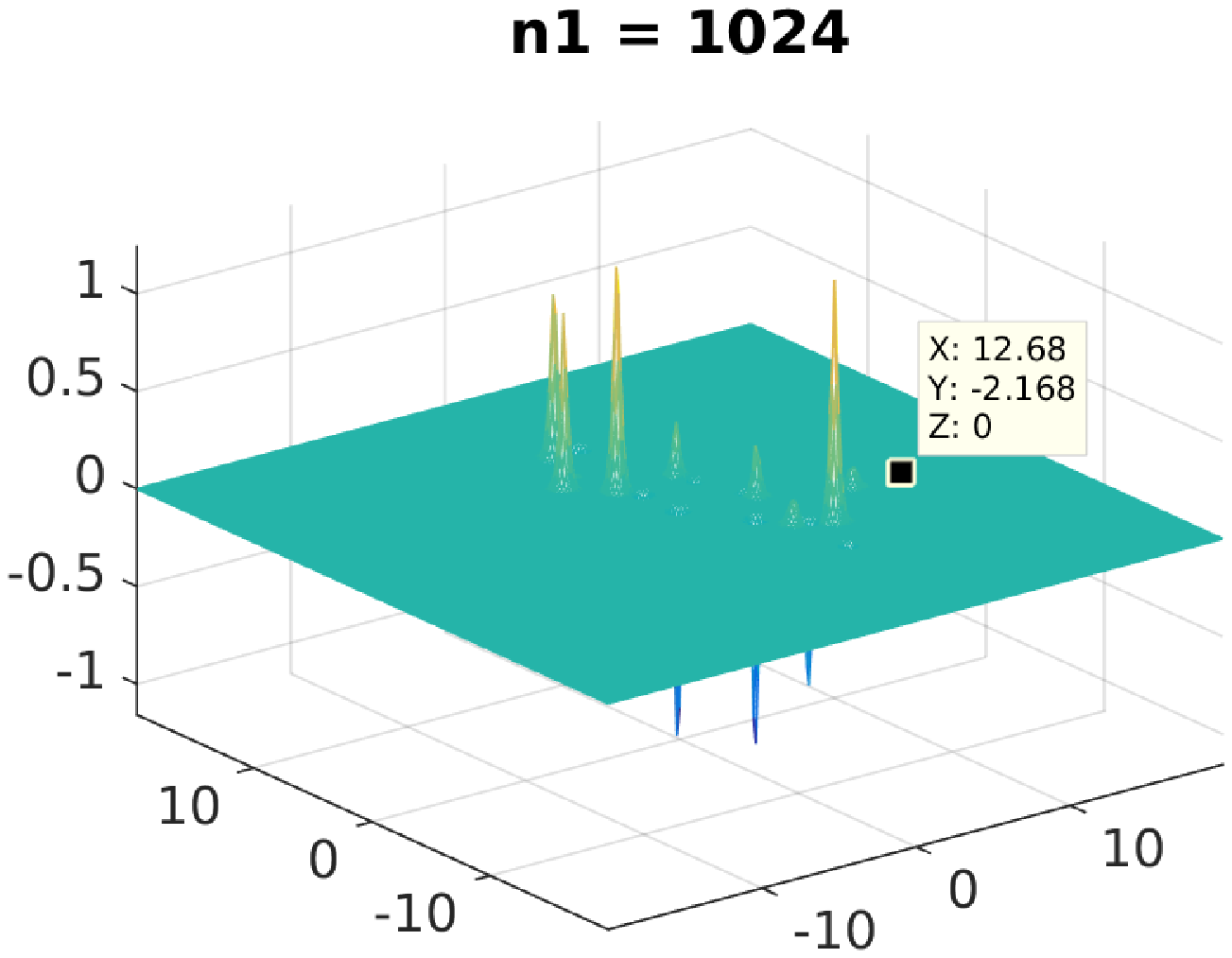}
\caption{\emph{\small Top:  the electrostatic potential sum at a middle plane of a cluster 
with 400 atoms (left), and the error of the RS-canonical approximation (right). 
Bottom:  long-range part of a sum (left), short range part of a sum (right).}}
\label{fig:Pot_Long_Sum}
\end{figure}

Figures  \ref{fig:Pot_Long_Sum} show the accuracy of the RS-canonical tensor 
approximation to the electrostatic potential of 
a cluster of 400 particles at the middle section of the
computational box $[-20,20]^3$ \AA{}, by  using an
$n\times n\times n $ 3D Cartesian grid with $n=1024$, and step size   $h=0.04$\AA{}.
The top-left figure shows the surface of the potential at the
level $z=0$, while the  top-right figure shows the absolute error of the RS approximation
with the ranks $R_l=15$, $R_s=11$, and the separation distance $\sigma^\ast = 1.5$.
Bottom figures visualize the long-range (left) and short-range (right) parts 
of the RS-tensor, representing the potential sum.

%

\begin{remark}\label{rem:RS-TT}
It is worth to note that in the case of higher dimensions, say, for $d >3$,
the local canonical tensors can be combined with the global tensor train (TT) format \cite{Osel_TT:11}
such that the simple canonical-to-TT transform can be applied.
In this case the RS-TT format can be introduced as  a set of tensor represented as a sum of CCT term and
the global TT-tensor.
The complexity and structural analysis is completely similar to the case of RS-Canonical and RS-Tucker formats.
\end{remark}

We complete this section by the short outlook of algebraic operations on the RS tensors.

\subsection{Algebraic operations on the RS canonical/Tucker tensors}
\label{ssec:Sum_ShortRange} 

Multilinear algebraic operations in the format of RS-canonical/Tucker tensor parametrization
can be implemented by using 1D vector operations
applied  to both localized and global tensor components.
In particular, the following operations on RS canonical/Tucker tensors can be 
realized efficiently: 
(a) storage of a tensor;
(b) real space representation on a fine rectangular grid;
(c) summation of many-particle interaction potentials represented on the fine tensor grid;
(d) computation of scalar products;
(e) computation of gradients and forces.


Estimates on the storage complexity for the RS-canonical and RS-Tucker formats
were presented in Lemmas \ref{lem:Property_RST_can} and \ref{lem:Property_RST_Tuck}.
Items (b) and (c) have been already addressed in the previous part.
Calculation of the scalar product of two RS-canonical tensors in the form (\ref{eqn:RS_Can}), 
defined on the same set ${\cal S}$ of particle centers, can be reduced to the standard
calculation of the cross scalar products between all elementary canonical tensors 
presented in (\ref{eqn:RS_Can}). The numerical cost can be estimated by 
$O(\frac{1}{2} R(R-1)d n + 2 \gamma R R_0 N)$.


\section{Sketch of  possible  applications}\label{sec:SepSum_Applic}

The RS tensor formats can be gainfully applied in computational problems
including functions with multiple local singularities or cusps, Green kernels with 
essentially non-local behavior, as well as in various approximation problems  treated  by means 
of radial basis functions.
In what follows, we present the brief explanations on how the RS tensor
representations can be utilized to some computationally extensive problems: 
grid representation of multi-dimensional scattered data, 
interaction energy of charged many-particle system, computation of gradients and 
forces for many-particle potentials, 
construction of approximate boundary/interface conditions  in the Poisson-Boltzmann equation
describing the electrostatic potential of proteins.
The detailed analysis of these examples will be the topic for forthcoming papers.

\subsection{Multi-dimensional data modeling}\label{ssec:DataFit_Applic}

In this section we briefly describe the model reduction approach to the problem 
of multi-dimensional data fitting based on the RS tensor approximation.
The problems of multi-dimensional scattered data modeling and data mining are known
to lead to computationally intensive simulations. 
We refer to \cite{Buhmann-book-2003,Iske-book-2004,BeGaMo:09,FoFl:15,Fornberg-book-2015,HeRoSt-book-2016}
concerning  the discussion of most commonly used approaches 
for the approximating of multi-dimensional data and partial differential equations by using the radial basis functions.

The mathematical problems in scattered data modeling are concerned with the approximation of
multi-variate function $f:\mathbb{R}^d \to \mathbb{R}$ ($d \geq 2$) by using
samples given at a certain finite set  ${\cal X}=\{x_1,\ldots,x_N\}\subset \mathbb{R}^d$
of pairwise distinct points, see e.g. \cite{Buhmann-book-2003}. 
The function $f$ may describe the surface of a solid body, the solution of a PDE, 
many-body potential field, multi-parametric characteristics of physical systems or
some other multi-dimensional data. 

In the particular problem setting 
one may be interested in recovering $f$ from a given sampling vector
$f_{|{\cal X}}=(f(x_1),\ldots,f(x_N))\in \mathbb{R}^N$. 
One of the traditional ways to tackle this problem is based on the construction 
of a suitable functional interpolant $P_N:\mathbb{R}^d \to \mathbb{R}$ satisfying 
$P_{N|{\cal X}}=f_{|{\cal X}}=:{\bf f}$, i.e.,
\begin{equation}\label{eqn:Interp_col}
 P_N(x_j)=f(x_j), \quad \forall\; 1\leq j \leq N,
\end{equation}
or approximating the sampling vector $f_{|{\cal X}}$ on the set ${\cal X}$ in the least squares sense.
We consider the approach based on using \emph{radial basis functions} 
providing the traditional tools for multivariate scattered data interpolation. 
To that end, the radial basis function (RBF)
interpolation approach deals with a class of interpolants $P_N$ in the form
\begin{equation}\label{eqn:RBF_interp}
 P_N(x) = \sum_{j=1}^N c_j p(\|x-x_j\|) + Q(x), \quad Q \; \mbox{is some smooth function},
\end{equation}
where $p:[0,\infty)\to \mathbb{R}$ is a fixed radial function, and $\|\cdot \|$ is
the Euclidean norm on $\mathbb{R}^d$. To fix the idea, here we consider the particular version 
of (\ref{eqn:RBF_interp}) by setting $Q=0$. 
Notice that the interpolation ansatz  $P_N$ in (\ref{eqn:RBF_interp}) has the same form as the 
multi-particle interaction potential in (\ref{eqn:PotSum}). This observation indicates
that the numerical treatment of various problems based on the use of interpolant $P_N$
can be handled by using the same tools of model reduction via rank-structured RS tensor 
approximation.

The particular choice of RBFs described in
\cite{Buhmann-book-2003,Iske-book-2004} includes functions $p(r)$ in the form 
$$
r^\nu, \quad (1+r^2)^\nu,\; (\nu\in \mathbb{R}),
 \quad \exp(-r^2), \quad r^2 \log (r).
$$
For our tensor based approach, the common feature of all these function classes is
the existence of low-rank tensor approximations to the grid-based discretization of the
RBF $p(\|x\|)=p(x_1,\ldots,x_d)$, $x\in \mathbb{R}^d$, where we set $r=\|x\|$.
We can extend the above examples by
traditional functions commonly used in quantum chemistry, like Coulomb potential $1/r$, 
Slater function $\exp(- \lambda r)$, Yukawa potential $\exp(- \lambda r)/r$, 
as well as to the class of Mat{\"e}rn RBFs, traditionally applied in stochastic 
modeling \cite{MaLiPaRoZa:12}.
Other examples are given by the Lennard-Jones (the Van der Waals) and dipole-dipole interaction potentials, 
\[
p(r)= 4 \epsilon 
\left[\left(\frac{\sigma }{r}\right)^{12} - \left(\frac{\sigma }{r}\right)^{6}\right],\quad \mbox{and} \quad p(r)= \frac{1}{r^3},
\]
respectively,
as well as by the Stokeslet \cite{LinTorn2:12}, specified by the $3 \times 3$ matrix
\[
p(\|x\|)= I/r + ( x x^T ) /r^3\quad  \mbox{for} \quad  x\in \mathbb{R}^3.
\]

In the context of numerical data modeling, we focus on the following computational tasks.
\begin{itemize}
 \item[(A)] For fixed coefficient vector ${\bf c}=(c_1,\ldots,c_N)^T\in \mathbb{R}^N$ find
the efficient representation and storage of the interpolant in (\ref{eqn:RBF_interp}), 
 sampled on fine tensor grid in $\Omega\subset\mathbb{R}^d$, that allows the 
 $O(1)$-fast point evaluation of $P_N$ in the whole volume $\Omega$ and computation of various 
integral-differential operations on that interpolant like, gradients, forces, scalar products,
convolution integrals, etc.
 \item[(B)] Finding the coefficient vector ${\bf c}$  that solves the 
 interpolation problem (\ref{eqn:Interp_col}).
\end{itemize}

We look on the problems (A) and (B)
with the intend to apply the RS tensor representation to the interpolant $P_N(x)$.
The point is that the representation  (\ref{eqn:RBF_interp}) can be viewed as the
many-particle interaction potential (with charges $c_j$) considered in the previous sections.
Hence, the RS tensor approximation can be successfully applied if the $d$-dimensional tensor approximating 
the RBF $p(\|x\|)$, $x\in \mathbb{R}^d$, on tensor grid,
allows the low-rank canonical representation that can be split into the short- and long-range parts.
This can be proven for functions listed above 
(see the example in \S2.2 for the Newton kernel $1/\|x\|$). Notice that the Gaussian 
is already the rank-$1$ separable function.

Problem (A).
To fix the idea, we consider the particular choice of the set ${\cal X}\subset \Omega:=[0,1]^d$, 
which can be represented by using the nearly optimal point sampling. 
The so-called optimal point sets realize the trade off between 
the separation distance 
$q_{\cal X}= \min_{s\in {\cal X}} \min_{x_\nu\in {\cal X}\setminus s} d(x_\nu,s)$,
see  (\ref{eqn:SepDist}), and the fill distance
$h_{{\cal X},\Omega}= \max_{y \in \Omega} d({\cal X},y)$, i.e. solve the 
problem, see \cite{Buhmann-book-2003},
$$
q_{\cal X} / h_{{\cal X},\Omega} \to \max.
$$

We choose the set of points ${\cal X}$ as a subset of the $n^{ \otimes} $ square  
grid $\Omega_h$ with the mesh-size $h=1/(n-1)$,
such that the separation distance satisfies $\sigma_\ast= q_{\cal X}  \geq \alpha h$, $\alpha \geq 1$. 
Here $N\leq n^d$. The square grid $\Omega_h$ realizes an example of the almost optimal 
point set (see the discussion in \cite{Iske-book-2004}).
The construction below also applies to nonuniform rectangular grids.

Now, we are in a position to apply the RS tensor representation to the total 
interpolant $P_N$. Let ${\bf P}_R$ be the $n \times n \times n$ (say, for $d=3$)
rank-$R$ tensor representing the RBF $p(\| \cdot \|)$ which allows the RS splitting 
by (\ref{eqn:Split_Tens}) generating the global RS representation (\ref{eqn:Total_Sum}).
Then $P_N(x)$ can be represented by the tensor ${\bf P}_N$ in the RS-Tucker (\ref{eqn:RS_Tucker})
or RS-canonical (\ref{eqn:RS_Can}) formats. The storage cost scales linear in both $N$ and $n$,
$O(N+ d R_l n)$.
The tensor-based computation of different functionals on ${\bf P}_N$ will be
discussed in the following sections.

Problem (B). The interpolation problem (\ref{eqn:Interp_col})
reduces to solve the linear system of equations for unknown coefficient vector 
${\bf c}=(c_1,\ldots,c_N)^T \in \mathbb{R}^N$,
\begin{equation}\label{eqn:RBF_Syst}
 A_{p,{\cal X}} {\bf c}= {\bf f}, \quad \mbox{where}\quad 
 A_{p,{\cal X}}=[p(\|x_i-x_j\|)]_{1\leq i,j\leq N}\in \mathbb{R}^{N\times N},
\end{equation}
with the symmetric matrix $A_{p,{\cal X}}$. Here, without loss of generality,
we assume that the RBF, $p(\| \cdot \|)$, is continuous.
The solvability conditions for the linear system (\ref{eqn:RBF_Syst}) with the matrix 
$A_{p,{\cal X}}$ are discussed, for example, in \cite{Buhmann-book-2003}.
We consider two principal cases.

Case (A). We assume that the point set ${\cal X}$ coincides with the set of 
grid-points in $\Omega_h$, i.e., $N=n^d$.
Introducing the $d$-tuple multi-index ${\bf i}=(i_1,\ldots,i_d)$ and 
${\bf j}=(j_1,\ldots,j_d)$, we reshape the matrix
$ A_{p,{\cal X}}$ into the tensor form 
$$ 
A_{p,{\cal X}}\mapsto {\bf A}=[a(i_1,j_1,\ldots,i_d,j_d)]\in 
  \bigotimes_{\ell=1}^d \mathbb{R}^{n\times n},
$$
which corresponds to folding of an $N$-vector to a $d$-dimensional $n^{\otimes d}$ tensor.
 This $d$-level Toeplitz matrix is generated by the tensor ${\bf P}_R$ obtained by collocation 
 of the RBF $p(\| \cdot \|)$  on the grid $\Omega_h$.
Splitting the rank-$R$ canonical tensor ${\bf P}_R$ into a sum of short- and long-range terms, 
\[
 {\bf P}_R={\bf P}_{R_s} + {\bf P}_{R_l},\quad \mbox{with}\quad 
 {\bf P}_{R_l}=\sum\limits_{k=1}^{R_l} {\bf p}_k^{(1)}\otimes \cdots \otimes {\bf p}_k^{(d)},
\]
allows to represent the matrix ${\bf A}$ in the RS-canonical form as a sum of low-rank canonical 
tensors ${\bf A}={\bf A}_{R_s} + {\bf A}_{R_l}$. 
Here, the first one corresponds to
the diagonal (nearly diagonal in the case of "soft" separation strategy) matrix 
by assumption on the locality of ${\bf P}_{R_s}$. 
The second matrix takes the form of $R_l$-term Kronecker product sum 
\[
 {\bf A}_{R_l} ={\sum}_{k=1}^{R_l} A^{(1)}_k \otimes \cdots \otimes A^{(d)}_k,
\]
where each "univariate" matrix $A^{(\ell)}_k\in \mathbb{R}^{n\times n}$, 
$\ell=1,\ldots ,d$, takes the symmetric Toeplitz form, generated by the first column vector 
${\bf p}_k^{(\ell)}$.  
The storage complexity of the resultant RS representation to the matrix ${\bf A}$
is estimated by $O(N + d R_l n)$. 
 Similar matrix decompositions can be derived for the RS-Tucker and RS-TT 
representations of ${\bf P}_R$.

Now we represent the coefficient vector ${\bf c}\in \mathbb{R}^N$ as the $d$-dimensional 
$n^{\otimes d}$ tensor, ${\bf c}\mapsto{\bf C}\in \mathbb{R}^{n^{\otimes d}}$.
Then the matrix vector multiplication ${\bf A} {\bf C}=({\bf A}_{R_s} + {\bf A}_{R_l}){\bf C}$ 
implemented in tensor format
can be accomplished in $O(cN + d R_l N \log n)$ operations, i.e., with the asymptotically optimal cost in
the number of sampling points $N$. The reason is that the matrix 
${\bf A}_{R_s}$ has the diagonal form, while the matrix-vector product
between Toeplitz matrices $A^{(\ell)}_k$ constituting the Kronecker factors ${\bf A}_{R_l}$,
and the corresponding $n$-columns (fibers) of
the tensor ${\bf C}$, can be implemented by 1D FFT in $O(n \log n)$ operations.
One can customary enhance this scheme by introducing the low-rank tensor structure in 
the target vector (tensor) ${\bf C}$.

Case (B). 
This construction can be generalized to
the situation when ${\cal X}$ is a subset of $\Omega_h$, i.e., $N< n^d$. In this case the 
complexity again scales linearly in $N$ if $N=O(n^d)$. 
In the situation when $N \ll n^d $ the matrix-vector operation applies to the vector
${\bf C}$ that vanishes beyond the small set ${\cal X}$. In this case the corresponding
block-diagonal sub-matrices in $A^{(\ell)}_k$ loose the Toeplitz form thus resulting
in the slight increase in the overall cost $O(N^{1+1/d})$.

In both cases (A) and (B) the presented new construction can be applied 
within any favorable preconditioned iteration for solving the linear system (\ref{eqn:RBF_Syst}).

\subsection{Interaction energy for charged many-particle system}\label{ssec:InterEnerg_Applic}

Consider the calculation of the interaction  energy (IE) for 
a charged multi-particle system. In the case of lattice-structured systems, 
the fast tensor-based computation scheme for IE was described in \cite{VeKhorTromsoe:15}.
Recall that the interaction energy of the total electrostatic potential generated by 
the system of $N$ charged particles located at ${x}_{k}\in \mathbb{R}^3$ ($k=1,...,N$) is defined by the 
weighted sum 
\begin{equation}\label{eqn:EnergyLatSum}
E_N =E_N(x_1,\dots,x_N)= \frac{1}{2} \sum\limits_{{j}=1}^N z_{j}
\sum\limits_{{k}=1, {k}\neq {j}}^N \frac{z_{k} }{\|{x}_{j} - {x}_{k}\|}, 
\end{equation}
where $z_{k}$ denotes the particle charge.
Letting $\sigma >0$ be the minimal physical distance between the centers of particles, 
we arrive at the $\sigma$-separable systems in the sense of 
Definition \ref{def:QU_Distrib}.
The double sum in (\ref{eqn:EnergyLatSum}) applies
only to the particle positions $\|{x}_{j} - {x}_{k}\|\geq \sigma $, hence, the quantity in 
(\ref{eqn:EnergyLatSum}) is computable also for singular kernels like $p(r)=1/r$.

We observe that the quantity of interest $E_N$ can be recast in terms 
of the interconnection matrix $A_{p,{\cal X}}$ defined by (\ref{eqn:RBF_Syst}) with $p(r)=1/r$, 
${\cal X}=\{x_1,\ldots,x_N\}$,
\begin{equation}\label{eqn:EnergySumMatr}
 E_N = \frac{1}{2} \langle (A_{p,{\cal X}} -\mbox{diag}A_{p,{\cal X}}) {\bf z}, {\bf z}   \rangle, 
 \quad \mbox{where}\quad {\bf z}=(z_1,\ldots,z_N)^T.
\end{equation}
Hence, $E_N$ can be calculated by using by using the approach briefly addressed in the previous section.

Here, we describe this scheme in the more detail. Recall that the reference canonical tensor
${\bf P}_R$ approximating the single Newton kernel on an $n\times n\times n $ tensor grid $\Omega_h$
in the computational box $\Omega=[-b,b]^3$ is represented by (\ref{eqn:sinc_general}), where $h>0$ 
is the fine mesh size.
For ease of exposition, we assume that the particle centers ${x}_{k}$
are located exactly at some grid points in $\Omega_h$
(otherwise, an additional approximation error may be introduced) such that
each point ${x}_{k}$ inherits some multi-index ${\bf i}_{k}\in {\cal I}$, and the origin
$x=0$ corresponds to  the central point on the grid, ${\bf n}_0=(n/2,n/2,n/2)$.
In turn, the canonical tensor ${\bf P}_0$ approximating the total interaction potential 
$P_N(x)$ ($x\in \Omega$) for the $N$-particle system,
\[
 P_N(x)=\sum\limits_{{k}=1}^N \frac{z_{k} }{\|{x} - {x}_{k}\|}\, \leadsto \,
 {\bf P}_0 = {\bf P}_s + {\bf P}_l\in \mathbb{R}^{n\times n\times n},
\]
is represented by (\ref{eqn:Total_Sum}) as a sum of short- and long-range tensor
components.
Now the tensor ${\bf P}_0={\bf P}_0(x^h)$ is defined at each point $x^h\in \Omega_h$, 
and, in particular, 
in the vicinity of each particle center $x_k$, i.e. at the grid-points $x_k + h{\bf e}$, where
the directional vector ${\bf e}=(e_1,e_2,e_3)^T$ is specified by some choice of coordinates
$e_\ell \in\{-1,0,1\}$ for $\ell=1,2,3$. This allows to introduce the useful notations 
${\bf P}_0(x_k + h{\bf e})$ which can be applied to all tensors leaving on $\Omega_h$.
Such notations simplify the definitions of entities like energy, gradients, forces, etc. 
applied to the RS tensors.

The following lemma describes the tensor scheme for calculating $E_N$ by 
utilizing the long-range part ${\bf P}_l$ only in the tensor representation of $P_N(x)$.
\begin{lemma}\label{lem:InterEnergy}
 Let the effective support of the short-range components in the reference
 potential ${\bf P}_R$ do not exceed $\sigma>0$.
 Then the interaction energy $E_N$ of the $N$-particle system  can be
 calculated by using only the long range part  in the total potential sum
 \begin{equation}\label{eqn:EnergyFree_Tensor}
 E_N =E_N(x_1,\dots,x_N)= \frac{1}{2} 
 \sum\limits_{{j}=1}^N z_{j}({\bf P}_l({x}_{j}) - z_j {\bf P}_{R_l}(x=0)),
\end{equation}
in $O(d R_l N)$ operations, where $R_l$ is the canonical rank of the long-range component.
\end{lemma}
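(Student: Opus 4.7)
My plan is to expand each Newton kernel term $1/\|x_j - x_k\|$ using the grid-based low-rank tensor $\mathbf{P}_R$ and then exploit the range-separation identity $\mathbf{P}_R = \mathbf{P}_{R_s} + \mathbf{P}_{R_l}$ to eliminate the short-range contribution in every off-diagonal ($j\neq k$) pair interaction. The key observation is that the $\sigma$-separability assumption $\|x_j-x_k\|\geq \sigma$ combined with the assumed bound $\sigma$ on the effective support of $\mathbf{P}_{R_s}$ means that for any $j\neq k$, the shifted short-range tensor $\mathcal{W}_k(\widetilde{\mathbf{P}}_{R_s})$ evaluated at the grid node $x_j$ is zero (or below the chosen threshold). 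Hence only the long-range part contributes to the pair energies.

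First I would write the reference potential at the shifted grid-point $x_j - x_k$ as $\mathbf{P}_R(x_j-x_k) = \mathbf{P}_{R_l}(x_j - x_k) + \mathbf{P}_{R_s}(x_j-x_k)$, and invoke the support condition to drop the second summand whenever $j\neq k$. Substituting into the definition \eqref{eqn:EnergyLatSum} yields
\[
E_N \;=\; \tfrac{1}{2}\sum_{j=1}^N z_j \sum_{k\neq j} z_k\,\mathbf{P}_{R_l}(x_j - x_k).
\]
Next, I would recognize the inner sum as a truncated evaluation of the assembled long-range tensor $\mathbf{P}_l$ from \eqref{eqn:Long-Range_Sum}: by construction
\[
\mathbf{P}_l(x_j) \;=\; \sum_{k=1}^N z_k \,\mathcal{W}_k(\widetilde{\mathbf{P}}_{R_l})(x_j) \;=\; z_j\,\mathbf{P}_{R_l}(x=0) \;+\; \sum_{k\neq j} z_k\,\mathbf{P}_{R_l}(x_j - x_k),
\]
so the missing $k=j$ term is exactly the self-interaction constant $z_j\,\mathbf{P}_{R_l}(0)$. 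Subtracting it and summing over $j$ gives \eqref{eqn:EnergyFree_Tensor}.

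It remains to certify the $O(d R_l N)$ cost. Since $\mathbf{P}_l$ is stored as a canonical tensor of rank $R_l$ (before any additional RHOSVD compression) with univariate factor vectors, a pointwise evaluation $\mathbf{P}_l(x_j)$ — equivalently extracting a single entry of a canonical tensor at the grid multi-index corresponding to $x_j$ — costs $O(d R_l)$ multiplications and additions along the mode-$\ell$ skeleton vectors. Performing this for all $N$ particle centers, plus one precomputation of the scalar $\mathbf{P}_{R_l}(0)$ at cost $O(R_l)$, yields the claimed total complexity.

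The only delicate point is the passage from an approximate to an exact identity. Strictly speaking one should state $E_N$ up to an additive error controlled by (i) the sinc-quadrature error bound \eqref{sinc_conv} governing $\mathbf{P}_R \approx 1/\|x\|$, and (ii) the cutoff threshold $\delta$ in the support criterion \eqref{eqn:Split_crit_max}–\eqref{eqn:Split_crit_L2}. I expect the main obstacle (and the place where the exposition has to be most careful) to be making precise the hypothesis ``effective support does not exceed $\sigma$'': one must verify that the separation distance $q_{\mathcal{X}}\geq\sigma$ of Definition~\ref{def:QU_Distrib} dominates the cutoff radius used when choosing $R_l$, so that every off-diagonal shift $\mathbf{P}_{R_s}(x_j-x_k)$ falls below the prescribed tolerance, while the claimed equality is understood modulo that tolerance.
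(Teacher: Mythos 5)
Your proposal is correct and follows essentially the same argument as the paper: both proofs rest on (i) observing that the $\sigma$-separability of the centers together with the $\sigma$-bound on the effective support of $\mathbf{P}_{R_s}$ forces every off-diagonal shifted short-range term to vanish, so only the long-range part survives in the pair interactions; (ii) identifying the $k=j$ self-interaction inside $\mathbf{P}_l(x_j)$ as the constant $z_j\,\mathbf{P}_{R_l}(x=0)$ to be subtracted; and (iii) the $O(dR_l)$ cost of a single entry evaluation of a rank-$R_l$ canonical tensor. The only cosmetic difference is ordering (you split the kernel pair-by-pair before reassembling $\mathbf{P}_l$, the paper first forms $\mathbf{P}_0(x_j)-z_j\mathbf{P}_R(0)$ and then splits), and your closing remark making the ``effective support'' hypothesis precise as an approximate, $\delta$-controlled identity is a welcome clarification that the paper leaves implicit via the $\leadsto$ notation.
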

\begin{proof} 
Similar to \cite{VeKhorTromsoe:15}, where the case of lattice structured systems was analyzed,
we show that the interior sum in (\ref{eqn:EnergyLatSum})
can be obtained from  the tensor ${\bf P}_0$
traced onto the centers of particles ${x}_{k}$, 
where the term corresponding to ${x}_{j}={x}_{k}$ is removed, 
\[
 \sum\limits_{{k}=1, {k}\neq {j}}^N \frac{z_{k} }{\|{x}_{j} - {x}_{k}\|}
  \, \leadsto \, {\bf P}_0({x}_{j}) - z_{j}{\bf P}_R(x=0).
\]
Here the value of the reference canonical tensor
${\bf P}_R$, see (\ref{eqn:sinc_general}), is evaluated at the origin $x=0$, i.e., corresponding 
to the multi-index ${\bf n}_0=(n/2,n/2,n/2)$. Hence, we arrive at the tensor approximation
\begin{equation}\label{eqn:EnergyFree_TensF}
 E_N \, \leadsto \, \frac{1}{2} 
 \sum\limits_{{j}=1}^N z_{j}({\bf P}_0({x}_{j}) - z_j {\bf P}_R(x=0)).
\end{equation}
Now we split ${\bf P}_0$ into the long-range part (\ref{eqn:Long-Range_Sum})
and the remaining short-range potential, to obtain
${\bf P}_0({x}_{j})= {\bf P}_s({x}_{j}) + {\bf P}_l({x}_{j})$, and the same for the reference
tensor ${\bf P}_R$.
By assumption, the short-range part ${\bf P}_s({x}_{j})$ at point ${x}_{j}$ 
in (\ref{eqn:EnergyFree_TensF}) consists only of the local term $P_{R_s}(x=0)=z_j {\bf P}_R(x=0)$.
Due to the corresponding cancellations in the right-hand side of
(\ref{eqn:EnergyFree_TensF}), we find that $E_N$ depends only on ${\bf P}_l$,
leading to the final tensor representation in (\ref{eqn:EnergyFree_Tensor}).

We arrive at the linear complexity scaling $O(d R_l N)$ taking into account the $O(d R_l )$
cost of the point evaluation for the canonical tensor ${\bf P}_l$.
\end{proof}

\begin{table}[tbh]
\begin{center}\footnotesize
\begin{tabular}
[c]{|r|r|r|r|r|r| }%
\hline
 3D Grid, mesh size        &  $N$   &  $100$      & $200$       & $400$       & $782$ \\
 \cline{2-6}    &  Exact $E_N$         & $-8.4888$  & $-18.1712$  & $-35.9625$  & $-90.2027$ \\
\hline
 $4096^3$,  $1.37\cdot 10^{-2}$  & $E_N-E_{N,T}$   & $0.01$ & $2\cdot 10^{-5}$ & $0.0034$ &  $0.0084$  \\
\cline{2-6}& $(E_N-E_{N,T})/E_N$ & $0.0012$ & $10^{-5}     $ & $10^{-4}$ &  $3\cdot 10^{-4}$  \\
\hline
\hline
$8192^3$,  $6.8\cdot 10^{-3}$ & $E_N-E_{N,T}$  & $0.0028$     & $0.005$             & $0.0074$ &  $0.0245$  \\
\cline{2-6}& $(E_N-E_{N,T})/E_N$ & $3\cdot 10^{-4}$ & $3\cdot 10^{-4}$ & $2\cdot10^{-4}$ &  $3\cdot 10^{-4}$  \\
  \hline
  \hline
$16384^3$,  $3.4\cdot 10^{-3}$ & $E_N-E_{N,T}$  & $0.0021$     & $0.0013$             & $0.0039$ &  $0.0053$  \\
\cline{2-6}& $(E_N-E_{N,T})/E_N$ & $2\cdot 10^{-4}$ & $  10^{-4}$ & $ 10^{-4}$ &  $  10^{-4}$  \\
  \hline
 \end{tabular}
\caption{\emph{\small Absolute and relative errors in the interaction energy of
 $N$-particle clusters  computed by full canonical tensor approximation ($R_s=0$)}. 
}
\label{Tab:EN_NU_sr0}
\end{center}
\end{table}
Table \ref{Tab:EN_NU_sr0} shows the error of energy computation by (\ref{eqn:EnergyFree_TensF})
using the tensor summation with full rank canonical tensors. We use the data for protein type 
molecular system provided by the authors of \cite{KwHeFeStBe:16}.
This table indicates that the relative error
of tensor computation remains of the order of $10^{-4}$ for the considered range of grid-size and
the number molecular clusters. The canonical ranks $R$ of the reference kernel are given by $34$, $37$, and $39$
for $n\times n \times n$ grids with $n$ equal to $4096$, $8192$ and $16384$, respectively. 
CPU times for energy computation with these sizes of molecular clusters are small for 
both tensor and the straightforward calculation schemes. Tensor computation on the grid of size
$n^3=8192^3$ takes $8\cdot 10^{-4}$s, while 
using the original  formula (\ref{eqn:EnergyLatSum}) it amounts to $0.04$s.

Table \ref{Tab:EN_NU_sr13} presents the error of energy computation by (\ref{eqn:EnergyFree_TensF})
by using the RS tensor format with $R_l=14$ and $R_s=13$.
Remarkably, that the approximation error does not exceed the errors in Table \ref{Tab:EN_NU_sr0}.

\begin{table}[tbh]
\begin{center}\footnotesize
\begin{tabular}
[c]{|r|r|r|r|r|r| }%
\hline
 3D Grid, mesh size        &  $N$  & $100$      & $200$       & $400$       & $782$ \\
 \cline{2-6}    &  Exact $E_N$         & $-8.4888$  & $-18.1712$  & $-35.9625$  & $-90.2027$ \\
\hline
 $4096^3$,  $1.37\cdot 10^{-2}$  & $E_N-E_{N,T}$   & $0.0044$ & $0.0191$ & $0.0265$ &  $0.1254$  \\
\cline{2-6}& $(E_N-E_{N,T})/E_N$ & $0.0005$ & $0.0002$ & $0.0007$ &  $0.0014$  \\
\hline
\hline
$8192^3$,  $6.8\cdot 10^{-3}$ & $E_N-E_{N,T}$  & $0.0010$ & $0.0044$  & $0.0074$ &  $0.0064$  \\
\cline{2-6}& $(E_N-E_{N,T})/E_N$ & $ 10^{-4}$ & $2\cdot 10^{-4}$ & $2\cdot10^{-4}$ &  $ 10^{-4}$  \\
  \hline
  \hline
$16384^3$,  $3.4\cdot 10^{-3}$ & $E_N-E_{N,T}$  & $0.0015$ & $0.0010$   & $0.002$ &  $0.0001$  \\
\cline{2-6}& $(E_N-E_{N,T})/E_N$ & $2\cdot 10^{-4}$ & $ 10^{-4}$ & $ 10^{-4}$ &  $10^{-5}  $  \\
  \hline
 \end{tabular}
\caption{\emph{\small Absolute and relative errors in the interaction energy of
 $N$-particle clusters computed by RS-tensor approximation with $R_l=14$,  ($R_s=13$)}.
}
\label{Tab:EN_NU_sr13}
\end{center}
\end{table}

Table \ref{Tab:Error_E_Nucl} represents the approximation error in $E_N$ computed
by RS tensor representation (\ref{eqn:EnergyFree_Tensor}) for the different values of system size.
Grid size is $n^3=4096^3$, $h=0.0137$, canonical rank for the reference tensor is $R=29$.
The short range part of the RS tensor is taken as $R_s=10$. 
\begin{table}[tbh]
\begin{center}\footnotesize
\begin{tabular}
[c]{|r|r|r|r|r|r|r|r|}%
\hline
$N$             & $200$     & $300$     & $400$     & $500$  & $600$ & $700$ \\
\hline
Exact $E_N$   & $-17.91$ & $-26.47$ & $-35.56$  & $-47.1009$  & $-62.32$ & $-77.47$ \\
\hline \hline
$E_N-E_{N,T}$   & $0.0018$ & $0.0004$ & $0.0026$  & $0.0083$  & $0.019$ & $0.017$ \\
\hline  
$(E_N-E_{N,T})/E_N$  & $6\cdot 10^{-5}$ & $9\cdot 10^{-7}$ & $3.8\cdot 10^{-5}$  & $2.4\cdot 10^{-4}$ &
$3.0\cdot 10^{-4}$ & $2.0\cdot 10^{-4}$ \\
\hline 
 \end{tabular}
\caption{\emph{\small Error in the interaction energy of
clusters of $N$ particles computed by the RS tensor approach ($R_s=10$).} }
\label{Tab:Error_E_Nucl}
\end{center}
\end{table}

Table \ref{Tab:EN_NU_LatRand} shows the results for several clusters of particles generated by
random assignment of charges $z_j$ to finite lattices of size $8^3$, $12^3$, $16\times 16 \times 8$ and $16^3$.
Newton kernel is approximated with $\varepsilon_N=10^{-4}$ on the grid of size $n^3=4096^3$, $h=0.0137$, 
with the rank $R=25$. Computation of the
interaction energy was performed using the only long-range part with $R_l=12$. 
For the rank reduction the 
multigrid C2T algorithm is used \cite{khor-ml-2009}, with the rank truncation
parameters $\varepsilon_{C2T}=10^{-5}$, $\varepsilon_{T2C}=10^{-6}$.
The box size is about $40\times 40\times 40$ atomic units, with the mesh size $h=0.0098$. 
\begin{table}[tbh]
\begin{center}\footnotesize
\begin{tabular}
[c]{|r|r|r|r|r| }%
\hline
$N$ of particles  &  $512$      & $1728$       & $2048$       & $4096$ \\
\hline   
\hline  
 Exact $E_N$     & $51.8439$  & $-133.9060$  & $-138.5562$  & $-207.8477$ \\
\hline
 $E_N-E_{N,T}$  & $0.1145$ & $0.1317$ & $0.2263$ &  $0.2174$  \\
 \hline
 $(E_N-E_{N,T})/E_N$  & $0.0022$ & $0.001$   & $0.0016$ &  $0.001$  \\
\hline
\hline  
 Ranks full can. & 12800 & 43200   & 51200 &  102400  \\
  \hline
  Tucker ranks  & 31, 29, 30 & 43,42,43   & 51, 51, 33 & 53,54,54  \\
  \hline
  Reduced RS rank & 688 & 1248  & 1256 & 1740  \\
  \hline
  \hline  
 Time stand. Sum. & 0.011 & 0.12  & 0.18 & 0.79  \\
  \hline
 Time  Tens Sum. & $2\cdot 10^{-5}$ & $6\cdot 10^{-5}$  & $7\cdot 10^{-5}$ & $1.5\cdot 10^{-4}$  \\
  \hline
 \end{tabular}
\caption{\emph{\small Errors in the interaction energy of $N$-particle
clusters computed by RS tensor approximation with 
the long-range rank parameter $R_l=12$ ($R_s=13$)}.
}
\label{Tab:EN_NU_LatRand}
\end{center}
\end{table}

Table \ref{Tab:EN_NU_LatRand} illustrates that
the relative accuracy of energy calculations by using the RS tensor format remains of the order
of $10^{-3}$ almost independent of the cluster size. Tucker ranks only slightly increase 
with the system size $N$.
The computation time for the tensor ${\bf P}_l$ remains almost constant, 
while the point evaluations time for this tensor (with pre-computed data) 
increases linearly in $N$, see Lemma \ref{lem:InterEnergy}.

\subsection{Gradients and forces}\label{ssec:Forces_Applic}

 Calculation of electrostatic forces and gradients of the interaction potentials 
 in multiparticle systems is a computationally extensive problem.
The algorithms based on Ewald summation technique were discussed in \cite{DesHolmII:98,HoEa:88}.
We propose an alternative approach using the RS tensor format. 

First, we consider computation of gradients.
Given an RS-canonical tensor ${\bf A}$ as in (\ref{eqn:RS_Can}) with the width parameter $\gamma >0$, 
the discrete gradient $\nabla_h=(\nabla_1,\ldots,\nabla_d)^T$ applied to the long-range
part in ${\bf A}$ at all grid points of $\Omega_h$ simultaneously,  
can be calculated as the $R$-term canonical tensor by applying the simple
one-dimensional finite-difference (FD) operations to the long-range 
part in ${\bf A}={\bf A}_s + {\bf A}_l$,
\begin{equation}\label{eqn:Gradients_Tens}
\nabla_h {\bf A}_l= {\sum}_{k =1}^{R} \xi_k ({\bf G}_k^{(1)},\ldots,{\bf G}_k^{(d)})^T,
\end{equation}
with tensor entries
\[
 {\bf G}_k^{(\ell)} ={\bf u}_k^{(1)}  \otimes \cdots\otimes 
 \nabla_\ell {\bf u}_k^{(\ell)} \otimes \cdots\otimes{\bf u}_k^{(d)},
\]
where $\nabla_\ell$ ($\ell=1,\dots,d$) is the univariate FD differentiation scheme
(by using backward or central differences). Numerical complexity of
the representation (\ref{eqn:Gradients_Tens}) can be estimated by $O(d R n )$ provided that
the canonical rank is almost uniformly bounded in the number of particles. 
The gradient operator applies locally to each short-range term in (\ref{eqn:RS_Can})
which amounts in the complexity $O(d R_0 \gamma N)$.

The gradient of an RS-Tucker tensor, for example, for evaluation of the field 
\[
 {\bf F}(x) =-\nabla P(x)=  \sum\limits_{{k}=1, {k}\neq {j}}^N  z_{k} 
 \frac{{x} - {x}_{k} }{\|{x} - {x}_{k}\|^3},
\]
can be calculated in a similar way. 
Furthermore, in the setting of \S\ref{ssec:InterEnerg_Applic}, 
the force vector ${\bf F}_j$ on the particle $j$ is obtained by differentiating the 
electrostatic potential energy $E_N(x_1,\dots,x_N)$ with respect to $x_j$,
\[
 {\bf F}_j=-\frac{\partial}{\partial x_j} E_N = - \nabla_{| x_j} E_N,
\]
which can be calculated explicitly (see \cite{HoEa:88}) in the form,
\[
 {\bf F}_j = \frac{1}{2} z_{j} \sum\limits_{{k}=1, {k}\neq {j}}^N  z_{k} 
 \frac{{x}_{j} - {x}_{k} }{\|{x}_{j} - {x}_{k}\|^3}.
\]
The Ewald summation technique for force calculations 
was presented in \cite{DesHolmII:98,HoEa:88}.
In principle, it is possible to construct the RS tensor representation for this vector 
field directly by using the radial basis function $p(r)=1/r^2$.

Here we describe the alternative approach based on 
numerical differentiation of the energy functional by using RS tensor representation 
of the $N$-particle interaction potential on fine spacial grid.
The differentiation in RS-tensor format with respect to $x_j$
is based on the explicit representation (\ref{eqn:EnergyFree_Tensor}),
which can be rewritten in the form
 \begin{equation}\label{eqn:EnergyFree_TensForce}
 E_N(x_1,\dots,x_N)= \widehat{E}_N(x_1,\dots,x_N)
 -  \frac{1}{2} (\sum\limits_{{j}=1}^N z_{j}^2) {\bf P}_{R_l}(x=0),
\end{equation}
where $\widehat{E}_N(x_1,\dots,x_N)= \frac{1}{2} \sum\limits_{{j}=1}^N z_{j}{\bf P}_l({x}_{j}) $
denotes the "non-calibrated" interaction energy with the long-range tensor component ${\bf P}_l$.
In the following discussion, for definiteness, we set $j=N$. 
Since the second term in (\ref{eqn:EnergyFree_TensForce}) does not depend on the particle positions 
it can be omitted in calculation of variations in $E_N$  with respect to $x_N$. 
Hence we arrive at the representation for 
the first difference in direction ${\bf e}_i$, $i=1,2,3$,
\[
 E_N(x_1,\dots,x_N) - E_N(x_1,\dots,x_N-h{\bf e}_i)=  
 \widehat{E}_N(x_1,\dots,x_N) - \widehat{E}_N(x_1,\dots,x_N-h{\bf e}_i).
\]
The straightforward implementation of the above relation for three different values of
${\bf e}_1=(1,0,0)^T$, ${\bf e}_2=(0,1,0)^T$ and ${\bf e}_3=(0,0,1)^T$
is reduced to four calls of the basic
procedure for computation of the tensor ${\bf P}_l$ corresponding to \emph{four} different dispositions 
of points $x_1,...,x_N$ leading to the cost $O(d R n )$.

However, the factor \emph{four} can be reduced to merely one taking into account that 
the two canonical/Tucker tensors ${\bf P}_l$ computed for particle 
positions $(x_1,\dots,x_{N-1},x_N)$ and $(x_1,\dots,x_{N-1},x_N-h{\bf e})$ differ 
in a small part (since positions $x_1,\dots,x_{N-1}$ remain fixed). 
This requires only minor modifications compared with the repeating the
full calculation of $\widehat{E}_N(x_1,\dots,x_N)$.

\subsection{Regularization scheme for the Poisson-Boltzmann equation}\label{ssec:PBE_Applic}

We describe the application scheme to the Poisson-Boltzmann equation (PBE)
commonly used for numerical modeling of the electrostatic potential of proteins.
The traditional numerical approaches to PBE are based on either multigrid  
\cite{Holst:2008} or domain decomposition \cite{CaMaSt:13} methods.

Consider a solvated biomolecular system modeled by dielectrically separated  domains
with singular Coulomb potentials distributed in the molecular region.
For schematic representation, we consider the system occupying a rectangular domain $\Omega$
with boundary $\partial \Omega$, see Fig. \ref{fig:Protein}. The solute (molecule) region 
is represented by $\Omega_m$ and the solvent region by $\Omega_s$.
\begin{figure}[htb]
\centering
\includegraphics[width=6.5cm]{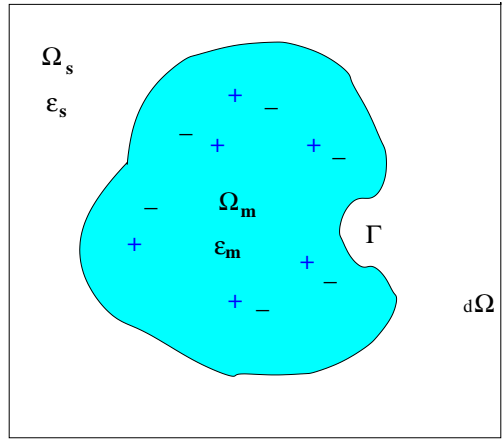} 
\caption{Computational domain for PBE. }
\label{fig:Protein}
\end{figure}

The linearized Poisson-Boltzmann equation takes a form, see \cite{Holst:2008},
\begin{equation}\label{eqn:PBE}
 -\nabla\cdot(\epsilon \nabla u)+ \kappa^2 u=\rho_f\quad \mbox{in } \quad \Omega,
\end{equation}
where $u$ denotes the target electrostatic potential of a protein, and 
$\rho_f= \sum\limits_{{k}=1}^N z_k \delta(\|{x} - {x}_{k}\|)$ 
is the scaled singular charge distribution supported at points $x_k$ in $\Omega_m$, where
$\delta$ is the Dirac delta. 
Here $\epsilon=1$ and $\kappa=0$ in $\Omega_m$, while in the solvent region $\Omega_s$ 
we have $\kappa \geq 0$ and $\epsilon\leq 1$.
The boundary conditions on the external boundary
$\partial\Omega$ can be specified depending on the particular problem setting.
For definiteness, we impose the simplest Dirichlet boundary condition $u_{| \partial \Omega}=0$.
The interface conditions on the interior boundary 
$\Gamma=\partial \Omega_m$ arise from the dielectric theory:
\begin{equation}\label{eqn:Intcond_PBE}
 [u]=0, \quad \left[ \epsilon\frac{\partial u}{\partial n}  \right]\quad \mbox{on}\quad \Gamma.
\end{equation}
The practically useful solution methods for the PBE are based on regularization schemes aiming at removing
the singular component from the potentials in the governing equation.
Among others, we consider one of the most commonly used approaches based on the additive splitting 
of the potential only in the molecular region $\Omega_m$, see \cite{Holst:2008}.
To that end we introduce the additive splitting
\[
 u=u^r +u^s, \quad \mbox{where}\quad u^s=0 \quad \mbox{in}\quad \Omega_s,
\]
and where the singular component satisfies the equation
\begin{equation}\label{eqn:us_PBE}
-\epsilon_m \Delta u^s = \rho_f\quad \mbox{in}\quad  {\Omega}_m; 
\quad u^s=0 \quad \mbox{on}\quad {\Gamma}.
\end{equation}
Now equation (\ref{eqn:PBE}) can be transformed to that for the regular potential $u^r$:
\begin{equation}\label{eqn:Regul_PBE}
 -\nabla\cdot(\epsilon \nabla u^r)+ \kappa^2 u^r=\rho_f\quad \mbox{in } \quad \Omega,
\end{equation}
\[
 [u^r]=0,\quad 
 \left[ \epsilon\frac{\partial u^r}{\partial n}\right]=
 -\epsilon_m \frac{\partial u^s}{\partial n}, 
 \quad \mbox{on}\quad \Gamma.
\]
To facilitate the solution of equation (\ref{eqn:us_PBE}) with singular data 
we define the singular potential $U$ in the free space by 
\[
\epsilon_m \Delta U = \rho_f \quad  \mbox{in}\quad \mathbb{R}^3,
\]
and introduce its restriction $U^s$ onto $\Omega_m$,
\begin{equation*}\label{eqn:singularPot_PBE}
 U^s=U_{|\overline{\Omega}_m}\quad \mbox{in}\quad  \overline{\Omega}_m; 
 \quad U^s=0\quad \mbox{in}\quad  {\Omega}_s.
\end{equation*}
Then we have $u^s=U^s + u^h$, where a harmonic function $u^h$ compensates the discontinuity 
of $U^s$ on $\Gamma$,
\begin{equation*}\label{eqn:Harmonic}
 \Delta u^h = 0 \quad \mbox{in}\quad  {\Omega}_m; \quad u^h=-U^s \quad\mbox{on}\quad \Gamma.
 \end{equation*}

The advantage of this formulation is due to (a) the absence of singularities in the
solution $u^r$, and (b) the localization of the solution splitting only on the domain
$\Omega_m$. Calculating the singular potential $U$ which may include a sum of hundreds 
or even thousands of
single Newton kernels in 3D leads to a challenging computational problem.
In our approach it can be represented on large tensor grids with controlled 
precision by using the range separated tensor formats described above. 
The long-range component in the formatted parametrization
remains smooth and allows global low-rank representation. 
 The approach can be combined with the reduced model approach for PBE with 
the parametric coefficients \cite{KwHeFeStBe:16}.

It is worth noting that the short-range part in the tensor representation of $U$ 
does not contribute to the right-hand side in the 
interface conditions on $\Gamma$ in equation (\ref{eqn:Regul_PBE}).
This crucial simplification is possible since the physical distance between the atomic centers 
in protein modeling is bounded from below by the fixed constant $\sigma >0$,
while the effective support of the 
localized parts in the tensor representation of $U$ can be chosen as the half of $\sigma$.
Moreover, all normal derivatives can be easily
calculated by differentiation of univariate canonical vectors in the long-range part of the 
electrostatic potential $U$ precomputed a on fine tensor grid in $\mathbb{R}^3$ 
(see \S\ref{ssec:Forces_Applic}).
Hence, the numerical cost to build up the interface conditions in (\ref{eqn:Regul_PBE})
becomes negligible compared with the solution of the equation (\ref{eqn:Regul_PBE}).
We conclude with the following 
\begin{proposition}\label{pro:PBE_RSTensor}
Let the effective support of the short-range components in the reference
 potential ${\bf P}_R$ be chosen  not larger than $\sigma/2$. Then 
 the interface conditions in the regularized formulation (\ref{eqn:Regul_PBE}) of the PBE 
 depend only on the low-rank long-range component in the free-space electrostatic potential
 of the system. The numerical cost to  build up the interface conditions 
 on $\Gamma$ in (\ref{eqn:Regul_PBE})  does not depend on the number of particles $N$.
\end{proposition}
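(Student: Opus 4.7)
The plan is to verify the two claims of Proposition~\ref{pro:PBE_RSTensor} in sequence. First, expand the free-space singular potential as $U = U_l + U_s$ using the RS tensor decomposition from \S\ref{ssec:Cumulat_CanTens}, where $U_s = \sum_{\nu=1}^N z_\nu \, {\cal W}_\nu(\widetilde{\mathbf{P}}_{R_s})$ is a cumulated canonical tensor whose $\nu$-th component is effectively supported in a ball of radius at most $\sigma/2$ around the atomic center $x_\nu$, and $U_l$ is the globally smooth low-rank long-range component. The geometric input, implicit in the protein-modeling setting, is that the molecular interface $\Gamma=\partial\Omega_m$ is constructed (e.g., as a van der Waals or solvent-accessible surface) so that every atomic center $x_\nu$ lies at distance at least $\sigma/2$ from $\Gamma$; combined with the choice of short-range support, this yields $U_s|_\Gamma \equiv 0$ and $(\partial U_s/\partial n)|_\Gamma \equiv 0$ up to the prescribed threshold.

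From here the first claim follows by unfolding the splitting $u^s = U^s + u^h$. Since $U^s = U|_{\overline\Omega_m} = U_l + U_s$ in $\Omega_m$, the traces on $\Gamma$ reduce to $U^s|_\Gamma = U_l|_\Gamma$ and $\partial U^s/\partial n|_\Gamma = \partial U_l/\partial n|_\Gamma$. The harmonic compensator $u^h$ solves $\Delta u^h=0$ in $\Omega_m$ with $u^h|_\Gamma = -U^s|_\Gamma = -U_l|_\Gamma$, so both $u^h$ and $\partial u^h/\partial n|_\Gamma$ are determined solely by the boundary values of $U_l$. The interface jump $[\epsilon\,\partial u^r/\partial n] = -\epsilon_m\,\partial u^s/\partial n = -\epsilon_m\bigl(\partial U_l/\partial n + \partial u^h/\partial n\bigr)$ on $\Gamma$ therefore depends exclusively on the long-range component $U_l$, as asserted.

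For the second claim, I invoke Theorem~\ref{thm:Rank_LongRange}: the Tucker rank (and, after Tucker-to-canonical recompression, the canonical rank) of $U_l$ is bounded by $C\, b \log^{3/2}|\log(\varepsilon/N)|$, essentially independent of $N$. Each pointwise evaluation of $U_l$ and of its normal derivative at a grid node of the discretized interface $\Gamma_h$ then costs $O(R_l)$ via the one-dimensional finite-difference formula~(\ref{eqn:Gradients_Tens}) applied to the univariate canonical factors, as in \S\ref{ssec:Forces_Applic}. The cardinality $\#\Gamma_h$ is fixed by the geometry of $\Gamma$ and by the ambient grid $\Omega_h$, not by the number of particles, so the total assembly cost of the interface data is $O(R_l \cdot \#\Gamma_h)$, independent of $N$ in the sense claimed.

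The main technical obstacle is the rigorous quantification of the statement $U_s|_\Gamma \equiv 0$: strictly, the short-range Gaussians decay exponentially but do not vanish identically beyond their nominal support, so on $\Gamma$ one picks up an $O(N\delta)$ residual, where $\delta$ is the threshold used in criterion~(\ref{eqn:Split_crit_L2}) to define the effective support radius. Closing the argument therefore requires showing that this residual, together with its normal derivative, can be absorbed into the overall approximation error of the RS tensor format by tuning $R_l$ appropriately (i.e., refining $\delta \mapsto \delta/N$), which only affects the constants in Theorem~\ref{thm:Rank_LongRange} logarithmically and leaves the $N$-independence of the interface-assembly cost intact.
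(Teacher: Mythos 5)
Your proof follows the same route as the paper's brief justification (which appears only as informal prose preceding the Proposition, not as a displayed proof), but you carry it out more carefully and in the process fill a genuine gap in the paper's reasoning.

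The paper's one-sentence argument for the first claim is that "the physical distance between the atomic centers $\dots$ is bounded from below by $\sigma$, while the effective support of the localized parts $\dots$ can be chosen as the half of $\sigma$." As you correctly observe, the separation between the $x_\nu$'s alone only ensures that the short-range clusters ${\cal W}_\nu(\widetilde{\bf P}_{R_s})$ have pairwise (nearly) disjoint supports; it says nothing about their distance to the interface. The claim that $U_s$ and $\partial U_s/\partial n$ vanish (up to threshold) on $\Gamma$ requires the additional geometric hypothesis that every atomic center lies at distance at least $\sigma/2$ from $\Gamma=\partial\Omega_m$ — a natural assumption for van der Waals or solvent-accessible surfaces, but one the paper never states. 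You make this explicit and then trace it cleanly through the decomposition $u^s = U^s + u^h$, showing that both the Dirichlet data for $u^h$ and the Neumann jump $[\epsilon\,\partial u^r/\partial n]$ on $\Gamma$ are functions of $U_l$ alone.

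Your closing caveat is also well placed: the short-range Gaussians decay exponentially but are not compactly supported, so "$U_s|_\Gamma\equiv 0$" holds only up to the $\delta$-threshold of criterion~(\ref{eqn:Split_crit_L2}), and summing over $N$ sources produces an $O(N\delta)$ residual. Absorbing this by refining $\delta\mapsto\delta/N$ only enters $R_l$ through the $\log^{3/2}|\log(\varepsilon/N)|$ factor in Theorem~\ref{thm:Rank_LongRange}, so the asserted $N$-independence of the interface-assembly cost should really be read as "essentially independent" in the same doubly-logarithmic sense. This matches the spirit of the paper's claim but states its limitations more honestly than the original text does.
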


Finally, we notice the important characterization of the protein molecule 
given by the electrostatic solvation energy \cite{Holst:2008}, which is the difference 
between the electrostatic free energy in the solvated state (described by the PBE) 
and the electrostatic free energy in the absence of solvent, i.e. $E_N$. 
Now the electrostatic solvation energy can be computed in the framework 
of the new regularized formulation (\ref{eqn:Regul_PBE}) of PBE.

\section{Conclusions}\label{sec:Conclusions}

In this paper, we introduce and analyze the new range-separated canonical and Tucker tensor formats 
for the grid representation of the long-range interaction potentials in multiparticle systems.
One can distinguish the RS tensors from the conventional rank-structured representations
due to their intrinsic features, originating from tensor approximation to 
multivariate functions with multiple singularities, in particular, generated by 
a weighted sum of the classical Green's kernels.

We show that the tensor approximation to the particle  interaction potentials
allows  to split their long- and short-range parts providing their efficient  
representation and numerical treatment in the low-rank RS tensor formats.
Indeed, the long-range part in the potential sum can be represented on a grid
by the low-rank canonical/Tucker tensor globally in the computational box, 
while its short-range component is parametrized by a reference tensor of local support
and a list of particle coordinates and charges.
In particular, we prove that the Tucker rank of the long-range part in 
$N$-particle potential depends only logarithmically on the number 
of particles in the system.

The RS formats prove  to be well suited for summation of the electrostatic
potentials in large many-particle systems in a box (e.g. proteins or large molecular clusters),  
providing the low-parametric tensor representation
of the total potential at any point of the fine 3D $n \times n \times n$ Cartesian grid.
For the computer realization of the RS tensor decomposition, 
a canonical-to-Tucker rank reduction algorithm is applied resulting in 
the $O(n \log N)$ grid representation of the long-range part in the many-particle potential.
Notice that the existing approaches are limited by the $O(n^3)$
complexity contrary to the almost linear scaling in univariate mesh size $n$ for 
the RS tensor format.

Numerical tests confirm the theoretical rank estimates and the asymptotically optimal 
complexity bound $O(N)$.  
In particular, the  electrostatic  potential for $N$-particle systems 
(up to several thousands of atoms) is computed in Matlab with controllable accuracy, 
resulting in the RS tensor living on large 3D grids of size up to $n^3=10^{12}$.

As examples of possible applications, we describe 
the tensor based representation to the electrostatic free energy 
of a protein in the absence of solvent and illustrate the efficiency by numerical tests.
We observe that for moderate accuracy requirements,  
the application of the RS canonical/Tucker formats exhibits very mild limitations 
on the system size. This situation may occur in 
the problems of protein docking and classification of biomolecules.
Furthermore, we demonstrate how the RS tensor decomposition allows to easily compute
the gradients and forces for multi-particle interaction potentials.
The benefits of the RS tensors in multi-dimensional scattered data
modeling are also discussed.
Finally, we propose the enhanced regularized formulation 
for the Poisson-Boltzmann equation that is based on pre-computing of only the long-range part in 
the electrostatic potential of protein in free space $\mathbb{R}^3$.

The presented analysis of the RS tensor formats indicates their potential
benefits  in various applications related to
modeling of many particle systems, and addresses a number of new interesting
theoretical and algorithmic questions on rank-structured tensor approximation 
of multivariate functions with generally located point singularities.

Finally, we notice that the RS tensor approach can be easily extended to the case of 
multi-dimensional scattered data in $\mathbb{R}^d$ for $d>3$.

\section{Appendix: CP-to-Tucker tensor transform by reduced HOSVD}\label{app:C2T_HOSVD} 

For the reader's convenience, in this section we recall the main ingredients of the rank-reduction 
approach for canonical tensors with large initial rank.
The multigrid accelerated canonical-to-Tucker tensor transform combined with the 
Tucker-to-canonical scheme was invented in \cite{khor-ml-2009}  
for the rank optimization of 3D function related canonical tensors given as the 
large sum of rank-$1$ components.
It was proved to be a useful approach to many  problems arising 
in grid-based computations in quantum chemistry.

The approximation results in \cite{HaKhtens:04I,khor-rstruct-2006} indicate that 
the tensor representation of regular multidimensional functions with point singularities
can lead to their accurate low-rank decomposition. 
However, to compute the Tucker (or canonical) decomposition in a traditional way
as described in \cite{DMV-SIAM2:00}, requires information on all entries of a tensor. 
The so-called higher-order SVD (HOSVD) method \cite{DMV-SIAM2:00} amounts to $O(n^{d+1})$ 
operations and memory cost.

The numerical complexity can be reduced dramatically if the initial tensor is given 
in the canonical format, presumably with rather large rank parameter $R$. 
In general, it might be storage/time consuming to generate, first, a full tensor from 
the canonical one, and then apply the HOSVD based Tucker decomposition to it. 
The canonical-to-Tucker decomposition eliminates this step,
and turns the Tucker decomposition into the alternating least square (ALS)
problem with easily precomputed initial orthogonal Tucker subspaces.

Specifically, the canonical-to-Tucker decomposition eliminates finding the initial 
guess for the standard Tucker ALS iteration (see \cite{DMV-SIAM2:00,KoldaB:07}) 
by using the SVD of the side matrices in the canonical tensor representation.
This approach, called the reduced HOSVD (RHOSVD), was introduced in \cite{KhKh:06,khor-ml-2009}.
The multigrid version of the RHOSVD allows to reduce the dominating cost in 3D case to
$O(R\,n)$ \cite{VeKh_Diss:10}.

Without loss of generality, we consider the case $d=3$.
To define the reduced rank-$\bf r$ HOSVD type
Tucker approximation to the tensor in (\ref{eqn:CP_form}),
we set $n_\ell=n$ and suppose for definiteness that $n\leq R$. Now the SVD of the
side-matrix $U^{(\ell)}$ is given by
\begin{equation}\label{eqn:SVD_SideMatr}
U^{(\ell)}={Z}^{(\ell)} D_\ell {V^{(\ell)}}^T=
\sum\limits_{k=1}^n \sigma_{\ell,k} {\bf z}_k^{(\ell)}\, {{\bf v}_k^{(\ell)}}^T,\quad
{\bf z}_k^{(\ell)}\in \mathbb{R}^{n},\; {\bf v}_k^{(\ell)} \in \mathbb{R}^{R},
\end{equation}
with the orthogonal matrices ${ Z}^{(\ell)}=[{\bf z}_1^{(\ell)},...,{\bf z}_n^{(\ell)}]$, and
${V}^{(\ell)}=[{\bf v}_1^{(\ell)},...,{\bf v}_n^{(\ell)}]$, $\ell=1,2,3$.
Given the rank parameter ${\bf r}=(r_1,r_2,r_3)$ with $r_1,r_2,r_3 <n $, we introduce 
the truncated SVD of the side-matrix
$$
U^{(\ell)}\mapsto U^{(\ell)}_0 =
\sum\limits_{k=1}^{r_\ell} \sigma_{\ell,k} {\bf z}_k^{(\ell)}\, {{\bf v}_k^{(\ell)}}^T=
{Z}_0^{(\ell)} D_{\ell,0} {V_0^{(\ell)}}^T, \quad \ell=1,2,3,
$$
where
$
D_{\ell,0}=\mbox{diag} \{\sigma_{\ell,1},\sigma_{\ell,2},...,\sigma_{\ell,r_\ell}\}
$
and ${Z}_0^{(\ell)}\in \mathbb{R}^{n\times r_\ell}$,
${V_0}^{(\ell)}\in \mathbb{R}^{R\times r_\ell} $, represent the orthogonal
factors being the respective sub-matrices in the SVD factors of ${U}^{(\ell)}$.
\begin{definition}\label{def:RHOSVD}
(\cite{khor-ml-2009}). 
The RHOSVD approximation of ${\bf U}$ in (\ref{eqn:CP_form_ContrpTuck}), further 
called ${\bf U}_{({\bf r})}^0$,
is defined as the rank-${\bf r}$ Tucker tensor obtained by the projection  of ${\bf U}$ 
onto the 
orthogonal matrices of the dominating singular vectors in 
$Z_0^{(\ell)}\in \mathbb{R}^{n\times r_\ell}$, ($\ell=1,2,3$),
\begin{equation}\label{eqn:CP_form_RHOSVD}
{\bf U} \mapsto {\bf U}_{({\bf r})}^0=\boldsymbol{\xi} \times_1 {U}^{(1)}_0\times_2 {U}^{(2)}_0  \times_d {U}^{(3)}_0. 
\end{equation}
\end{definition}

For the ease of presentation, we further
sketch the algorithm \emph{Canonical-to-Tucker} for the 3D tensor. This includes the following basic steps:

\emph{ Input data}: Side matrices $U^{(\ell)}=[{\bf u}_1^{(\ell)}\ldots {\bf u}_R^{(\ell)}] 
\in \mathbb{R}^{n_\ell \times R}$, $\ell=1, 2,3$,
composed of the vectors ${\bf u}_k^{(\ell)}\in \mathbb{R}^{n_\ell}$, $k=1,\ldots, R$, 
see (\ref{eqn:CP_form}); maximal Tucker-rank parameter ${\bf r}$; maximal  number of  
 the ALS iterations $m_{max}$ (usually a small number).

  {\bf (I)}  Compute the singular value decomposition (SVD) of the side matrices:
\[
 U^{(\ell)} = Z^{(\ell)} D^{(\ell)} V^{(\ell)}, \quad \ell=1, 2, 3.
\]
Discard the singular vectors in $ Z^{(\ell)}$ and the respective singular values 
up to given rank threshold, yielding the small orthogonal matrices 
$Z^{(\ell)}_{0} \in \mathbb{R}^{n_\ell \times r_\ell}$, and diagonal matrices
$D_{\ell,0}\in \mathbb{R}^{r_\ell \times r_\ell}$, $\ell=1, 2, 3$.
  
 {\bf (II)} Project the side matrices $U^{(\ell)}$ onto the orthogonal basis set 
 defined by $Z^{(\ell)}_{0}$
\begin{equation}\label{eq:alg1}
U^{(\ell)} \mapsto  \widetilde{U}^{(\ell)} = (Z^{(\ell)}_{0})^T U^{(\ell)}
= D_{\ell,0} {V_0^{(\ell)}}^T, \quad
 \widetilde{U}^{(\ell)}\in \mathbb{R}^{r_\ell \times R}, \quad \ell=1, 2,3.
\end{equation} 
and compute ${\bf U}_{({\bf r})}^0$ as in (\ref{eqn:CP_form_RHOSVD}).

  {\bf (III)} (Find dominating subspaces). Implement the following ALS iteration {\bf (IV)}
  $m_{max}$ times at most, starting from the RHOSVD initial guess ${\bf U}_{({\bf r})}^0$.
  

{\bf (IV)} Perform ALS iteration for  $\ell= 1, 2,3$:

$\diamond$  For $\ell=1$ : construct the partially projected image of the full tensor,
\begin{equation}\label{eq:alg2}
{\bf U} \mapsto
  \widetilde{\bf U}_1 = {\sum}_{k =1}^{R} c_k  {\bf u}_k^{(1)}  
\otimes \widetilde{\bf u}_k^{(2)} \otimes \widetilde{\bf u}_k^{(3)}, \quad  c_k \in \mathbb{R}.
\end{equation}
Here ${\bf u}_k^{(1)} \in \mathbb{R}^{n_1}$ is in physical space for mode $\ell=1$,
while  $\widetilde{\bf u}_k^{(2)} \in \mathbb{R}^{r_2}$ and $\widetilde{\bf u}_k^{(3)} \in \mathbb{R}^{r_3}$,
the column vectors of $\widetilde{U}^{(2)}$ and $\widetilde{U}^{(3)}$, respectively, leave 
the small coefficients index sets.

$\diamond$ Reshape the tensor $\widetilde{\bf U}_1\in \mathbb{R}^{n_1 \times r_2 \times r_3}$ into
a matrix $M_{U_1} \in \mathbb{R}^{n_1 \times (r_2  r_3)}$, representing the span of 
the optimized subset 
of mode-$1$ columns of the partially projected tensor $\widetilde{\bf U}_1$.
 Compute the SVD of the matrix $M_{U_1}$:
\[
 M_{U_1} = Z^{(1)} S^{(1)} V^{(1)},
\]
and truncate the set of singular vectors in 
$Z^{(1)} \mapsto \widetilde{Z}^{(1)}\in \mathbb{R}^{n_1 \times r_1}$,
according to the restriction on the mode-$1$ Tucker rank, $r_1$.

$\diamond$ Update the current approximation to the mode-$1$ dominating subspace, 
$Z^{(1)}_{r_1} \mapsto \widetilde{Z}^{(1)}$.

$\diamond$ Implement the single step of the ALS iteration for mode $\ell=2$ and $\ell=3$.

$\diamond$ End of the complete ALS iteration sweep.

$\diamond$ Repeat the complete ALS iteration $m_{max}$ times to obtain the optimized 
Tucker orthogonal side matrices $\widetilde{Z}^{(1)}$, $\widetilde{Z}^{(2)}$, $\widetilde{Z}^{(3)}$,
and final projected image $\widetilde{\bf U}_3$.
 
{\bf (V)}  Project the final iterated tensor $\widetilde{\bf U}_3$ in (\ref{eq:alg2}) 
using the resultant basis set in $\widetilde{Z}^{(3)}$ to obtain the core tensor, 
$\boldsymbol{\beta} \in \mathbb{R}^{r_1\times r_2 \times r_3} $. 
  
\emph{Output data}: The Tucker core tensor  $\boldsymbol{\beta}$ and the orthogonal
side matrices $\widetilde{Z}^{(\ell)}$, $\ell=1, 2,3$.

\vspace{0.1cm}
In such a way it is possible to obtain a Tucker decomposition of a canonical tensor with large
mode-size and with rather large ranks, as it may be the case for biomolecules or 
the electron densities in electronic structure calculations. 
The multigrid version of the Canonical-to-Tucker algorithm allows to avoid the expensive SVD 
calculation in Step {\bf (I)} thus reducing the cost of first two steps to $O(R\,n)$.

The \emph{Canonical-to-Tucker} algorithm can be easily modified to 
use an $\varepsilon$-truncation stopping criterion.
Notice that the 
maximal canonical rank\footnote{Further reduction of the canonical rank in the small-size core tensor 
$\boldsymbol{\beta}$ can be implemented by using the ALS-canonical iterative scheme, 
described e.g. in \cite{KoldaB:07}.} of the core 
tensor $\boldsymbol{\beta}$ does not exceed $r^2$ in the case of $r=r_\ell$,
see \cite{khor-ml-2009,VeKh_Diss:10}.

In our particular application we use the multigrid accelerated C2T algorithm, which 
eliminates the singular value decomposition of the side matrices $U^{(\ell)}$, $\ell=1,\, 2,\, 3$
of size $n\times R$ having the cost $O(n^2 R)$ for $n<R $ or $O(R^2 n)$ for $n>R $ 
and thus reduces the 
numerical costs to $O(R\,n)$ \cite{khor-ml-2009,VeKh_Diss:10}. 

\begin{footnotesize}

\bibliographystyle{abbrv}
\bibliography{BSE_Fock_Sums1.bib}

\begin{thebibliography}{10}

\bibitem{Beck:Rev-2000}
T.~Beck.
\newblock Real-space mesh techniques in density-functional theory.
\newblock {\em Rev. Mod. Phys.}, 72:1041--1080, 2000.

\bibitem{BDKK_BSE:16}
P.~Benner, S.~Dolgov, V.~Khoromskaia, and B.~N. Khoromskij.
\newblock Fast iterative solution of the bethe-salpeter eigenvalue problem
  using low-rank and {Q}{T}{T} tensor approximation.
\newblock {\em arXiv:1602.02646}, 2016.

\bibitem{BeDoOnSt:16}
P.~Benner, S.~Dolgov, A.~Onwunta, and M.~Stoll.
\newblock Low-rank solvers for unsteady {S}tokes-{B}rinkman optimal control
  problem with random data.
\newblock {\em Comp. Meth. Appl. Mech. Eng.}, 304(1):26–54, 2016.

\bibitem{BeGuWi:15}
P.~Benner, S.~Gugercin, and K.~Willcox.
\newblock A survey of projection-based model order reduction methods for
  parametrized dynamical systems.
\newblock {\em SIAM Review}, 57:483--531, 2015.

\bibitem{BeKhKh_BSE:15}
P.~Benner, V.~Khoromskaia, and B.~N. Khoromskij.
\newblock A reduced basis approach for calculation of the {B}ethe-{S}alpeter
  excitation energies using low-rank tensor factorizations.
\newblock {\em Mol. Physics}, 114(7-8):1148--1161, 2016.

\bibitem{BeMeSo-book-2004}
P.~Benner, V.~L. Mehrmann, and D.~C. Sorensen.
\newblock {\em Dimension reduction of large-scale systems}, volume~45.
\newblock Springer Verlag, 2005.

\bibitem{BeHaKh:08}
C.~Bertoglio and B.~N. Khoromskij.
\newblock Low-rank quadrature-based tensor approximation of the {Galerkin}
  projected {Newton/Yukawa} kernels.
\newblock {\em Comp. Phys. Comm.}, 183(4):904--912, 2012.

\bibitem{BeGaMo:09}
G.~Beylkin, J.~Garcke, and M.~J. Mohlenkamp.
\newblock Multivariate regression and machine learning with sums of separable
  functions.
\newblock {\em SIAM J Sci. Comp.}, 31(3):1840--1857, 2009.

\bibitem{Boys:56}
S.~F. Boys, G.~B. Cook, C.~M. Reeves, and I.~Shavitt.
\newblock Automatic fundamental calculations of molecular structure.
\newblock {\em Nature}, 178:1207--1209, 1956.

\bibitem{Braess:BookApTh:86}
D.~Braess.
\newblock {\em Nonlinear approximation theory}.
\newblock Springer-Verlag, 1986.

\bibitem{Buhmann-book-2003}
M.~D. Buhmann.
\newblock {\em Radial Basis Functions}.
\newblock Cambridge University Press, 2003.

\bibitem{CaEhLe:14}
E.~Cances, V.~Ehrlacher, and T.~Lelievre.
\newblock Greedy algorithms for high-dimensional eigenvalue problems.
\newblock {\em J. Constr. Approx.}, 40:387--423, 2014.

\bibitem{CaMaSt:13}
E.~Cances, Y.~Maday, and B.~Stamm.
\newblock Domain decomposition for implicit solvation models.
\newblock {\em J. Chem. Phys.}, 139:054111, 2013.

\bibitem{DaDeGrSu:15}
W.~Dahmen, R.~Devore, L.~Grasedyck, and A.~S\"uli.
\newblock Tensor-sparsity of solutions to high-dimensional elliptic partial
  differential equations.
\newblock {\em Found. Comput. Math.}, DOI 10.1007/s10208-015-9265-9:1--62,
  2015.

\bibitem{DYP:93}
T.~Darten, D.~York, and L.~Pedersen.
\newblock Particle mesh {E}wald: An {O}({N} log {N}) method for {E}wald sums in
  large systems.
\newblock {\em J. Chem. Phys.}, 98:10089--10091, 1993.

\bibitem{DMV-SIAM2:00}
L.~De~Lathauwer, B.~De~Moor, and J.~Vandewalle.
\newblock A multilinear singular value decomposition.
\newblock {\em SIAM J. Matrix Anal. Appl.}, 21:1253--1278, 2000.

\bibitem{DeSt:12}
H.~De~Sterck.
\newblock An adaptive algebraic multigrid algorithm for low-rank canonical
  tensor decomposition.
\newblock {\em SIAM J. Sci. Comput.}, 34:A1351--A1379, 2012.

\bibitem{DeStMi:13}
H.~De~Sterck and K.~Miller.
\newblock An adaptive algebraic multigrid algorithm for low-rank canonical
  tensor decomposition.
\newblock {\em SIAM J. Sci. Comput.}, 35:B1--B24, 2013.

\bibitem{DesHolm:98}
M.~Deserno and C.~Holm.
\newblock How to mesh up {E}wald sums. {I}. {A} theoretical and numerical
  comparison of various particle mesh routines.
\newblock {\em J. Chem. Phys.}, 109(18):7678--7693, 1998.

\bibitem{DesHolmII:98}
M.~Deserno and C.~Holm.
\newblock How to mesh up {E}wald sums. {I}{I}. {A} theoretical and numerical
  comparison of various particle mesh routines.
\newblock {\em J. Chem. Phys.}, 109(18):7694--7701, 1998.

\bibitem{DoPeSaSt:16}
S.~Dolgov, J.~W. Pearson, D.~Savostyanov, and M.~Stoll.
\newblock Fast tensor product solvers for optimization problems with fractional
  differential equations as constraints.
\newblock {\em Appl. Math. Comp.}, 273:604--623, 2016.

\bibitem{DKhOs-parabolic1-2012}
S.~V. Dolgov, B.~N. Khoromskij, and I.~V. Oseledets.
\newblock Fast solution of multi-dimensional parabolic problems in the tensor
  train/quantized tensor train--format with initial application to the
  {Fokker}-{Planck} equation.
\newblock {\em SIAM J. Sci. Comp.}, 34(6):A3016--A3038, 2012.

\bibitem{Ewald:27}
P.~Ewald.
\newblock Die berechnung optische und elektrostatischer gitterpotentiale.
\newblock {\em Annalen der Physik}, 369(3):253--287, 1921.

\bibitem{FoFl:15}
B.~Fornberg and N.~Flyer.
\newblock Solving {P}{D}{E}s with radial basis functions.
\newblock {\em Acta Numerica}, 24:215--258, 2005.

\bibitem{Fornberg-book-2015}
B.~Fornberg and N.~Flyer.
\newblock {\em A Primer on Radial Basis Functions with Applications to the
  Geosciences}.
\newblock SIAM, 2015.

\bibitem{GraKresTo:13}
L.~Grasedyck, D.~Kressner, and C.~Tobler.
\newblock A literature survey of low-rank tensor approximation techniques.
\newblock {\em arXiv:1302.7121v1}, 2013.

\bibitem{RochGreen:87}
L.~Greengard and V.~Rochlin.
\newblock A fast algorithm for particle simulations.
\newblock {\em J. Comp. Phys.}, 73:325, 1987.

\bibitem{HaKhtens:04I}
W.~Hackbusch and B.~Khoromskij.
\newblock Low-rank {K}ronecker product approximation to multi-dimensional
  nonlocal operators. part {I}. {S}eparable approximation of multi-variate
  functions.
\newblock {\em Computing}, 76:177--202, 2006.

\bibitem{HaSch:14}
W.~Hackbusch and R.~Schneider.
\newblock Tensor spaces and hierarchical tensor representations.
\newblock {\em In: Lecture Notes in Computer Science and Engineering, S.
  Dahlke, W. Dahmen, et al. eds., Springer}, pages 237--262, 2014.

\bibitem{HeRoSt-book-2016}
J.~S. Hesthaven, G.~Rozza, and B.~Stamm.
\newblock {\em Certified Reduced Basis Methods for Parametrized Partial
  Differential Equations}.
\newblock Springer, Berlin, 2016.

\bibitem{HoEa:88}
R.~W. Hockney and J.~W. Eastwood.
\newblock {\em Computer Simulation Using Particles}.
\newblock IOP, Bristol, 1988.

\bibitem{HuMcCam:1999}
P.~H. H{\"u}nenberger and J.~A. McCammon.
\newblock Effect of artificial periodicity in simulations of biomolecules under
  {E}wald boundary conditions: a continuum electrostatics study.
\newblock {\em Biophys. Chemistry}, 78:69--88, 1999.

\bibitem{Iske-book-2004}
A.~Iske.
\newblock {\em Multiresolution methods in scattered data modelling}.
\newblock Springer, 2004.

\bibitem{VeKh_Diss:10}
V.~Khoromskaia.
\newblock {\em Numerical Solution of the Hartree-Fock Equation by Multilevel
  Tensor-structured methods}.
\newblock PhD thesis, Technical University of Berlin, 2010.

\bibitem{KhKh:06}
V.~Khoromskaia and B.~N. Khoromskij.
\newblock Low rank {T}ucker tensor approximation to the classical potentials.
\newblock {\em Centr. Europ. J. Math.}, 5(3):1--28, 2007.

\bibitem{VeBoKh:Ewald:14}
V.~Khoromskaia and B.~N. Khoromskij.
\newblock Grid-based lattice summation of electrostatic potentials by assembled
  rank-structured tensor approximation.
\newblock {\em Comp. Phys. Comm.}, 185(12), 2014.

\bibitem{VeKhorTromsoe:15}
V.~Khoromskaia and B.~N. Khoromskij.
\newblock Tensor numerical methods in quantum chemistry: from {Hartree-Fock} to
  excitation energies.
\newblock {\em Phys. Chem. Chem. Phys.}, 17:31491 -- 31509, 2015.

\bibitem{VeKhor_NLLA:15}
V.~Khoromskaia and B.~N. Khoromskij.
\newblock Fast tensor method for summation of long-range potentials on 3{D}
  lattices with defects.
\newblock {\em Numer. Lin. Alg. Appl.}, 23:249--271, 2016.

\bibitem{khor-rstruct-2006}
B.~N. Khoromskij.
\newblock Structured {rank-$(r_1,\ldots,r_d)$} decomposition of
  function-related operators in {$\mathbb{R}^d$}.
\newblock {\em Comp. Meth. Appl. Math}, 6(2):194--220, 2006.

\bibitem{KhQuant:09}
B.~N. Khoromskij.
\newblock ${O}(d\log {N})$-quantics approximation of ${N}$-$d$ tensors in
  high-dimensional numerical modeling.
\newblock {\em J. Constr. Approx.}, 34(2):257--289, 2011.

\bibitem{KhorSurv:10}
B.~N. Khoromskij.
\newblock Tensors-structured numerical methods in scientific computing: Survey
  on recent advances.
\newblock {\em Chemometr. Intell. Lab. Syst.}, 110:1--19, 2012.

\bibitem{khor-ml-2009}
B.~N. Khoromskij and V.~Khoromskaia.
\newblock Multigrid accelerated tensor approximation of function related
  multidimensional arrays.
\newblock {\em SIAM J. Sci. Comp.}, 31(4):3002--3026, 2009.

\bibitem{Kolda:01}
T.~Kolda.
\newblock Orthogonal tensor decompositions.
\newblock {\em SIAM J. Matrix Anal. Appl.}, 23:243--255, 2001.

\bibitem{KoldaB:07}
T.~Kolda and B.~W. Bader.
\newblock Tensor decompositions and applications.
\newblock {\em SIAM Review}, 51(3):455--500, 2009.

\bibitem{KrStUs:14}
D.~Kressner, M.~Steinlechner, and A.~Uschmajew.
\newblock Low-rank tensor methods with subspace correction for symmetric
  eigenvalue problems.
\newblock {\em SIAM J Sci. Comp.}, 36(5):A2346 --A2368, 2014.

\bibitem{KuScuser:04}
K.~N. Kudin and G.~E. Scuseria.
\newblock Revisiting infinite lattice sums with the periodic fast multipole
  method.
\newblock {\em J. Chem. Phys.}, 121:2886--2890, 2004.

\bibitem{KwHeFeStBe:16}
C.~Kweyu, M.~Hess, L.~Feng, M.~Stein, and P.~Benner.
\newblock Reduced basis method for {P}oisson-{B}oltzmann equation.
\newblock {\em In: Proceedings of VII European Congress on Computational
  Methods in Applied Sciences and Engineering}, 2016.

\bibitem{LinTorn2:12}
D.~Lindbo and A.-K. Tornberg.
\newblock Fast and spectrally accurate {E}wald summation for 2-periodic
  electrostatic systems.
\newblock {\em J. Chem. Phys.}, 136:164111, 2012.

\bibitem{LiStCaMaMe:13}
F.~Lipparini, B.~Stamm, E.~Cances, Y.~Maday, and B.~Mennucci.
\newblock Domain decomposition for implicit solvation models.
\newblock {\em J. Chem. Theor. Comp.}, 9:3637--3648, 2013.

\bibitem{Holst:2008}
B.~Z. Lu, Y.~C. Zhou, M.~J. Holst, and J.~A. McCammon.
\newblock Recent progress in numerical methods for {P}oisson-{B}oltzmann
  equation in biophysical applications.
\newblock {\em Commun. Comp. Phys.}, 3(5):973--1009, 2008.

\bibitem{MaLiPaRoZa:12}
H.~G. Matthies, A.~Litvinenko, O.~Pajonk, B.~V. Rosic, and E.~Zander.
\newblock Parametric and uncertainty computations with tensor product
  representations.
\newblock {\em In: Uncertainty Quantification in Scientific Computing, Springer
  Berlin Heidelberg}, pages 139--150, 2012.

\bibitem{osel-2d2d-2010}
I.~V. Oseledets.
\newblock Approximation of $2^d \times2^d$ matrices using tensor decomposition.
\newblock {\em SIAM J. Matrix Anal. Appl.}, 31(4):2130--2145, 2010.

\bibitem{Osel_TT:11}
I.~V. Oseledets.
\newblock Tensor-train decomposition.
\newblock {\em SIAM J. Sci. Comp.}, 33(5):2295--2317, 2011.

\bibitem{PolGlos:96}
E.~L. Pollock and J.~Glosli.
\newblock Comments on {P}(3)m, {F}{M}{M} and the {E}wald method for large
  periodic {C}oulombic systems.
\newblock {\em Comp. Phys. Comm.}, 95:93--110, 1996.

\bibitem{QuMaNe:15}
A.~Quarteroni, A.~Manzoni, and F.~Negri.
\newblock {\em Reduced basis methods for parametrized partial differential
  equations: an introduction}.
\newblock Springer International Publishing, 2015.

\bibitem{RaOs:15}
M.~Rakhuba and I.~Oseledets.
\newblock Grid-based electronic structure calculations: the tensor
  decomposition approach.
\newblock {\em arXiv:1508.07632}, 2015.

\bibitem{Stein:2010}
M.~Stein, R.~R. Gabdoulline, and R.~C. Wade.
\newblock Cross-species analysis of the glycoliticmpathway by comparison of
  molecular interaction fields.
\newblock {\em Molecular Biosystems}, 6:162--174, 2010.

\bibitem{Stenger}
F.~Stenger.
\newblock {\em Numerical methods based on Sinc and analytic functions}.
\newblock Springer-Verlag, 1993.

\bibitem{ToCoSa:04}
J.~Toulouse, F.~Colonna, and A.~Savin.
\newblock Long-range--short-range separation of the electron-electron
  interaction in density-functional theory.
\newblock {\em Phys. Rev. A}, 70:062505, 2004.

\bibitem{ZuAmir:15}
M.~Zuzovski, A.~Boag, and A.~Natan.
\newblock An auxiliary grid method for the calculation of electrostatic terms
  in density functional theory on a real-space grid.
\newblock {\em Phys. Chem. Chem. Phys.}, 17:31550--31557, 2015.

\end{thebibliography}
\end{footnotesize}

\end{document}